%% file: arXiv_submission.tex
\documentclass[a4paper,reqno]{amsart}

\pdfoutput=1

\usepackage{amscd,amsthm,amsmath,amssymb,mathtools}
\usepackage{upgreek,bm}
\usepackage{verbatim}
\usepackage{color}
\usepackage{hyperref}
\usepackage{url}
\usepackage{enumerate,enumitem}

\usepackage{framed}

\usepackage{graphicx}
\usepackage{subcaption}
\usepackage{epstopdf,epsfig}
\usepackage{curve2e}

\usepackage{array}

\usepackage{algorithm}
\usepackage{algorithmic}

\usepackage{tikz}
\usetikzlibrary{calc,positioning,arrows.meta}
\tikzstyle{every picture}+=[remember picture,overlay]
\usepackage{xcolor}
\usepackage{pgfplots}
\pgfplotsset{compat=1.18}
\usepackage{tikz}
\usetikzlibrary{decorations.pathmorphing}

\usepackage[numbers,sort&compress]{natbib}

\usepackage{setspace}
\usepackage[nameinlink,noabbrev]{cleveref}
\crefname{subsection}{subsection}{subsections}
\Crefname{subsection}{Subsection}{Subsections}

\theoremstyle{plain}

\newtheorem{theorem}{Theorem}[section]

\newtheorem{corollary}[theorem]{Corollary}

\theoremstyle{definition}
\newtheorem{definition}[theorem]{Definition}
\newtheorem{remark}[theorem]{Remark}

\numberwithin{equation}{section}

\usepackage{geometry}

\input{Mathcommands}

\newcommand{\bseta}{{\boldsymbol{\eta}}}
\newcommand{\bsGammas}{{\boldsymbol{\Gamma}_{s+1}}}
\newcommand{\bsz}{{\boldsymbol{z}}}
\newcommand{\bsy}{{\boldsymbol{y}}}

\usepackage{xargs}

\begin{document}
\title{Dynamic output-feedback stabilization of uncertain linear dynamics via digital twins}
\author{Philipp A.~Guth$^{\tt1}$,  Karl Kunisch$^{\tt 1,
2}$, S\'ergio S.~Rodrigues$^{\tt3}$, and Jesper Schr\"oder$^{\tt1}$}
\thanks{
\vspace{-1em}\newline\noindent
{\sc MSC2020}: 93C40, 93B52, 49N10, 93B51.
\newline\noindent
{\sc Keywords}:  model parameter uncertainty, feedback adaptive control, stabilization, continuous data assimilation, adaptive observer design, digital twin.
\newline\noindent
$^{\tt1}$ Johann Radon Institute for Computational and Applied Mathematics,
  \"OAW, %
  Altenbergerstrasse~69, 4040~Linz, Austria.\newline\noindent
$^{\tt2}$   Institute of Mathematics and Scientific Computing,    Karl-Franzens University of Graz,	     	
Heinrichstrasse~36, 8010 Graz, Austria, and Johann Radon Institute for Computational and Applied Mathematics,
  \"OAW, %
  Altenbergerstrasse~69, 4040~Linz, Austria.\newline\noindent
  $^{\tt3}$ Departament of Mathematics,
FCT, NOVA University of Lisbon,
2829-516 Caparica,
Portugal
\newline\noindent
{\sc Emails}:
{\small\tt   philipp.guth@ricam.oeaw.ac.at,\quad karl.kunisch@uni-graz.at,\quad\\ \hspace*{3.4em}ssi.rodrigues@fct.unl.pt,\quad jesper.schroeder@ricam.oeaw.ac.at}
 }

\begin{abstract}
This work presents a digital twin framework for output-feedback stabilization and parameter identification in uncertain dynamical systems. A virtual model evolves in parallel with the physical process, assimilating measurement data in real time. By design, the digital twin reconstructs the system state and generates a stabilizing feedback, while model parameters are simultaneously inferred from data of the controlled dynamics using a Bayesian approach. Numerical results for the coupled physical-virtual dynamics demonstrate how digital twins can act jointly as observers, parameter estimators, and control agents, ensuring robust performance under uncertainty.
\end{abstract}

\maketitle

\pagestyle{myheadings} \thispagestyle{plain} \markboth{\sc P.A. Guth,  K. Kunisch, S.S. Rodrigues, J. Schr\"oder}
{\sc Dynamic output-feedback stabilization of uncertain linear dynamics}



\section{Introduction}
Digital twins (DT) have become a useful tool in  a multitude of 
applied sciences, including engineering, healthcare, supply chains, and environmental systems, for example. While important and practical  experience has been obtained and documented, the mathematical analysis  poses significant challenges to be overcome. In this research we take a step in this direction. To commence, let us specify the notion of DT that we shall follow. 

\smallskip
\begin{center}\parbox{0.9\textwidth}{%
  \itshape
  A DT is a coupled system consisting  of  a set of virtual information constructs, as for instance an underdetermined dynamical system, that describes the structure and behavior of the physical system, 
from which it is updated by data as time progresses. The DT has the capability to influence the physical system to achieve predefined objectives. A  bidirectional interaction between the virtual twin  and the physical twin  is central to the digital twin.
}
\smallskip 

\end{center}
This is a slightly modified description of a DT guided by the definition given by the National Academies of Science, Engineering, and Medicine \cite{NASEM2024}, which itself is a slightly altered and extended form of an earlier definition by the American Institute of Aeronautics and Astronautics \cite{aiaa2020digitaltwin}. For an interesting introduction to mathematical aspects of DTs we also refer to \cite{antil2024}.

For the successful analysis of DTs, many techniques of mathematical systems theory come into play. These include parameter estimation, inverse problems, uncertainty quantification, feedback and possibly optimal feedback control, state estimation, and reduced order modeling. Due to the coupling between the virtual and the physical twin, these methods must be employed in an interconnected manner, which implies a need for the development of new concepts. We attempt to go a step in this direction. There are many other important applied and numerical  challenges which arise in the context of DTs, including large data sets, the curse of dimensionality, and multiscale aspects. These topics are not part of the present work.

In the development of digital twins, the nature of the underlying problem typically dictates whether rich data sets or highly-accurate physical models are available. Rarely, both are available simultaneously. While data-driven approaches leverage large datasets to infer system behavior, model-based strategies capitalize on detailed knowledge of the underlying physics and therefore remain reliable even when measurements are sparse, noisy, or only indirectly related to quantities of interest. In this work, we operate on the model-driven end of the spectrum, where the governing equations provide the structural backbone of the digital twin and the available measurements are used to calibrate the model within a holistic framework for prediction and control that allows for uncertainty quantification  and ultimately leads to guaranteed stabilization of the physical system.

In this manuscript, the main purpose of the design of the DT is oriented towards the construction of controls stabilizing the physical system. In the context of uncertain dynamics, this  requires the construction of the virtual twin to take into account the nontrivial task of online identification of appropriate estimates for unknown parameters of the dynamics. Since the data obtained from the physical twin only partially describe its state,  state reconstruction has to be taken into consideration as well. 

The literature on controlling dynamical systems under parametric uncertainty is as rich as it is diverse. The following overview does not claim completeness and is to be understood as a starting point for the interested reader. A broad overview from a system theoretic standpoint at the turn of the century is presented in \cite{AstWit95}. This includes, in particular, \textit{self-tuning regulators} \cite[Ch.~3 and 4]{AstWit95}, based on the principle of estimating uncertain parameters and adjusting the controller as if these parameters were known. 
This approach, which is referred to as the \textit{certainty equivalence principle}, does not take into account the uncertainties of the estimated parameters. For more recent accounts of adaptive control, we refer to \cite{AnnaswamyFradkov2021, LevanonyCaines}, for example. A conceptually different strategy is given by a stochastic formulation where controls are chosen according to a cost functional including a control goal and the parameter uncertainty. This leads to the concept of \textit{dual control} \cite[Ch.~7]{AstWit95}, where optimal controls strike a balance between achieving the desired behavior and properly identifying parameters. We refer to \cite{Mes18} for a survey on dual control and to \cite{VenKoBeAl25}
proposing an explicit construction of controls sequentially exploring parameters and controlling the system towards a desired behavior.
In \cite{DeyDhBh23} the authors present an MPC-based strategy combining control, state estimation, and parameter estimation.
Another approach is given by \textit{gain scheduling} \cite[Ch.~9]{AstWit95}, where controllers are adapted according to measurements of the operating environment, see also \cite{VenKoBeAl20} for a more recent reference relying on this concept. 
Finally, we mention \cite{StrKaWi18} presenting a fully Bayesian approach utilizing the Ensemble Kalman filter.

This work presents a novel design for simultaneous parameter identification and system stabilization. Given flawed information on the initial state and partial, noisy measurements of the evolving state of the physical twin, the virtual twin is updated iteratively using Bayesian inference. This identification procedure is carried out in conjunction with feedback control and state estimation which are performed continuously using Riccati gains based on the current parameter estimate of the virtual twin.

The paper is structured by the following sections. \Cref{sec:design_DT} provides a detailed description of our  problem setting. The asymptotic behavior of the coupled  physical-virtual twin system without parameter updates is investigated in \Cref{sec:asympt}.  The Bayesian update strategy for the parameters of the virtual twin is described in \Cref{sec:param}. \Cref{sec:Kalmanfilter} explains the Kalman-filter estimation strategy for the coupled twin state.  The final \Cref{sec:numerics} is devoted to the investigation of the proposed strategies in numerical practice. 
\section{Design of the digital twin}\label{sec:design_DT}
Suppose a real-world phenomenon evolves, for time~$t>0$, according to
\begin{equation}\label{sys-intro}
\tfrac{\rmd}{\rmd t} y(t)=A_\sigma y(t) +Bu(t), \;\text{ in } \mathbb{R}^n,
\end{equation}
with unknown initial condition $y(0) = y_0$. Motivated by the fact that modeling errors are ubiquitous in applications, we consider an uncertain parameter~$\sigma\in\bbR^p$ influencing the dynamics via~$A_\sigma$. The linear operator~$A_\sigma$ defining the free dynamics is independent of time and depends continuously on $\sigma$.  The control input is~$u(t)\in\bbR^{m}$, and the linear control operator~$B\colon\bbR^m\to \mathbb{R}^n$ is independent of time. Furthermore, we collect noisy output data according to
\begin{align}\label{eq:obs}
z(t_j) = C y(t_j) + \eta_j, \quad \text{where} \quad \eta_j \sim \mathcal{N}(0,\Gamma_j),\quad t_j\in (t_0,\ldots,t_s),
\end{align}
where the noise~$\eta_j$, $j =0,\ldots,s$ is independently normally distributed with symmetric positive definite covariance matrix $\Gamma_j$. At every measurement point $t_j$, the vector~$z$ represents the output of $\ell$~sensor measurements, with linear output operator~$C\colon \bbR^{n } \to\bbR^{\ell }$.
Above,~$n$, $m$, and~$\ell$ are fixed positive integers.

Our goal is to find an input feedback operator~$K_{\widehat{\sigma}}$ and take the input~$u(t)=K_{\widehat{\sigma}} \widehat y(t)$, where~$\widehat y(t)$ is an estimate for~$y(t)$ and ${\widehat{\sigma}}$ is an estimate for $\sigma$, which is updated as time progresses, so that the solution of the real-world system~\eqref{sys-intro}
\begin{equation}\label{eq:sys-cltwin}
\dot y(t)=A_\sigma y(t) +BK_{\widehat{\sigma}} \widehat y(t),
\end{equation}
converges to zero as time increases, that is we want to stabilize the physical system~\eqref{sys-intro}. For brevity, we omit the explicit dependence on $t$ whenever it is clear from the context.%

The input~$u=K_{\widehat{\sigma}}\widehat{y}$ is obtained based on a virtual model that sequentially assimilates data and mirrors the dynamics of the physical system. This model evolves simultaneously with the physical system and is governed by
\begin{align}\label{eq:digitaltwin}
    &\dot {\widehat y}=A_{\widehat{\sigma}} \widehat y +BK_{\widehat{\sigma}}\widehat y+L_{\widehat{\sigma}} C(\widehat y-y) - L_{\widehat{\sigma}}\eta,\quad &&\widehat y(0)\coloneqq \widehat y_0;
\end{align}
Designing the virtual twin in \eqref{eq:digitaltwin} entails addressing several interrelated challenges.
\begin{itemize}
    \item Estimating~$\widehat{\sigma}$ based on the output measurement data collected from the physical system \eqref{eq:obs}. This involves solving a sequence of inverse problems, which are typically ill-posed and sensitive to noise, while ensuring that the estimated parameters produce a dynamically consistent model suitable for reliable controller synthesis and observer design.
    \item Constructing the state-feedback operator~$K_{\widehat{\sigma}}$, a task commonly referred to as \emph{controller synthesis} or \emph{state-feedback design}. The goal is to shape the system's closed-loop behavior in accordance with performance criteria, which in this manuscript is stabilization. In the presence of uncertainty in the parameter estimate~$\widehat{\sigma}$, this becomes a problem of \emph{robust} or \emph{risk-aware} control.
    \item Designing the observer gain~$L_{\widehat{\sigma}}$ to ensure stable and accurate reconstruction of the system state from noisy and possibly partial measurements. This task is often referred to as \emph{observer design} or \emph{continuous data assimilation}. It involves balancing responsiveness to new data with robustness to measurement noise. It plays a crucial role in ensuring that the virtual state~$\widehat{y}$ remains synchronized with the true physical state~$y$.
\end{itemize}
The main contribution of this manuscript is a mathematical framework that jointly addresses the interacting challenges identified above, with the goal of stabilization of the physical system.

\subsection{The case without uncertain parameter and without measurement noise}
If we knew~$\sigma\in\bbR^p$ and if we had exact observations, i.e., $z = Cy$, a classical strategy is to seek a feedback-input operator~$K_\sigma$ and an output-injection operator~$L_\sigma$ so that
both operators~$A_\sigma +BK_\sigma$ and~$A_\sigma +L_\sigma C$ are exponentially stable. Once we find these operators, it follows that the coupled closed-loop system%
\begin{subequations}\label{digital-twin-intro}
\begin{align}
&\dot y=A_\sigma y +BK_\sigma\widehat y,\quad &&y(0)\coloneqq y_0;\\
&\dot {\widehat y}=A_\sigma \widehat y +BK_\sigma\widehat y+L_\sigma C(\widehat y-y),\quad &&\widehat y(0)\coloneqq \widehat y_0;
\end{align}
\end{subequations}
provides us with the sought stabilizing input~$K_\sigma\widehat y$. Here,~$\widehat y_0$ can be taken as an initial guess that we might have for~$y_0$.

As we can see, the estimate~${\widehat y}$ is given by a dynamic Luenberger observer, namely, by a dynamical system consisting of a copy of the physical system plus a correction term given by the injected forcing~$L_\sigma C(\widehat y-y)=L_\sigma (C\widehat y-z)$.

\subsection{The case with an uncertain parameter and measurement noise}
If the parameter~$\sigma$ is unknown, the classical strategy as described above is not directly applicable, since the operators~$K_\sigma$ and~$L_\sigma$ in~\eqref{digital-twin-intro} rely on exact knowledge of $\sigma$. Furthermore, measurement noise in the output propagates through~\eqref{digital-twin-intro}. Even if the parameter $\sigma$ was known, this persistent noise generally prevents the state from converging to zero almost surely. To overcome these limitations, we propose an adaptive strategy that incorporates online parameter identification. Specifically, we compute an estimate~$\widehat{\sigma}$ of the true parameter~$\sigma$ based on the system model \eqref{sys-intro} and the observation process \eqref{eq:obs}. Using Bayes' law, our prior belief about the parameter is continuously updated as new measurement data become available. Given an estimate~$\widehat{\sigma}$, we can mimic~\eqref{digital-twin-intro}
\begin{subequations}\label{digital-twin-intro-unc}
\begin{align}
&\dot y=A_\sigma y +BK_{\widehat{\sigma}}\widehat y,\quad &&y(0)\coloneqq y_0;\\
&\dot {\widehat y}=A_{\widehat{\sigma}} \widehat y +BK_{\widehat{\sigma}}\widehat y+L_{\widehat{\sigma}} C(\widehat y-y) - L_{\widehat{\sigma}}\eta,\quad &&\widehat y(0)\coloneqq \widehat y_0.\label{eq:digital-twin}
\end{align}
\end{subequations}
Since the observer uses~$A_{\widehat{\sigma}}$ instead of~$A_{\sigma}$, it is not guaranteed that the input~$K_{\widehat{\sigma}}\widehat y$ will be able to stabilize~$y$. For this reason, it is necessary to update~${\widehat{\sigma}}$ online so that~$A_{\widehat{\sigma}}$ progressively approaches~$A_{\sigma}$. In doing so, the dynamics of \eqref{digital-twin-intro-unc} better approximate that of \eqref{digital-twin-intro}, up to the unavoidable influence of measurement noise. We shall compute a piecewise constant function~$\widehat\sigma$ by updating~$\widehat\sigma$ periodically over time. As feedback gain $K_{\widehat{\sigma}}$ and observer gain $L_{\widehat{\sigma}}$ we shall utilize the solutions to the appropriate algebraic Riccati equations.
\subsection{Our digital twin strategy}
In order to find the sought stabilizing control input in the presence of an uncertain parameter $\sigma$ and incomplete noisy data $Cy+\eta$, we design an observer that operates in parallel with the physical system. It synchronizes with the physical system through output data collected according to \eqref{eq:obs}, computes estimates $\widehat{y}$ of $y$ using Kalman filtering, and $\widehat{\sigma}$ of $\sigma$ using Bayesian estimation. Subsequently, the digital twin acts on the physical system via the input $u(t) = K_{\widehat{\sigma}(t)}\widehat y(t)$. In this sense, the observer acts as a \emph{virtual twin} for the \emph{physical twin} \eqref{sys-intro}. Importantly, the approach illustrated in \Cref{figure1} does not rely on numerical evaluations or direct knowledge of the physical system with the true parameter. Instead, the physical system is treated as an existing entity that receives the feedback input $u$ and supplies the measurements~$z$.
\begin{figure}[htb]
    \input{figure1}
\caption{Illustration of the Digital Twin.}
\label{figure1}
\end{figure}

The reader may at this point already consult \Cref{Alg:Offl} in \Cref{sec:numerics}.

\subsection{Mathematical formulation of the virtual twin}

Output measurements of the physical system are available only at discrete time instances, and the virtual twin itself is implemented as a virtual computational model. It is therefore natural to formulate the virtual twin as a discrete-time dynamical system. To make this precise, and consistent with \eqref{eq:obs}, we assume that, between two consecutive parameter updates, with current estimate $\widehat{\sigma}$, there are~$s$ discrete observation times, see \Cref{Alg:Offl}. For~$t \in [t_j,t_{j+1}),\, j =0,\ldots,s-1$ the controlled physical system is given as
\begin{align}\label{eq:y_cont}
    \dot y(t)&=A_\sigma y(t) +BK_{\widehat{\sigma}}\widehat y_j,&&y(t_j)\coloneqq y_j,
\end{align}
where the virtual twin input $\widehat y_j(t) = \widehat y_j$, for all $t\in [t_j,t_{j+1})$ is constant between the observation points. Let us assume an equidistant grid $t_j = j \Delta t$, $\forall j \ge 0$. The exact solution of~\eqref{eq:y_cont} evaluated on the interval boundaries gives the iteration map 
\begin{align}
    y_{j+1} 
    &= e^{A_\sigma \Delta t} \, y_j + \int_0^{\Delta t} e^{A_\sigma(\Delta t-\tau)}\,\mathrm d\tau\, B K_{\widehat{\sigma}} \,\widehat y_j. \label{eq:y_iter}
\end{align}

The dynamics~\eqref{eq:y_iter} can be expressed as
\begin{align}\label{eq:y_discr}
    y_{j+1} = \clA_\sigma y_j + \clB_{\sigma} K_{\widehat{\sigma}} \widehat{y}_j,
\end{align}
for $\clA_\sigma = e^{A_\sigma \Delta t}$ and $\clB_{\sigma} = \int_0^{\Delta t} e^{A_\sigma(\Delta t-\tau)}\,\mathrm d\tau\, B$ with initializations $y_0$ and $\widehat{y}_0$ as in~\eqref{digital-twin-intro}.
Based on~$\clA_\sigma$,~$\clB_{\sigma}$, and an estimate~$\widehat{\sigma}$ of $\sigma$, we design the virtual twin as
\begin{align*}
    \widehat y_{j+1} = \clA_{\widehat{\sigma}} \widehat y_j + \clB_{\widehat{\sigma}} K_{\widehat{\sigma}} \widehat{y}_j + L_{\widehat{\sigma}}(C \widehat{y}_j - z_j),\quad \widehat{y}_0 \text{ is known}.
\end{align*}
In this way, we arrive at the coupled discrete-time system 
\begin{subequations}\label{eq:timeDiscSys_diffsigma}
    \begin{align}
        y_{j+1} 
        &=
        \clA_\sigma y_j + \clB_\sigma K_{\widehat{\sigma}} \widehat{y}_j,\label{eq:timeDiscSys_diffsigma1}\\
        z_j 
        &=
        C y_j + \eta_j,\\
        \widehat{y}_{j+1} 
        &=
        \clA_{\widehat{\sigma}} \widehat{y}_j 
        + \clB_{\widehat{\sigma}} K_{\widehat{\sigma}} \widehat{y}_j
        + L_{\widehat{\sigma}} (C \widehat{y}_j - z_j),\label{eq:timeDiscSys_diffsigma2}
    \end{align}
\end{subequations}
with initial guess~$\widehat{y}_0$ for the observer and unknown initial condition~$y_0$ for the physical system.

\begin{remark}\label{re:appr}
    Alternatively to the exact representation via the matrix exponential, we can use approximations of $\mathcal{A}_\sigma$ and $\mathcal{B}_\sigma$. Below we list two typical choices.
\begin{itemize}
    \item   For the choices
            \begin{align*}
                \clA_\sigma \coloneqq (I +\Delta t A_\sigma),\quad \text{and}\quad \clB \coloneqq\Delta t B,
            \end{align*}
            the approximation~\eqref{eq:y_discr} corresponds to a forward Euler discretization of the physical system~\eqref{eq:sys-cltwin}.
    \item   For the choices
            \begin{align*}
                \clA_\sigma \coloneqq \left(I - \frac{\Delta t}{2} A_\sigma\right)^{-1} \left(I + \frac{\Delta t}{2} A_\sigma\right),\quad \text{and},\quad \clB_\sigma\coloneqq \Delta t \left(I - \frac{\Delta t}{2} A_\sigma\right)^{-1}B,
            \end{align*}
            the approximation~\eqref{eq:y_discr} corresponds to a semi-implicit Crank--Nicolson discretization of the physical system~\eqref{eq:y_cont}. 
\end{itemize}
\end{remark}


\section{Infinite horizon asymptotics}
\label{sec:asympt}

Here, we provide an analysis for system \eqref{eq:timeDiscSys_diffsigma} as $j\to \infty$. Throughout, $\sigma$ is the fixed but unknown coefficient in \eqref{sys-intro}. First, $\widehat{\sigma}$ remains constant, then in~\Cref{sec:subsec31} the special case $\widehat{\sigma} = \sigma$ is considered, and subsequently a sequence of estimates $(\widehat{\sigma}_i)_{i\in \mathbb{N}}$ is considered in~\Cref{sec:subsec32}. Throughout we will assume that the estimates are sufficiently close to $\sigma$. All operators $\clA_\varsigma,\clB_\varsigma,K_\varsigma,L_\varsigma$ in \eqref{eq:timeDiscSys_diffsigma} are assumed to depend  continuously on $\varsigma$ in a neighborhood of $\sigma$.

For our analysis, it is convenient to introduce the tracking error $e_j= y_j-\hat y_j$, for $j=0,1, \dots $.
Then, from \eqref{eq:timeDiscSys_diffsigma}, we deduce that
\begin{align*}
    y_{j+1} &= \clA_{\sigma} y_j + \clB_{\sigma} K_{\widehat{\sigma}} \widehat{y}_j =   \clA_{\sigma} y_j + \clB_{\sigma} K_{\widehat{\sigma}} {y}_j -  \clB_{\sigma} K_{\widehat{\sigma}} e_j\\
    \widehat{y}_{j+1} &= (\clA_{\widehat{\sigma}} + \clB_{\widehat{\sigma}}K_{\widehat{\sigma}})\widehat{y}_j + L_{\widehat{\sigma}}C(\widehat{y}_j - y_j) - L_{\widehat{\sigma}} \eta_j\\
    e_{j+1} &= (\clA_\sigma - \clA_{\widehat{\sigma}}) y_j + (\clA_{\widehat{\sigma}} + L_{\widehat{\sigma}} C )e_j + (\clB_\sigma K_{\widehat{\sigma}} - \clB_{\widehat{\sigma}}K_{\widehat{\sigma}})y_j -  (\clB_\sigma K_{\widehat{\sigma}} - \clB_{\widehat{\sigma}}K_{\widehat{\sigma}}) e_j + L_{\widehat{\sigma}}\eta_j,
\end{align*}
where the conditions on the noise sequence $\{\eta_j\}$ in $\bbR^\ell$ are given below.
Associated to this system, we define
\begin{align}\label{eq:F}
F =F(\sigma,\widehat \sigma)= \begin{bmatrix}
    \clA_\sigma + \clB_\sigma K_{\widehat{\sigma}}  & - \clB_\sigma K_{\widehat{\sigma}} \\
    \clA_\sigma - \clA_{\widehat{\sigma}} + (\clB_\sigma - \clB_{\widehat{\sigma}})K_{\widehat{\sigma}} & \clA_{\widehat{\sigma}} + L_{\widehat{\sigma}}C - (\clB_\sigma - \clB_{\widehat{\sigma}})K_{\widehat{\sigma}}
\end{bmatrix}.
\end{align}
Then, the iteration in the variables $(y_j,e_j)$ can be expressed as

\begin{align}\label{eq:iter}
    X_{j+1} = F(\sigma,\widehat \sigma) \, X_j - \begin{bmatrix} 0 \\ L_{\widehat{\sigma}}\end{bmatrix} \eta_j, \text{ with } X_0= \begin{bmatrix}y_0\\y_0 - \widehat{y}_0\end{bmatrix},
\end{align}
where
\begin{align*}
 X_j = \begin{bmatrix}y_j\\e_j\end{bmatrix} = \begin{bmatrix}y_j\\y_j - \widehat{y}_j\end{bmatrix}.
\end{align*}

Hereafter, we denote by~$\varrho(M)$ the spectral radius of a matrix~$M$.

\begin{theorem}\label{thm:cpl_contraction}
    Let $\sigma$ be fixed. Assume existence of an open neighborhood $\mathcal{I}$ of $\sigma$ and continuous, matrix valued functions $\varsigma \in \mathcal{I} \mapsto K_{\varsigma}$ and $\varsigma \in \mathcal{I} \mapsto L_{\varsigma}$, and that $\varrho( \clA_{{\sigma}}+\clB_\sigma K_{{\sigma}})<1$ and $\varrho(\clA_{{\sigma}}+L_{{\sigma}} C)<1$.
    Then, there exists $\mathcal{I}^\prime \subset \mathcal{I}$ such that for all $\varsigma \in \mathcal{I}^\prime$ it holds that
    $\varrho(F(\sigma,\varsigma)) < 1$.
\end{theorem}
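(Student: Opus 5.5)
The plan is to reduce the claim to a perturbation argument around the point $\varsigma=\sigma$. We combine two facts: $F(\sigma,\sigma)$ has block-triangular structure, and the spectral radius is continuous as a function of the matrix entries.

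First, evaluate $F$ at $\varsigma=\sigma$. The terms $\clA_\sigma-\clA_{\widehat\sigma}$ and $\clB_\sigma-\clB_{\widehat\sigma}$ vanish, so
\begin{align*}
F(\sigma,\sigma)=\begin{bmatrix}\clA_\sigma+\clB_\sigma K_\sigma & -\clB_\sigma K_\sigma\\ 0 & \clA_\sigma+L_\sigma C\end{bmatrix}.
\end{align*}
This matrix is block upper triangular. Hence its characteristic polynomial factors as
\begin{align*}
\det(\lambda I-\clA_\sigma-\clB_\sigma K_\sigma)\,\det(\lambda I-\clA_\sigma-L_\sigma C),
\end{align*}
and its spectrum is the union of the two diagonal spectra. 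Consequently
\begin{align*}
\varrho(F(\sigma,\sigma))=\max\{\varrho(\clA_\sigma+\clB_\sigma K_\sigma),\varrho(\clA_\sigma+L_\sigma C)\}<1
\end{align*}
by the hypotheses.

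Second, show that $\varsigma\mapsto F(\sigma,\varsigma)$ is continuous on $\mathcal I$. Each block of $F(\sigma,\varsigma)$ is a finite sum of products of the matrices $\clA_\sigma$, $\clB_\sigma$, $C$ (which do not depend on $\varsigma$), of $\clA_\varsigma$ and $\clB_\varsigma$, and of $K_\varsigma$ and $L_\varsigma$. The gains $K_\varsigma$ and $L_\varsigma$ are continuous by assumption. For $\clA_\varsigma$ and $\clB_\varsigma$, we use the standing assumption of this section that all operators depend continuously on $\varsigma$. This also follows directly from the continuity of $\varsigma\mapsto A_\varsigma$: the matrix exponential is continuous, the integral $\int_0^{\Delta t}e^{A_\varsigma(\Delta t-\tau)}\,\mathrm d\tau\,B$ is continuous by dominated convergence on the compact interval, and the Euler and Crank--Nicolson variants in \Cref{re:appr} are rational in $A_\varsigma$ and hence continuous near $\sigma$.

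Third, use continuity of the spectral radius on $\bbR^{2n\times 2n}$. Eigenvalues depend continuously on the matrix entries, since the roots of a polynomial depend continuously on its coefficients (for instance via Rouché's theorem or Hurwitz's theorem). Therefore $\varsigma\mapsto\varrho(F(\sigma,\varsigma))$ is continuous at $\sigma$. Set $\varepsilon=1-\varrho(F(\sigma,\sigma))>0$. Continuity gives an open neighborhood $\mathcal I'\subset\mathcal I$ of $\sigma$ on which $\varrho(F(\sigma,\varsigma))<\varrho(F(\sigma,\sigma))+\varepsilon=1$, which is the claim.

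If one prefers to avoid citing continuity of eigenvalues, there is an alternative route that yields the same $\mathcal I'$. Since $\varrho(F(\sigma,\sigma))<1$, there is an induced norm $\|\cdot\|_*$ with $\|F(\sigma,\sigma)\|_*<1$. Then
\begin{align*}
\varrho(F(\sigma,\varsigma))\le\|F(\sigma,\varsigma)\|_*\le \|F(\sigma,\sigma)\|_*+\|F(\sigma,\varsigma)-F(\sigma,\sigma)\|_*<1
\end{align*}
for $\varsigma$ close to $\sigma$. This version has the advantage of also giving a uniform contraction bound, which is likely useful in the later noisy analysis. The only mildly delicate point in the whole argument is the continuity of $\varsigma\mapsto\clA_\varsigma,\clB_\varsigma$; as explained in the second step, it is either assumed or follows routinely from the continuity of $\varsigma\mapsto A_\varsigma$.
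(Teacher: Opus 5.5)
Your proof is correct, but it takes a different and considerably shorter route than the paper.

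\textbf{Your route.} You evaluate $F$ only at $\varsigma=\sigma$. There the mismatch block vanishes, and block triangularity gives $\varrho(F(\sigma,\sigma))=\max\{\varrho(\clA_\sigma+\clB_\sigma K_\sigma),\varrho(\clA_\sigma+L_\sigma C)\}<1$. You then pass to nearby $\varsigma$ by continuity (or just upper semicontinuity) of the spectral radius of the full $2n\times 2n$ matrix $F(\sigma,\varsigma)$.

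\textbf{The paper's route.} The paper works at general $\varsigma$ and writes $F(\sigma,\varsigma)$ in blocks $\clA_1,\clB_1,\clB_2,\clA_2$. It factorizes $\lambda I-F$ into a block lower-triangular factor times a factor whose lower-right block is $I-(\lambda I-\clA_2)^{-1}\clB_2(\lambda I-\clA_1)^{-1}\clB_1$. It shows this block is invertible for every $|\lambda|\ge 1$ by a Neumann series. For that it uses adapted norms with $\|\clA_1(\varsigma)\|_\dagger<1$ and $\|\clA_2(\varsigma)\|_*<1$, together with $\clB_2(\sigma)=0$.

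\textbf{What each buys.} The paper's resolvent, small-gain argument is more work, but it yields a checkable sufficient condition and an explicit estimate. Only the model-mismatch block $\clB_2(\varsigma)$ has to be small, relative to $(1-\|\clA_1\|_\dagger)(1-\|\clA_2\|_*)/(K\|\clB_1\|_\dagger)$. This quantifies how close $\varsigma$ must be to $\sigma$. Your first version is purely qualitative. Your second variant, via an induced norm with $\|F(\sigma,\sigma)\|_*<1$, gives a uniform bound $\|F(\sigma,\varsigma)\|_*\le q<1$ on a neighborhood of $\sigma$. That is exactly what the paper later invokes, by the same induced-norm-plus-continuity reasoning, in the proof of \Cref{thm:bdd_in_dist_obsErr-piececonst}. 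So your argument makes that later step self-contained.

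Both proofs rely on continuity of $\varsigma\mapsto\clA_\varsigma$ and $\varsigma\mapsto\clB_\varsigma$. As you correctly note, this is the standing assumption of \Cref{sec:asympt}, not one of the theorem's stated hypotheses.
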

\begin{remark} \label{rem:ricgains}
    Before presenting the proof of~\Cref{thm:cpl_contraction}, we point out that, under mild assumptions, the theorem can be applied with the gains obtained via standard discrete-time Riccati equations. 
    To that end, let $\sigma$ be fixed and assume that the pairs $(\mathcal{A}_\sigma,\mathcal{B}_\sigma)$ and $(\mathcal{A}_\sigma, C)$ are stabilizable and detectable, respectively.
    The continuity of $\mathcal{A}_\sigma$ and $\mathcal{B}_\sigma$ in $\sigma$ implies existence of $\mathcal{I}$ containing $\sigma$, such that, for all $\varsigma \in \mathcal{I}$ the pairs $(\mathcal{A}_{\varsigma},\mathcal{B}_{\varsigma})$ and $(\mathcal{A}_{\varsigma}, C)$ are stabilizable and detectable. Further, assume a sufficiently small time step, such that $\mathcal{A}_{\varsigma}$ is invertible. Then, for any symmetric positive definite matrices $R \in \mathbb{R}^{m\times m}$, $Q \in \mathbb{R}^{n\times n}$, the discrete algebraic Riccati equations 
    \begin{equation*}
    \begin{aligned}
        X_{\varsigma}
        &=
        \mathcal{A}_{\varsigma}^\top X_{\varsigma} \mathcal{A}_{\varsigma} 
        + 
        C^\top C
        -(\mathcal{B}_{\varsigma}^\top X_{\varsigma} \mathcal{A}_{\varsigma})^\top (R+\mathcal{B}_{\varsigma}^\top X_{\varsigma} \mathcal{B}_{\varsigma})^{-1}
        (\mathcal{B}_{\varsigma}^\top X_{\varsigma} \mathcal{A}_{\varsigma})\\
        Y_{\varsigma}
        &=
        \mathcal{A}_{\varsigma} Y_{\varsigma} \mathcal{A}_{\varsigma}^\top 
        +
        Q
        - (C Y_{\varsigma} \mathcal{A}_{\varsigma}^\top)^\top 
        (\Gamma + C Y_{\varsigma} C^\top)^{-1} C Y_{\varsigma} \mathcal{A}_{\varsigma}^\top
    \end{aligned}
    \end{equation*}
    admit unique symmetric, positive semidefinite, stabilizing solutions $X_{\varsigma}$ and $Y_{\varsigma}$, cf.~\cite[Cor.~13.5.3]{LanRod95}. Additionally, $X_{\varsigma}$ and $Y_{\varsigma}$ depend continuously on $\varsigma$, see \cite[Thm.~14.2.1]{LanRod95}. Hence, the associated gains 
    \begin{equation*}
    \begin{aligned}
        K_{\varsigma} 
        &=
        -(R + \mathcal{B}_{\varsigma}^\top  X_{\varsigma} \mathcal{B}_{\varsigma})^{-1} \mathcal{B}_{\varsigma}^\top X_{\varsigma} \mathcal{A}_{\varsigma},\\
        L_{\varsigma}
        &= 
        -\mathcal{A}_{\varsigma} Y_{\varsigma} C^\top
        (\Gamma + C Y_{\varsigma} C^\top )^{-1}
    \end{aligned}
    \end{equation*}
    satisfy the assumptions of \Cref{thm:cpl_contraction}.
\end{remark}
\begin{proof}[Proof of~\Cref{thm:cpl_contraction}]
    Let $\sigma$ be fixed and $\mathcal{I}$ as in the assumption. For now, set $\mathcal{I}^\prime = \mathcal{I}$. Define the continuous mappings
    \[
    \varsigma \mapsto
    \clA_1(\varsigma) := \clA_\sigma+\clB_{\sigma} K_{\varsigma},
    \qquad
    \varsigma \mapsto \clA_2(\varsigma) := \clA_{\varsigma}+L_{\varsigma}C + (\clB_\sigma - \clB_{\varsigma})K_{\varsigma},
    \]
    \[
    \varsigma \mapsto \clB_1(\varsigma) := -\clB_{\sigma}  K_{\varsigma},
    \qquad
    \varsigma \mapsto \clB_2(\varsigma) := \clA_{\sigma}-\clA_{\varsigma}- (\clB_\sigma - \clB_{\varsigma})K_{\varsigma}.
    \]
    By assumption, there holds $\varrho(\mathcal{A}_\sigma + \mathcal{B}_\sigma K_{\sigma}) < 1$ and due to the continuity of $K_{\sigma}$ a possible decrease of $\mathcal{I}^\prime$ ensures that for $\varsigma \in \mathcal{I}^\prime$ it follows that
    $\varrho (\mathcal{A}_1(\varsigma)) = \varrho(\mathcal{A}_\sigma + \mathcal{B}_\sigma K_{\varsigma}) < 1$. 
    Analogous arguments yield that $\varrho(\mathcal{A}_2(\varsigma)) < 1$ for $\varsigma \in \mathcal{I}^\prime$.
    We now proof the assertion under the technical assumption that there exists a submultiplicative norm $\Vert \cdot \Vert_*$ such that for all $\varsigma \in \mathcal{I}^\prime$ there holds
    \begin{equation}\label{eq:contr}
    \big\|(\lambda I-\clA_2(\varsigma))^{-1} 
    \clB_2(\varsigma) (\lambda I-\clA_1(\varsigma))^{-1}
    \clB_1(\varsigma) \big\|_* < 1
    \quad 
    \text{for all } \lambda \in \Lambda := \{\lambda \in \mathbb{C} : |\lambda|\ge 1\}.
    \end{equation}
    From now on, for ease of readability, we suppress the dependence on $\varsigma$ in the notation. 
    Observe that
    \[
    F(\sigma,\varsigma) = F = \begin{bmatrix} \clA_1 & \clB_1 \\[1mm] \clB_2 & \clA_2 \end{bmatrix}.
    \]
    Since $\varrho(\clA_i)<1$, the spectrum of $\clA_i$ is contained in the open unit disk. Hence, $\lambda I- \clA_i$ is invertible for all $\lambda \in \Lambda$, for $i=1,2$. For $\lambda \in \Lambda$ we write
    \begin{align*}
    \lambda I-F &= \begin{bmatrix} \lambda I-\clA_1 & -\clB_1 \\ -\clB_2 & \lambda I-\clA_2 \end{bmatrix}\\
    &=
    \begin{bmatrix} \lambda I-\clA_1 & 0 \\ -\clB_2 & \lambda I-\clA_2 \end{bmatrix}
    \begin{bmatrix} I & -(\lambda I-\clA_1)^{-1}\clB_1 \\[1mm] 0 & I - (\lambda I-\clA_2)^{-1} \clB_2 (\lambda I-\clA_1)^{-1} \clB_1 \end{bmatrix}.
    \end{align*}
    By hypothesis, there holds
    \[
    \big\|(\lambda I-\clA_2)^{-1} \clB_2 (\lambda I-\clA_1)^{-1} \clB_1\big\|_* < 1,
    \]
    so the lower-right block of the second factor is invertible via the Neumann series
    \[
    \left(I - (\lambda I-\clA_2)^{-1}\clB_2(\lambda I-\clA_1)^{-1}\clB_1\right)^{-1} = \sum_{i = 0}^{\infty} \left((\lambda I-\clA_2)^{-1}\clB_2(\lambda I-\clA_1)^{-1}\clB_1\right)^i.
    \]
    Hence, the second factor is invertible. The first factor is invertible as it is block lower-triangular with invertible diagonal blocks. Therefore, $\lambda I-F$ is invertible for all $\lambda \in \Lambda$, so no eigenvalue of $F$ lies on or outside the unit circle. We conclude that $\varrho(F) <1$.
    
    It remains to show the existence of norms as in \eqref{eq:contr}.
    First, note that $\clA_1(\sigma)= \clA_\sigma+\clB_{\sigma} K_{{\sigma}}$, $\clA_2(\sigma)= \clA_\sigma+ L_{\sigma}C$, and $\clB_2(\sigma)=0$.
    Now, for $\lambda \in \Lambda$ and $\varsigma \in \mathcal{I}^\prime$, define 
    \[
    G(\lambda,\varsigma) := (\lambda I-\clA_2(\varsigma))^{-1} \clB_2(\varsigma) (\lambda I-\clA_1(\varsigma))^{-1} \clB_1(\varsigma).
     \]
    Since $\varrho(\clA_i(\sigma))<1$, there exists norms $\Vert \cdot \Vert_*$ and $\Vert \cdot \Vert_\dagger$ on $\mathbb{R}^n$, such that, for their respective induced matrix norms there holds
    $\Vert \mathcal{A}_1(\sigma) \Vert_\dagger <1$
    and
    $\Vert \mathcal{A}_2(\sigma) \Vert_* < 1$, \cite[Sect.~6.9, (6.9.2) Thm.]{StoBul93}.
    Also, possibly after further reducing $\mathcal{I}^\prime$, we have that $\Vert \clA_1(\varsigma) \Vert_\dagger, \Vert \mathcal{A}_2(\varsigma) \Vert_* <1$, for all $\varsigma \in \mathcal{I}^\prime$. 
    Since the matrix norms $\Vert \cdot \Vert_\dagger$ and $\Vert \cdot \Vert_*$ are equivalent,
    there exists $K>0$, such that $\dnorm{\cdot}{*} \le K\dnorm{\cdot}{\dagger}$.
    Consequently, for $\lambda \in \Lambda$, we estimate
    \begin{equation*}
    \begin{array}l
    \displaystyle
    \dnorm{G(\lambda,\varsigma)}{*} 
    \le 
    \frac{K \dnorm{\clB_2(\varsigma)}{*}}{|\lambda| - \dnorm{\clA_2(\varsigma)}{*}}
    \dnorm{  (\lambda I-\clA_1(\varsigma))^{-1}
    \clB_1(\varsigma)}{\dagger} \\ \\
    \displaystyle
    \le
    \frac{K \dnorm{\clB_2(\varsigma)}{*}  \dnorm{\clB_1(\varsigma)}{\dagger}}
    {(|\lambda|- \dnorm{\clA_2(\varsigma)}{*})(|\lambda| -\dnorm{\clA_1(\varsigma)}{\dagger})}
    \le
    \frac{K \dnorm{\clB_2(\varsigma)}{*}  \dnorm{\clB_1(\varsigma)}{\dagger}}
    {(1- \dnorm{\clA_2(\varsigma)}{*})(1 -\dnorm{\clA_1(\varsigma)}{\dagger})},
    \end{array}
    \end{equation*}
    and hence, due to the form of $\clB_2$,  after further reducing $\mathcal{I}^\prime$, we have that
    $\dnorm{G(\lambda,\varsigma)}{*} < 1$  and thus, \eqref{eq:contr} holds for all $\lambda\in \Lambda$. As an induced matrix norm $\Vert \cdot \Vert_*$ is submultiplicative, the proof is finished.
\end{proof}

\begin{remark}
If  the operators $\clA_\sigma$ and $\clB_\sigma$ introduced below \eqref{eq:y_discr} are approximated by 
 $\tilde \clA_\sigma$ and $\tilde \clB_\sigma$, as discussed in \Cref{re:appr}, then \Cref{thm:cpl_contraction} is again applicable with $\clA_\sigma$ and $\clB_\sigma$ replaced by $\tilde \clA_\sigma$ and $\tilde \clB_\sigma$, and analogously for  $\clA_{\widehat \sigma}$ and $\clB_{\widehat \sigma}$. Indeed, for this purpose, one can again validate the steps in the proof of \Cref{thm:cpl_contraction} provided that 
  $\|\clA_\sigma- \tilde \clA_{\sigma}\|$, $\|\clB_\sigma-\clB_{\widehat \sigma}\|$, $\|\clA_{\widehat \sigma}- \tilde \clA_{\widehat \sigma}\|$, and  $\|\clB_{\widehat \sigma}- \tilde \clB_{\widehat \sigma}\|$ are sufficiently small. 
\end{remark}
We turn to analyzing the convergence in distribution of system \eqref{eq:timeDiscSys_diffsigma}. First, the definition adapted to our setting is recalled.  
\begin{definition}
    Let $X_k$ be a sequence of normally distributed random vectors with $X_k \sim \mathcal{N}(m_k,\Sigma_k)$. We say that $X_k$ converges in distribution to the Gaussian variable $X \sim \mathcal{N}(m,\Sigma)$ if
    $m_k$ and $\Sigma_k$ converge to $m$ and $\Sigma$, respectively.
\end{definition}
\begin{remark}
    For a more general definition of convergence in distribution see \cite[Sec.~25]{Bil95}. By \cite[Thm.~30.2, Ex.~30.1]{Bil95}, for Gaussian random vectors, it is equivalent to our definition.
\end{remark}

We henceforth assume that the initial condition $y_0$ of the state variable is distributed according to a Gaussian $\mathcal{N}(m_0,\mathcal{C}_0)$.  For the measurement error we take  $\eta_j \sim \mathcal{N}(0,\Gamma_j)$ mutually independent and $\Gamma_j \to \Gamma_\infty$. We shall also need the covariance matrix
\begin{equation}\label{eq:Pi_0}
\mathbf{\Pi}_0 = \left[
                      \begin{array}{cc}
                        \mathcal{C}_0 & \mathcal{C}_0 \\
                        \mathcal{C}_0 & \mathcal{C}_0 \\
                      \end{array}
                    \right].
\end{equation}

\begin{theorem}\label{thm:conv_in_dist_obsErr}
    Given $\sigma \in \mathbb{R}^p$ and $\widehat \sigma \in \mathbb{R}^p$, consider system \eqref{eq:timeDiscSys_diffsigma} with the assumptions in \Cref{thm:cpl_contraction} holding for $\sigma, \widehat \sigma$ and initialize the virtual twin with $\widehat{y}_0 = m_0$.
    
    Then, the iterates $X_j$ of \eqref{eq:timeDiscSys_diffsigma} are  normally distributed at each iteration level, and $X_j \sim \mathcal{N}(F^j \bigl[\begin{smallmatrix} m_0 \\ 0 \end{smallmatrix}\bigr] ,\mathbf{\Pi}_j)$, where
    \begin{equation*}
        \mathbf{\Pi}_j = F(\sigma,\widehat \sigma)^j \, \mathbf{\Pi}_0 \, {F(\sigma,\widehat \sigma)^\top}^j
        +
        \sum_{k=0}^{j-1} F(\sigma,\widehat \sigma)^{j-1-k}
        \,  \bigl[\begin{smallmatrix} 0\\ L_{\widehat \sigma} \end{smallmatrix}\bigr] {\Gamma_k}  \bigl[\begin{smallmatrix} 0\\ L_{\widehat \sigma} \end{smallmatrix}\bigr]^\top \, {F(\sigma,\widehat \sigma)^\top\,^{j-1-k}},
    \end{equation*}%
    for $j\ge1$.
    Further, for $j \to \infty $, the pair $(y_j,e_j)$ converges  to
    \begin{equation*}
        (y_\infty,e_\infty) \sim \mathcal{N}(0,\mathbf{\mathbf{\Pi_\infty}})
    \end{equation*}
    in distribution, where $\mathbf{\Pi_\infty}$ is  the unique solution to the Lyapunov equation
    \begin{equation}\label{eq:kk1}
        \mathbf{\Pi_\infty} = F(\sigma,\widehat \sigma) \mathbf{\Pi_\infty} F(\sigma,\widehat \sigma)^\top +  \bigl[\begin{smallmatrix} 0\\ L_{\widehat \sigma} \end{smallmatrix}\bigr] \Gamma_\infty  \bigl[\begin{smallmatrix} 0\\ L_{\widehat \sigma} \end{smallmatrix}\bigr]^\top.
    \end{equation}
\end{theorem}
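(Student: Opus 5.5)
We argue by induction on $j$ using the linear recursion \eqref{eq:iter}, $X_{j+1}=F X_j-\bigl[\begin{smallmatrix}0\\ L_{\widehat\sigma}\end{smallmatrix}\bigr]\eta_j$, with $F=F(\sigma,\widehat\sigma)$. For the base case, $X_0=\bigl[\begin{smallmatrix}y_0\\ y_0-m_0\end{smallmatrix}\bigr]$ is an affine image of the Gaussian $y_0\sim\mathcal N(m_0,\mathcal C_0)$, because $\widehat y_0=m_0$ is deterministic. Hence $X_0$ is Gaussian with mean $\bigl[\begin{smallmatrix}m_0\\0\end{smallmatrix}\bigr]$. Its covariance is $\mathbf{\Pi}_0$ from \eqref{eq:Pi_0}, since both blocks equal $y_0$ minus a constant.

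For the inductive step, unrolling the recursion gives
\[
X_j=F^jX_0-\sum_{k=0}^{j-1}F^{j-1-k}\bigl[\begin{smallmatrix}0\\ L_{\widehat\sigma}\end{smallmatrix}\bigr]\eta_k .
\]
We assume, as is implicit in the setting, that $y_0$ is independent of $\eta_0,\eta_1,\dots$. Then $X_j$ is a linear image of the jointly Gaussian vector $(y_0,\eta_0,\dots,\eta_{j-1})$, so it is Gaussian. Its mean is $F^j\bigl[\begin{smallmatrix}m_0\\0\end{smallmatrix}\bigr]$ because the noise has zero mean. By independence the covariance is the sum of the covariances of the individual terms, which is exactly the stated formula for $\mathbf{\Pi}_j$. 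Equivalently, $\mathbf{\Pi}_{j+1}=F\mathbf{\Pi}_jF^\top+\bigl[\begin{smallmatrix}0\\ L_{\widehat\sigma}\end{smallmatrix}\bigr]\Gamma_j\bigl[\begin{smallmatrix}0\\ L_{\widehat\sigma}\end{smallmatrix}\bigr]^\top$; the cross terms vanish because $\eta_j$ is independent of $X_j$.

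For the limit, \Cref{thm:cpl_contraction} applies with $\varsigma=\widehat\sigma$ and gives $\varrho(F)<1$. Hence there exist $c>0$ and $q\in(\varrho(F),1)$ with $\|F^k\|\le cq^k$. This immediately gives convergence of the means, $F^j\bigl[\begin{smallmatrix}m_0\\0\end{smallmatrix}\bigr]\to0$, and of the initial-covariance term, $F^j\mathbf{\Pi}_0(F^\top)^j\to0$.

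Since $\varrho(F)<1$, the Lyapunov equation \eqref{eq:kk1} has the unique solution
\[
\mathbf{\Pi}_\infty=\sum_{i\ge0}F^iN(F^\top)^i, \qquad N:=\bigl[\begin{smallmatrix}0\\ L_{\widehat\sigma}\end{smallmatrix}\bigr]\Gamma_\infty\bigl[\begin{smallmatrix}0\\ L_{\widehat\sigma}\end{smallmatrix}\bigr]^\top .
\]
Uniqueness holds because the map $P\mapsto P-FPF^\top$ has eigenvalues $1-\lambda_a\bar\lambda_b\neq0$.

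The main (mild) obstacle is the sum with time-varying covariances $\Gamma_k\to\Gamma_\infty$. Substituting $i=j-1-k$, we split it as $\sum_{i=0}^{j-1}F^iN(F^\top)^i$ plus $\sum_{i=0}^{j-1}F^i\tilde N_{j-1-i}(F^\top)^i$, where $\tilde N_k$ is built like $N$ but with $\Gamma_k-\Gamma_\infty$ in place of $\Gamma_\infty$. The first part converges to $\mathbf{\Pi}_\infty$. The second part is bounded by $c^2\|L_{\widehat\sigma}\|^2\sum_{i=0}^{j-1}q^{2i}\|\Gamma_{j-1-i}-\Gamma_\infty\|$, which is the convolution of a summable sequence with a null sequence. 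Splitting the index range at $i=M$ and at $i>M$ shows this tends to $0$. Together with the mean convergence, this yields $X_j\to\mathcal N(0,\mathbf{\Pi}_\infty)$ in distribution in the sense of the Definition.
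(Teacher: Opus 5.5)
Your proposal is correct and follows essentially the same route as the paper. You unroll \eqref{eq:iter}, use the independence of $y_0$ and the $\eta_k$ to get Gaussianity and the formula for $\mathbf{\Pi}_j$, and use $\varrho(F(\sigma,\widehat\sigma))<1$ from Theorem~\ref{thm:cpl_contraction} for the limit and for uniqueness of the Lyapunov solution; the only difference is that you spell out what the paper delegates to a ``minor extension'' of a cited lemma, namely the splitting $\Gamma_k=\Gamma_\infty+(\Gamma_k-\Gamma_\infty)$ with the summable-times-null convolution estimate, and the eigenvalue argument for $P\mapsto P-FPF^\top$.
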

\begin{proof}
    From \eqref{eq:iter}, for $j \geq 0$, we obtain the recursion formula
    \begin{equation*}
        X_j
        =
        F(\sigma,\widehat \sigma)^j \bigl[\begin{smallmatrix} y_0 \\ y_0-m_0 \end{smallmatrix}\bigr] - \sum_{k=0}^{j-1} F(\sigma,\widehat \sigma)^{j-1-k}  \bigl[\begin{smallmatrix} 0 \\ L_{\widehat \sigma} \end{smallmatrix}\bigr] \eta_k.
    \end{equation*}
    The independence of $y_0$ and the $\eta_j$ yields the characterization of $X_j$. Since  $\Gamma_j \to \Gamma_\infty$ and $F(\sigma, \widehat \sigma)$ is a contraction, by \Cref{thm:cpl_contraction}, a minor extension of the arguments in \cite[Lemma 4.2, Corollary 2]{soed}
    allows us to conclude that $\mathbf{\Pi}_j$ converges to the solution of the Lyapunov equation \eqref{eq:kk1}. Uniqueness of $\mathbf{\Pi}_\infty$ follows from the contraction property of $F$.  This ends the proof. 
\end{proof}
We consider next the case of decaying measurement noise.
\begin{corollary}
    In addition to the assumptions of \Cref{thm:conv_in_dist_obsErr}, let $\Gamma_j \to 0$, $j \to \infty$. Then, for any $\epsilon > 0$, there holds
    \begin{equation*}
        P(\Vert y_j \Vert + \Vert e_j \Vert > \epsilon) \to 0,~~~ j \to \infty.
    \end{equation*}
\end{corollary}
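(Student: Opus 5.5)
We deduce the corollary from \Cref{thm:conv_in_dist_obsErr} with $\Gamma_\infty=0$, and then pass from convergence of the Gaussian law to convergence in probability via Chebyshev's inequality.

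\textbf{Step 1: the limiting law.} With $\Gamma_\infty=0$, the Lyapunov equation \eqref{eq:kk1} becomes $\mathbf{\Pi}_\infty = F\mathbf{\Pi}_\infty F^\top$, where $F=F(\sigma,\widehat\sigma)$. Since $\varrho(F)<1$ by \Cref{thm:cpl_contraction}, iterating gives $\mathbf{\Pi}_\infty = F^k\mathbf{\Pi}_\infty (F^\top)^k \to 0$, so $\mathbf{\Pi}_\infty=0$. Hence \Cref{thm:conv_in_dist_obsErr} yields $\mathbf{\Pi}_j\to 0$. The means also satisfy $m_j:=F^j\bigl[\begin{smallmatrix} m_0\\ 0\end{smallmatrix}\bigr]\to 0$, again because $\varrho(F)<1$.

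If one prefers not to rely on the extension cited from \cite{soed}, the convergence $\mathbf{\Pi}_j\to0$ can be checked directly. Fix a norm with $\|F^k\|\le c\,q^k$ for some $q<1$. The first term of $\mathbf{\Pi}_j$ is then $O(q^{2j})$. The sum is bounded by $c^2\|L_{\widehat\sigma}\|^2\sum_{k=0}^{j-1} q^{2(j-1-k)}\|\Gamma_k\|$, which is a discrete convolution of a summable sequence with a null sequence and therefore tends to zero. To see this, split the sum at $k=j/2$: for small $k$ use $\sup_k\|\Gamma_k\|$ together with the geometric factor, and for large $k$ use $\sup_{k\ge j/2}\|\Gamma_k\|\to0$ together with $\sum_i q^{2i}<\infty$.

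\textbf{Step 2: from moments to probability.} Since $X_j$ is Gaussian with mean $m_j$ and covariance $\mathbf{\Pi}_j$,
\[
\mathbb{E}\|X_j\|^2=\|m_j\|^2+\operatorname{tr}\mathbf{\Pi}_j\to0 .
\]
Moreover, $\|y_j\|+\|e_j\|\le \sqrt2\,\|X_j\|$ for the Euclidean norms. Markov's inequality therefore gives
\[
P(\|y_j\|+\|e_j\|>\epsilon)\le \frac{2\,\mathbb{E}\|X_j\|^2}{\epsilon^2}\to0 .
\]
Equivalently, convergence in distribution to the constant $0$ implies convergence in probability.

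\textbf{Main obstacle.} The only delicate point is justifying $\mathbf{\Pi}_j\to\mathbf{\Pi}_\infty=0$ when the noise covariances merely tend to zero rather than vanish. The convolution-splitting argument above settles this without needing any summability of $\|\Gamma_k\|$.
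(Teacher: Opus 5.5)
Your proposal is correct and follows essentially the same route as the paper. Both arguments set $\Gamma_\infty=0$ so that the Lyapunov solution vanishes, deduce $m_j\to0$ and $\mathbf{\Pi}_j\to0$, and apply Markov's inequality to $\mathbb{E}\|X_j\|^2=\|m_j\|^2+\operatorname{tr}\mathbf{\Pi}_j$. Your additions are two details the paper skips: the direct convolution-splitting proof of $\mathbf{\Pi}_j\to0$, which the paper leaves to the cited lemma, and the explicit bound $\|y_j\|+\|e_j\|\le\sqrt2\,\|X_j\|$, which the paper does not state.
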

\begin{proof}
    Again we denote $\begin{bmatrix} y_j \\ e_j \end{bmatrix} = X_j$. From the previous theorem, we have that $X_j \sim \mathcal{N}(m_j,\mathbf{\Pi}_j)$, with $m_j \to 0$ for $j \to \infty$. Concerning $\mathbf{\Pi}_j$, we insert $\Gamma_\infty = 0$ into \eqref{eq:kk1}, and utilizing the uniqueness of the associated solution, we obtain that $\mathbf{\Pi}_j$ converges to $\mathbf{\Pi}_\infty = 0$.
    Utilizing Markov's inequality for every $\epsilon > 0$, we obtain that
    \begin{equation*}
        P \left( \Vert X_j \Vert > \epsilon \right) 
        \leq 
        \epsilon^{-2} ~ \mathbb{E}\left[ \Vert X_j \Vert^2 \right]
        =
        \epsilon^{-2} ~ \left( \Vert m_j \Vert^2 + \mathrm{tr}(\mathbf{\Pi}_j) \right). 
    \end{equation*}
    As $\mathbf{\Pi}_j$ converges to zero, so does its trace $\mathrm{tr}(\mathbf{\Pi}_j)$, and the assertion is shown. 
\end{proof}

\subsection{The case~$\widehat\sigma=\sigma$}\label{sec:subsec31} The following corollary addresses the case that the estimated parameter $\widehat \sigma$ coincides with the true parameter of the physical system. For convenience, we repeat the expression for $F$ which in this case reads
$$
F =F(\sigma,\sigma)= \begin{bmatrix}
    \clA_\sigma + \clB_\sigma K_{{\sigma}}  & - \clB_\sigma K_{{\sigma}} \\
    0& \clA_{{\sigma}} + L_{{\sigma}}C 
\end{bmatrix}.
$$

Further, for $j = 1,\dots$, there holds
\begin{equation}\label{eq:F^j}
    F^j = \begin{bmatrix}
        (\mathcal{A}_{\sigma} + \mathcal{B}_{\sigma} K_\sigma )^j 
        & S_j \\
        0 & (\mathcal{A}_{\sigma} + L_\sigma C)^j
    \end{bmatrix},
\end{equation}
where 
$S_j = - \sum_{k=0}^{j-1} (\mathcal{A}_{\sigma} + \mathcal{B}_{\sigma} K_\sigma)^k
\mathcal{B}_{\sigma} K_\sigma 
(\mathcal{A}_{\sigma} + L_\sigma C)^{j-1-k}$.

\begin{corollary}
    Consider the setting of the previous theorem, and, in addition, assume that $\widehat \sigma = \sigma$. Then, we have $y_j \sim \mathcal{N}(m_j,\Pi^y_j)$ and $e_j \sim \mathcal{N}(0,\Pi^e_j)$, where
    \begin{equation*}
    \begin{aligned}
        m_j &= (\clA_\sigma + \clB_{\sigma} K_\sigma)^j m_0 \\
        \Pi^e_j &= (\clA_\sigma+ L_\sigma C)^j
        {\mathcal{C}_0} {(\clA_\sigma+ L_\sigma C)^j}^\top
        \\
        &+\sum_{k=0}^{j-1} (\clA_\sigma+ L_\sigma C)^{j-1-k} L_\sigma \Gamma_k L_\sigma^\top
       {(\clA_\sigma+ L_\sigma C)^{j-1-k}}^\top\\
        \Pi^y_j &= ((\clA_\sigma+ \clB_{\sigma} K_\sigma)^j+S_j)
        {\mathcal{C}_0} {((\clA_\sigma+ \clB_{\sigma} K_\sigma)^j+S_j)}^\top
        \\
        &+\sum_{k=0}^{j-1} S_{j-1-k} {L_\sigma} \Gamma_k {L_\sigma}^\top S_{j-1-k}^\top,
    \end{aligned}
    \end{equation*}
    for $j\ge1$, with $S_i$ as defined above. Further, we have convergence in distribution of $e_j$ and $y_j$  to
     $e_\infty \sim \mathcal{N}(0, \Pi^e_\infty)$ and
    $y_\infty \sim \mathcal{N}(0, \Pi^y_\infty)$, where $\Pi^e_\infty$ and $\Pi_\infty^y$ satisfy
    \begin{equation} \label{eq:rice_a}
        \Pi^e_\infty
        = (\clA_{\sigma} + L_\sigma C ) \Pi^e_\infty (\clA_{\sigma} + L_\sigma C)^\top
        +L_\sigma \Gamma_\infty L_\sigma^\top  ,
    \end{equation}
    and 
    \begin{equation} \label{eq:ricy_a}
    \begin{array}{ll}
        \Pi_\infty^y &= (\clA_{\sigma} + \clB_{\sigma} K_\sigma) \Pi^y_\infty (\clA_{\sigma} + \clB_{\sigma} K_\sigma)^\top
        + \clB_{\sigma} K_\sigma \Pi^e_\infty K_\sigma^\top \clB_{\sigma}^\top \\[1.5ex]
         &- (\clA_{\sigma} + \clB_{\sigma} K_\sigma)\Pi_\infty^{ye}(\clB_{\sigma} K_\sigma)^\top  - (\clB_{\sigma} K_\sigma) \Pi^{ey}_\infty (\clA_{\sigma} + \clB_{\sigma} K_\sigma)^\top,
    \end{array}    
    \end{equation}
    with 
    \begin{equation*}
        \Pi_\infty^{ye} ={\Pi^{ey}_\infty}^\top 
        = (\clA_{\sigma} + \clB_{\sigma} K_\sigma)\Pi^{ye}_\infty (\clA_{\sigma} + L_\sigma C)^\top - \clB_{\sigma} K_\sigma \Pi^e_\infty (\clA_{\sigma} + L_\sigma C)^\top.
    \end{equation*}
\end{corollary}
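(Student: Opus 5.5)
The plan is to specialize \Cref{thm:conv_in_dist_obsErr} to $\widehat\sigma=\sigma$. In that case $F=F(\sigma,\sigma)$ is block upper triangular, and the block formula \eqref{eq:F^j} for $F^j$ lets us read off all components. Throughout, write $\clA_1=\clA_\sigma+\clB_\sigma K_\sigma$, $\clA_2=\clA_\sigma+L_\sigma C$ and $\clB_1=-\clB_\sigma K_\sigma$. The hypotheses of \Cref{thm:cpl_contraction} at $\varsigma=\sigma$ give $\varrho(\clA_1),\varrho(\clA_2)<1$. Since $F$ is block triangular with these diagonal blocks, also $\varrho(F)<1$ directly.

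\emph{Finite-$j$ formulas.} I start from the representation used in the proof of \Cref{thm:conv_in_dist_obsErr}:
\[
X_j=F^j\bigl[\begin{smallmatrix} y_0\\ y_0-m_0\end{smallmatrix}\bigr]-\sum_{k=0}^{j-1}F^{j-1-k}\bigl[\begin{smallmatrix}0\\ L_\sigma\end{smallmatrix}\bigr]\eta_k .
\]
Inserting \eqref{eq:F^j}, with the convention $S_0=0$, gives the two components:
\[
y_j=\clA_1^j y_0+S_j(y_0-m_0)-\sum_{k=0}^{j-1} S_{j-1-k}L_\sigma\eta_k,\qquad
e_j=\clA_2^j(y_0-m_0)-\sum_{k=0}^{j-1}\clA_2^{j-1-k}L_\sigma\eta_k.
\]
Taking expectations yields $m_j=\clA_1^j m_0$ and $\mathbb E e_j=0$. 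Centering gives $y_j-m_j=(\clA_1^j+S_j)(y_0-m_0)-\sum_k S_{j-1-k}L_\sigma\eta_k$. Since $y_0,\eta_0,\eta_1,\dots$ are mutually independent Gaussians, the covariances are sums of the individual contributions. This gives exactly the stated $\Pi^y_j$ and $\Pi^e_j$, and Gaussianity of $y_j$ and $e_j$ follows from linearity. Equivalently, one can take the diagonal blocks of $\mathbf\Pi_j$ from \Cref{thm:conv_in_dist_obsErr}.

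\emph{Limits.} By \Cref{thm:conv_in_dist_obsErr}, $X_j\to\mathcal N(0,\mathbf\Pi_\infty)$ in distribution. Marginals are linear projections, so their means and covariances converge as well. Hence $y_j\to\mathcal N(0,\Pi^y_\infty)$ and $e_j\to\mathcal N(0,\Pi^e_\infty)$, where $\Pi^y_\infty,\Pi^e_\infty$ are the diagonal blocks of $\mathbf\Pi_\infty$ and $\Pi^{ye}_\infty$ is its $(1,2)$ block. To obtain the stated equations, I partition \eqref{eq:kk1} blockwise using
\[
F=\bigl[\begin{smallmatrix}\clA_1&\clB_1\\0&\clA_2\end{smallmatrix}\bigr],\qquad
F^\top=\bigl[\begin{smallmatrix}\clA_1^\top&0\\ \clB_1^\top&\clA_2^\top\end{smallmatrix}\bigr].
\]
The noise term enters only the $(2,2)$ block.
\begin{itemize}
\item The $(2,2)$ block gives \eqref{eq:rice_a}.
\item The $(1,2)$ block gives $\Pi^{ye}_\infty=(\clA_1\Pi^{ye}_\infty+\clB_1\Pi^e_\infty)\clA_2^\top$, which is the stated cross equation.
\item The $(1,1)$ block gives $\Pi^y_\infty=\clA_1\Pi^y_\infty\clA_1^\top+\clA_1\Pi^{ye}_\infty\clB_1^\top+\clB_1\Pi^{ey}_\infty\clA_1^\top+\clB_1\Pi^e_\infty\clB_1^\top$. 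Substituting $\clB_1=-\clB_\sigma K_\sigma$ gives \eqref{eq:ricy_a}.
\end{itemize}
Symmetry of $\mathbf\Pi_\infty$ gives $\Pi^{ey}_\infty={\Pi^{ye}_\infty}^\top$.

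\emph{Uniqueness of the block equations.} I solve the three equations successively. Each is a Stein equation of the form $X=MXN^\top+W$ with $\varrho(M),\varrho(N)<1$, hence uniquely solvable. The first has $(M,N)=(\clA_2,\clA_2)$, the second $(\clA_1,\clA_2)$, and the third $(\clA_1,\clA_1)$. The main obstacle is only bookkeeping: keeping the indices of $S_{j-1-k}$ and the signs from $\clB_1=-\clB_\sigma K_\sigma$ consistent with the statement. No new analytic estimates are needed beyond \Cref{thm:conv_in_dist_obsErr}.
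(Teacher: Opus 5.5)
Your proposal is correct and follows essentially the same route as the paper: you specialize \Cref{thm:conv_in_dist_obsErr} to the block upper-triangular $F(\sigma,\sigma)$, read off the finite-$j$ means and covariances through \eqref{eq:F^j}, and obtain \eqref{eq:rice_a}, the cross equation and \eqref{eq:ricy_a} by partitioning \eqref{eq:kk1} and solving for $\Pi^e_\infty$, $\Pi^{ye}_\infty$, $\Pi^y_\infty$ in turn. The differences are minor refinements rather than a different argument: you compute the covariances directly from the componentwise representations of $y_j$ and $e_j$ (with the convention $S_0=0$), and you add the Stein-equation uniqueness check. Your bookkeeping also matches the stated formulas more cleanly than the paper's intermediate block display, which contains index and sign slips.
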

\begin{proof}
    For the sake of readability, within this proof, we drop the dependence on $\sigma$ in the notation.
    Applying \Cref{thm:conv_in_dist_obsErr}, for $j \geq 1$, we obtain that
    \begin{equation*}
        X_j
        =
        \begin{bmatrix}
            y_j \\ e_j   
        \end{bmatrix}
        \sim
        \mathcal{N}(F^j \bigl[ \begin{smallmatrix} m_0 \\ 0 \end{smallmatrix} \bigr], \mathbf{\Pi}_j).
    \end{equation*}
    Utilizing \eqref{eq:F^j}, the announced expected values of $y_j$ and $e_j$ follow immediately. We proceed by characterizing $\mathbf{\Pi}_j$ via
    \begin{equation*}
        \mathbf{\Pi}_j 
        =
        F^j \, \mathbf{\Pi}_0 \, {F^\top}^j
        +
        \sum_{k=0}^{j-1} F^{j-1-k}
        \,  \bigl[\begin{smallmatrix} 0\\ L\end{smallmatrix}\bigr] {\Gamma_k}
        \bigl[\begin{smallmatrix} 0\\ L\end{smallmatrix}\bigr]^\top \, {F^\top\,^{j-1-k}}.
    \end{equation*}
    After plugging in \eqref{eq:Pi_0} and \eqref{eq:F^j}, basic calculus yields that
    \begin{equation*}
        \mathbf{\Pi}_j
        =
        \begin{bmatrix}
            \Pi_j^{(1,1)} & \Pi_j^{(1,2)} \\
            \Pi_j^{(2,1)} & \Pi_j^{(2,2)}
        \end{bmatrix},
    \end{equation*}
    with 
    \begin{equation*}
    \begin{aligned}
        {\Pi}_j^{(1,1)}
        &=
        \left( (\mathcal{A} + \mathcal{B} K)^j + S_k \right)
        \mathcal{C}_0 \left( (\mathcal{A} + \mathcal{B} K)^j + S_k \right)^\top
        - \sum_{k=1}^{j-1} 
        S_{j-1-k} L \Gamma_k L^\top S_{j-1-k}^\top,\\
        {\Pi}_j^{(2,2)}
        &=
        (\mathcal{A} + L C) \mathcal{C}_0 (\mathcal{A} + L C)^\top
        - \sum_{k=1}^{j-1} 
        (\mathcal{A} + L C)^{j-1-k} L \Gamma_k L^\top {(\mathcal{A} + L C)^{j-1-k}}^\top,\\
        {\Pi}_j^{(1,2)}
        &=
        {{\Pi}_j^{(2,1)}}^\top\\
        &=
        (\mathcal{A} + L C)^j \mathcal{C}_0
        \left( (\mathcal{A} + \mathcal{B} K)^j + S_k \right)^\top
        - \sum_{k=1}^{j-1} 
        (\mathcal{A} + L C)^{j-1-k} L \Gamma_k L^\top S_{j-1-k}^\top,
    \end{aligned}
    \end{equation*}
    and the covariances of $y_j$ and $e_j$ are given by the diagonal blocks $\Pi_j^y = {\Pi}_j^{(1,1)}$ and $\Pi_j^e ={\Pi}_j^{(2,2)}$, respectively. 

    We turn to the verification of \eqref{eq:rice_a} and  \eqref{eq:ricy_a} and express \eqref{eq:kk1} as 
    \begin{equation*}  
    \begin{array} l
    \mathbf{\Pi}_\infty 
    =
    \left[
    \begin{array}{cc}
    \Pi^y_\infty & \Pi^{ye}_\infty  \\
    \Pi^{ey}_\infty & \Pi^e_\infty 
    \end{array}
    \right]
    = \\ \\
    \left[ \!\!\!
    \begin{array}{cc}
    \clA 
    +
    \clB K  \!\!\! & \!\!\! - \clB K \\
    0& \clA 
    +
    LC 
    \end{array}
    \!\!\! \right]    
    \!\!\!\left[\!\!\!
    \begin{array}{cc}
    \Pi^y_\infty  \!\!& \!\! \Pi^{ye}_\infty  \\
    \Pi^{ey}_\infty  \!\!& \!\! \Pi^e_\infty 
    \end{array}
    \!\!\! \right]   
    \!\!\! \left[ \!\!\!
    \begin{array}{cc}
    (\clA + \clB K)^\top   \!\!\!& \!\!\!0 \\
    (- \clB K)^\top \!\!\!&\!\!\! (\clA + LC)^\top 
    \end{array}
    \!\!\! \right]   +
    \left[ \!\!\!
    \begin{array}{cc}
    0 \!\!\!&\!\!\! 0\\
    0\!\!\!&\!  L \Gamma_\infty L^\top
    \end{array}
    \!\!\! \right].   
    \end{array}     
    \end{equation*}                 
    Solving this equation first for $\Pi^e_\infty$, we obtain \eqref{eq:rice_a}. Then, we solve for $\Pi^{ye}_\infty$, and finally for $\Pi^y_\infty$ to obtain \eqref{eq:ricy_a}.
\end{proof}

\subsection{The case~$\widehat\sigma$ is piecewise constant}\label{sec:subsec32}

Eventually, we want to update the estimate of the uncertain parameter online. We point out that switching between stable systems may have destabilizing effects \cite[Prop.~1]{AkarPaulSafoMitra06}, \cite[Prob.~A]{LiberzonMorse99}, \cite[Sect.~4.3.3]{GuthKunRod25MCRF}. This does not happen if~$\widehat \sigma$ remains in a sufficiently small neighborhood~$\clI$ of~$\sigma$, because $\varrho (\clA_\sigma+\clB_\sigma K_\sigma)<1$ implies~$\dnorm{\clA_\sigma+\clB_\sigma K_\sigma}{*}<1$ for some induced operator norm and, by continuity, $\dnorm{\clA_{\widehat\sigma}+\clB_{\widehat\sigma} K_{\widehat\sigma}}{*}<1$ (by taking a smaller neighborhood $\widetilde \clI\subset\clI$, if necessary).

Fix~$\Delta t>0$, we shall consider the case where~$\widehat\sigma$ is updated at instants of time multiples of~$\Delta t$:
\[\widehat\sigma(t)=\widehat\sigma_i,\qquad i\Delta t\le t<(i+1)\Delta t,\quad i\in\bbN.\]

Now, let
\[
 F_j =F(\sigma,\widehat\sigma_j)= \begin{bmatrix}
    \clA_\sigma + \clB_{\widehat\sigma_j} K_{\widehat\sigma_j}  & - \clB_{\widehat\sigma_j} K_{\widehat\sigma_j} \\
    0& \clA_{\widehat\sigma_j} + L_{\widehat\sigma_j}C 
\end{bmatrix},\qquad j\in\bbN.
\]

We have the following generalization of~\Cref{thm:conv_in_dist_obsErr}.
\begin{theorem}\label{thm:bdd_in_dist_obsErr-piececonst}
    Given $\sigma \in \mathbb{R}^p$ and $\widehat \sigma_i \in \mathbb{R}^p$, $i\in\bbN$, consider system \eqref{eq:timeDiscSys_diffsigma} with the assumptions in \Cref{thm:cpl_contraction} holding in a neighborhood of~$\sigma$. Initialize the virtual twin with $\widehat{y}_0 = m_0$.
     Then, the iterates $X_j$ of \eqref{eq:timeDiscSys_diffsigma} are  normally distributed at each iteration level, and $X_j \sim \mathcal{N}(\Phi_{j|0} \bigl[\begin{smallmatrix} m_0 \\ 0 \end{smallmatrix}\bigr] ,\mathbf{\Pi}_j)$, where
    \begin{align*}
   & \Phi_{0|i}=\Id; \quad \Phi_{m+1|i}\coloneqq F_{i+m}\Phi_{m|i},\quad &&i\in\bbN,\quad m\in\bbN;\\
           & \mathbf{\Pi}_j = \Phi_{j|0} \, \mathbf{\Pi}_0 \, \Phi_{j|0}^\top
        +
        \sum_{k=0}^{j-1} \Phi_{j-1-k| k}
        \,  \bigl[\begin{smallmatrix} 0\\ L_{\widehat \sigma_k} \end{smallmatrix}\bigr] {\Gamma_k}  \bigl[\begin{smallmatrix} 0\\ L_{\widehat \sigma_k} \end{smallmatrix}\bigr]^\top \, \Phi_{j-1-k|k}^\top,\qquad &&j\in\bbN_+.
      \end{align*}
    Further, there exists a matrix norm~$\dnorm{\cdot}{}$ and~$\lambda<1$ such that, for every~$\widehat\sigma_j$ in a small neighborhood of~$\sigma$,
    \begin{equation}\label{eq:kk1-piececonst}
    \begin{split}
    & \dnorm{\Phi_{j|0}\bigl[\begin{smallmatrix} m_0\\ 0\end{smallmatrix}\bigr]}{}\le \lambda^j \dnorm{\bigl[\begin{smallmatrix} m_0\\ 0\end{smallmatrix}\bigr]}{};\\
       & \dnorm{\mathbf{\Pi}_j}{}\le \lambda^{2j} \dnorm{\mathbf{\Pi}_0}{}+\frac1{1-\lambda^2}  \sup_{k\in\bbN} \dnorm{\bigl[\begin{smallmatrix} 0\\ L_{\widehat \sigma_k} \end{smallmatrix}\bigr] {\Gamma_k}  \bigl[\begin{smallmatrix} 0\\ L_{\widehat \sigma_k} \end{smallmatrix}\bigr]^\top}{} .
       \end{split}
    \end{equation}
\end{theorem}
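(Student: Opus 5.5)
The plan is to argue in two stages: first derive the distributional representation by unrolling the time-varying recursion, then obtain the bounds from a single induced norm in which every $F_j$ is a strict contraction with a common rate.

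\textbf{Representation.} With $\widehat\sigma$ piecewise constant, \eqref{eq:iter} becomes the time-varying recursion
\[
X_{j+1}=F_jX_j-\bigl[\begin{smallmatrix}0\\ L_{\widehat\sigma_j}\end{smallmatrix}\bigr]\eta_j .
\]
Unrolling it and using the transition matrices $\Phi_{m|i}$ gives
\[
X_j=\Phi_{j|0}X_0-\sum_{k=0}^{j-1}\Phi_{j-1-k|k+1}\bigl[\begin{smallmatrix}0\\ L_{\widehat\sigma_k}\end{smallmatrix}\bigr]\eta_k .
\]
The indexing convention of the statement, $\Phi_{j-1-k|k}$, should be matched to this up to the shift in $k$. $X_j$ is an affine image of the independent Gaussians $y_0,\eta_0,\dots,\eta_{j-1}$, so it is Gaussian. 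Its mean is $\Phi_{j|0}\bigl[\begin{smallmatrix}m_0\\0\end{smallmatrix}\bigr]$, because $\widehat y_0=m_0$ and the noise has zero mean. Its covariance is $\Phi_{j|0}\mathbf{\Pi}_0\Phi_{j|0}^\top$ plus the sum of the noise terms. The cross terms vanish by independence. $\mathbf{\Pi}_0$ is the covariance of $(y_0,y_0-m_0)$, which is exactly \eqref{eq:Pi_0}. This repeats the proof of \Cref{thm:conv_in_dist_obsErr} with $F^j$ replaced by products.

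\textbf{Uniform contraction.} The key step is to find one norm $\dnorm{\cdot}{}$ on $\mathbb{R}^{2n}$ and one $\lambda<1$ such that $\dnorm{F(\sigma,\varsigma)}{}\le\lambda$ for all $\varsigma$ in a neighborhood of $\sigma$. \Cref{thm:cpl_contraction} gives $\varrho(F(\sigma,\varsigma))<1$ for each $\varsigma$, but a different adapted norm for each $\varsigma$ does not control products of different $F_j$. This is the switching obstacle mentioned before the statement. I would avoid it as follows.
\begin{itemize}
\item At $\varsigma=\sigma$ we have $\varrho(F(\sigma,\sigma))<1$. By \cite[Sect.~6.9, (6.9.2) Thm.]{StoBul93} there is a vector norm whose induced norm satisfies $\dnorm{F(\sigma,\sigma)}{}<1$. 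Concretely, one can take a weighted norm on $(y,e)$ built from the norms $\dnorm{\cdot}{\dagger},\dnorm{\cdot}{*}$ for the diagonal blocks, scaling the $e$-component to absorb the off-diagonal term $-\clB_\sigma K_\sigma$.
\item Set $\lambda:=\tfrac12\bigl(1+\dnorm{F(\sigma,\sigma)}{}\bigr)$.
\item By continuity of $\varsigma\mapsto F(\sigma,\varsigma)$, which follows from the assumed continuity of $\clA_\varsigma,\clB_\varsigma,K_\varsigma,L_\varsigma$, there is a neighborhood $\clI'$ with $\dnorm{F(\sigma,\varsigma)}{}\le\lambda$ for all $\varsigma\in\clI'$.
\end{itemize}
Submultiplicativity of the induced norm then gives $\dnorm{\Phi_{m|i}}{}\le\lambda^m$ whenever all $\widehat\sigma_k\in\clI'$. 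The first inequality in \eqref{eq:kk1-piececonst} follows immediately.

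\textbf{Covariance bound.} For matrices I would use $\dnorm{M}{}$ as the induced norm, which also bounds $\dnorm{\Phi M\Phi^\top}{}$. Here there is a subtlety: $\Phi^\top$ is not controlled by the same induced norm in general. I would resolve this by using a Euclidean-type norm, i.e.\ choosing a Lyapunov weight $P\succ0$ with $F(\sigma,\sigma)^\top PF(\sigma,\sigma)\prec P$. I would then take $\dnorm{x}{}=|P^{1/2}x|$ and define the matrix norm $\dnorm{M}{}:=|P^{1/2}MP^{1/2}|_2$. Then
\[
\dnorm{\Phi M\Phi^\top}{}=\bigl|(P^{1/2}\Phi P^{-1/2})(P^{1/2}MP^{1/2})(P^{1/2}\Phi P^{-1/2})^\top\bigr|_2\le\lambda^2\dnorm{M}{},
\]
provided $|P^{1/2}F_jP^{-1/2}|_2\le\lambda$. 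That condition again holds near $\sigma$ by continuity. With this choice, the triangle inequality gives
\[
\dnorm{\mathbf{\Pi}_j}{}\le\lambda^{2j}\dnorm{\mathbf{\Pi}_0}{}+\sum_{k=0}^{j-1}\lambda^{2(j-1-k)}\sup_k\dnorm{\cdots}{},
\]
and the geometric sum is bounded by $1/(1-\lambda^2)$. Choosing this compatible Lyapunov-weighted norm, so that both $\Phi$ and $\Phi^\top$ contract, is the main technical point. Everything else is bookkeeping.
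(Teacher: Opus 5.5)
Your proposal is correct and follows essentially the paper's route. You unroll the time-varying recursion and get $\Phi_{j-1-k|k+1}$, which is also what the paper's proof obtains, so the $\Phi_{j-1-k|k}$ in the statement is a misprint rather than a convention; you then take a norm with $\dnorm{F(\sigma,\sigma)}{}<1$, extend it to $\dnorm{F(\sigma,\varsigma)}{}\le\lambda$ near $\sigma$ by continuity, and sum the geometric series.

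Your Lyapunov-weighted choice $\dnorm{M}{}=|P^{1/2}MP^{1/2}|_2$ is a real gain in rigor. The paper takes an arbitrary induced norm from Stoer--Bulirsch and simply writes $\dnorm{\Phi\mathbf{\Pi}\Phi^\top}{}\le\dnorm{\Phi}{}^2\dnorm{\mathbf{\Pi}}{}$, which needs exactly the transpose compatibility you build in; for a general induced norm this only holds up to a norm-equivalence constant.
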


\begin{proof}
    From the analogue to~\eqref{eq:iter}, 
\begin{align}\label{eq:iter-pcest}
   X_0= \begin{bmatrix}y_0\\y_0 - \widehat{y}_0\end{bmatrix},\qquad X_{j+1} = F(\sigma,\widehat \sigma_j) \, X_j - \begin{bmatrix} 0 \\ L_{\widehat{\sigma}_j}\end{bmatrix} \eta_j, \quad j\in\bbN,
\end{align}
  we find, for~$j\ge1$,
    \begin{align*}
        X_{j+1}
        &=
        F(\sigma,\widehat \sigma_j) \left(F(\sigma,\widehat \sigma_{j-1}) \, X_{j-1} - \begin{bmatrix} 0 \\ L_{\widehat{\sigma}_{j-1}}\end{bmatrix} \eta_{j-1}\right)-\begin{bmatrix} 0 \\ L_{\widehat{\sigma}_{j}}\end{bmatrix} \eta_{j}\\
        &=
        F(\sigma,\widehat \sigma_j) F(\sigma,\widehat \sigma_{j-1}) \, X_{j-1} - F(\sigma,\widehat \sigma_j)\begin{bmatrix} 0 \\ L_{\widehat{\sigma}_{j-1}}\end{bmatrix} \eta_{j-1}-\begin{bmatrix} 0 \\ L_{\widehat{\sigma}_{j}}\end{bmatrix} \eta_{j}\\
        &=
        \Phi_{2|j-1} \, X_{j-1} - \Phi_{1|j}\begin{bmatrix} 0 \\ L_{\widehat{\sigma}_{j-1}}\end{bmatrix} \eta_{j-1}-\begin{bmatrix} 0 \\ L_{\widehat{\sigma}_{j}}\end{bmatrix} \eta_{j}.
    \end{align*}
    Repeating the argument, for~$j\ge2$,
    \begin{align*}
        X_{j+1}
        &=
        \Phi_{3|j-2} \, X_{j-2} - \Phi_{2|j-1}\begin{bmatrix} 0 \\ L_{\widehat{\sigma}_{j-2}}\end{bmatrix} \eta_{j-2} - \Phi_{1|j}\begin{bmatrix} 0 \\ L_{\widehat{\sigma}_{j-1}}\end{bmatrix} \eta_{j-1}-\begin{bmatrix} 0 \\ L_{\widehat{\sigma}_{j}}\end{bmatrix} \eta_{j}\\
        &=\Phi_{j+1|0} \, X_{0} - \sum_{k=0}^{j}\Phi_{k|j-k+1}\begin{bmatrix} 0 \\ L_{\widehat{\sigma}_{j-k}}\end{bmatrix} \eta_{j-k}.
    \end{align*}
Following again the arguments in \cite[Lem.~4.2, Rem.~3, Eq.~(4.18)]{soed}, now in the context of time-varying systems, we find~$X_{j}=\clN(M_{j},\mathbf{\Pi}_{j}),$ where
\begin{align*}
M_{j}&=\Phi_{j|0} \bigl[\begin{smallmatrix} m_0\\ 0\end{smallmatrix}\bigr],
\qquad
R_{j}=\bigl[\begin{smallmatrix} 0\\ L_{\widehat \sigma_j} \end{smallmatrix}\bigr] {\Gamma_j}  \bigl[\begin{smallmatrix} 0\\ L_{\widehat \sigma_j} \end{smallmatrix}\bigr]^\top,
\intertext{and}
 \mathbf{\Pi}_{j}&= \Phi_{1|j-1}\, \mathbf{\Pi}_{j-1} \, \Phi_{1|j-1}^\top + R_{j-1}= \Phi_{2|j-2} \, \mathbf{\Pi}_{j-2} \, \Phi_{2|j-2}^\top + \Phi_{1|j-1} \, R_{j-2} \,\Phi_{1|j-1}^\top +R_{j-1}\\
 &= \Phi_{3|j-3} \, \mathbf{\Pi}_{j-3} \, \Phi_{3|j-3}^\top + \Phi_{2|j-2} \, R_{j-3} \,\Phi_{2|j-3}^\top + \Phi_{1|j-1} \, R_{j-2} \,\Phi_{1|j-1}^\top +R{j-1},\\
 &= \Phi_{j|0} \, \mathbf{\Pi}_{0} \, \Phi_{j|0}^\top +\sum_{k=0}^{j-1} \Phi_{j-k-1|k+1} \, R_{k} \,\Phi_{j-k-1|k+1}^\top.
\end{align*}

By \Cref{thm:cpl_contraction} we have that~$\varrho(F(\sigma,\sigma))<1$, which implies that there exists an induced matrix~$\dnorm{\cdot}{}$  such that~$\dnorm{F(\sigma,\sigma))}{}<1$. By continuity,~$\dnorm{F(\sigma,\varsigma))}{}\leq \lambda <1$ for every~$\varsigma$ in a small neighborhood of~$\sigma$, where $\lambda < 1$. Therefore, for~$(\widehat\sigma_j)_{j\in\bbN}$ contained in that neighborhood of~$\sigma$, for the mean we obtain
\[
 \dnorm{M_j}{}\le\dnorm{\Phi_{j|0}}{}\dnorm{\bigl[\begin{smallmatrix} m_0\\ 0\end{smallmatrix}\bigr]}{}\le\lambda^j\dnorm{\bigl[\begin{smallmatrix} m_0\\ 0\end{smallmatrix}\bigr]}{},
\]
and for the covariance,
\begin{align*}
 \dnorm{\mathbf{\Pi}_j}{}&\le
\dnorm{\Phi_{j|0}}{}^2\dnorm{\mathbf{\Pi}_{0}}{} +\sum_{k=0}^{j-1}\dnorm{ \Phi_{j-k-1|k+1}}{}^2 \, \dnorm{R_{k}}{}\le
\lambda^{2j}\dnorm{\mathbf{\Pi}_{0}}{} +\sum_{k=0}^{j-1}\lambda^{2(j-k-1)}\dnorm{R_{k}}{}\\
&\le
\lambda^{2j}\dnorm{\mathbf{\Pi}_{0}}{} +\frac{1}{1-\lambda^2}\sup_{k\in\bbN}\dnorm{R_{k}}{},
\end{align*}
which finishes the proof.
\end{proof}

In practice, the parameter will not be updated at every time step, but only at multiples $t = \kappa s$ of $s \in \mathbb{N}_+$, $\kappa \in \mathbb{N}$, cf., the bullet points in \Cref{fig:init_est} below.

\section{Parameter estimation}
\label{sec:param}
For \Cref{thm:conv_in_dist_obsErr} above, it is essential to have a good estimate~$\widehat{\sigma}$ of the unknown parameter~$\sigma \in \mathbb{R}^p$. We adopt a Bayesian perspective: starting with a prior distribution of the unknown parameter, we update this prior belief sequentially based on new data from the controlled physical-virtual-twin system. We denote the estimate after the $k$-th update by $\widehat{\sigma}_k$, resulting in a sequence of estimates $(\widehat{\sigma}_k)_{k
\ge0}$. Each $\widehat{\sigma}_k$ is computed as the posterior mean from a Bayesian inverse problem using time-discrete, noisy, partial observations of the physical twin. This results in two nested time grids, as illustrated in \Cref{fig:init_est}: a sequence of parameter estimation intervals indexed by~$k\ge1$, each of which contains a sequence of observations~$z_j$, $j=0,\ldots,s$. For each Bayesian update, a distribution of the unknown initial state $y_0^{(k)}$ conditioned on $\sigma$ in the corresponding interval is required. It turns out that, this distribution is Gaussian and can be obtained by Kalman filtering as outlined in \Cref{sec:Kalmanfilter} below. This section is structured as follows: we commence with details on a single Bayesian parameter update for our specific model in ~\Cref{sec:singleBayes} and \Cref{sec:forwardmap}. In \Cref{sec:para_est2}, we present an illustrative explanation for the parameter estimation for two consecutive estimation intervals before we explain how the method is applied sequentially for an arbitrary number of estimation intervals in \Cref{sec:para_est_general}.

\subsection{Bayesian inverse problem}\label{sec:singleBayes}
In this and the subsequent subsection, we focus on a single parameter estimation, hence, $k$ and $\widehat{\sigma}^{(k)}$ are fixed. To improve clarity, we omit the superscript notation indicating the $k$-th estimation interval. Let us assume a joint prior measure of the unknown initial condition $y_0$ and the unknown parameter $\sigma$
\begin{align*}
\mu_{\mathrm{prior}}(\mathrm d\sigma,\mathrm dy_0) \,=\,\mu_{\mathrm{prior}}(\mathrm dy_0\mid\sigma)\,\mu_{\mathrm{prior}}(\mathrm d\sigma),
\end{align*}
where $\mu_{\mathrm{prior}}(\mathrm d\sigma)$ is a probability measure on~$\mathbb R^p$ (with Lebesgue density~$\rho_{\rm prior}(\sigma)$),
and $\mu_{\mathrm{prior}}(\mathrm dy_0\mid\sigma)$ is a probability measure on~$\bbR^n$. The joint posterior measure 
on~$(\sigma,y_0)$ given data~$\bsz = \begin{bmatrix}z_0^\top&\ldots &z_s^\top\end{bmatrix}^\top$ is {determined} in the sense of Radon--Nikodym derivatives by
\begin{align*}
\frac{\mathrm d\mu_{\mathrm{post}}(\cdot \mid \bsz)}{\mathrm d\mu_{\mathrm{prior}}}(\sigma, y_0) \;\propto\; \rho_{\mathrm{like}}(\bsz \mid \sigma, y_0),
\end{align*}
provided that $\rho_{\rm like}$ is nonnegative and belongs to $L^1(\mu_{\rm prior})$. Equivalently, we can write
\begin{align*}
\mu_{\mathrm{post}}(\mathrm d\sigma,\mathrm dy_0\mid \bsz)
&=\frac{1}{Z(\bsz)}\,
\rho_{\mathrm{like}}(\bsz\mid\sigma,y_0)\,
\mu_{\mathrm{prior}}(\mathrm dy_0\mid\sigma)\,
\mu_{\mathrm{prior}}(\mathrm d\sigma),
\\
Z(\bsz)
&=\int_{\bbR^p} \int_{\bbR^n} \rho_{\mathrm{like}}(\bsz\mid\sigma,y_0)\,
\mu_{\mathrm{prior}}(\mathrm dy_0\mid\sigma)\,
\mu_{\mathrm{prior}}(\mathrm d\sigma).
\end{align*}
Here, $0<Z(\bsz)<\infty$ is the normalizing constant (sometimes called model evidence). Computing the joint posterior is challenging as 
the forward map~$(\sigma,y_0) \mapsto \bsz$ is nonlinear and only implicitly defined through the solution of the coupled dynamical system, which makes the posterior non-Gaussian. 

For the stabilization problem, the initial condition acts as a nuisance parameter: it is not of direct interest for the output-feedback design but enters the likelihood through the model dynamics and therefore must be accounted for in the inference. For this reason, we pursue inference on~$\sigma$ based on its marginal posterior obtained by integrating out the latent variable~$y_0$
\begin{align*}
\mu_{\mathrm{post}}(\mathrm d\sigma\mid \bsz)
\;=\;
\frac{1}{Z(\bsz)}
\Bigg(
\int_{\bbR^n} \rho_{\mathrm{like}}(\bsz\mid\sigma,y_0)\,
\mu_{\mathrm{prior}}(\mathrm dy_0\mid\sigma)
\Bigg)
\mu_{\mathrm{prior}}(\mathrm d\sigma).
\end{align*}
The inner integral defines the marginal likelihood (see~\eqref{eq:marginalizeInit}) of the current estimation interval
\begin{align*}
\rho_{\rm like}(\bsz\mid \sigma)
\;:=\;
\int_{\bbR^n}
\rho_{\mathrm{like}}(\bsz\mid\sigma,y_0)\,
\mu_{\mathrm{prior}}(\mathrm dy_0\mid\sigma),
\end{align*}
so that the marginal posterior of~$\sigma$ has Lebesgue density
\begin{align}\label{eq:marg_post}
\rho_{\rm post}(\sigma\mid \bsz)
\;=\;
\frac{1}{Z(\bsz)}\,\rho_{\rm like}(\bsz\mid \sigma)\,\rho_{\rm prior}(\sigma),
\quad
Z(\bsz)=\int_{\mathbb R^p}\rho_{\rm like}(\bsz\mid \sigma)\,\rho_{\rm prior}(\sigma)\,\mathrm d\sigma.
\end{align}
Hence, for the inference of the posterior, the marginal likelihood $\rho_{\rm like}$ is required, which in turn requires the conditional prior on the initial condition~$y_0\mid \sigma$. While the prior on~$y_0 \mid \sigma$ is a Gaussian measure on~${\bbR^n}$
\begin{align}\label{eq:Gauss_assump}
y_0\mid \sigma \sim \mathcal N\big(m_0(\sigma),\mathcal C_0(\sigma)\big),
\end{align}
the likelihood~$\rho_{\rm like}$ depends crucially on the forward model, as explained in the subsequent subsection.

\begin{remark}
    At the initialization step, see \Cref{fig:init_est}, we assume that $y_0^{(1)}\sim\mathcal{N}(m_0^{(1)},\mathcal{C}_0^{(1)})$. Thus, for the first parameter estimation, it can be assumed that $y_0^{(1)}$ is independent of $\sigma$. In this case, it holds that
    $$
    \mu_{\mathrm{prior}}(\mathrm dy_0\mid\sigma)\,\mu_{\mathrm{prior}}(\mathrm d\sigma) \,=\,\mu_{\mathrm{prior}}(\mathrm d\sigma)\,\mu_{\rm prior}(\mathrm dy_0).
    $$
    For the $k$-th parameter estimation ($k\ge 2$), the distribution of the initial condition $y_0^{(k)}\mid \sigma$ is a Gaussian determined by the Kalman filter (see \Cref{sec:Kalmanfilter} below), and does depend on the unknown $\sigma$, see~\eqref{eq:Gaussian_mix2}.
\end{remark}

\subsection{The forward mapping}\label{sec:forwardmap}
The representation~\eqref{eq:timeDiscSys_diffsigma1} together with~\eqref{eq:timeDiscSys_diffsigma2} enables us to define the coupled state
\begin{align*}
    x_j := 
    \begin{bmatrix}
        y_j \\
        \widehat y_j
    \end{bmatrix},\quad 0\le j\le s,
\end{align*}
such that~\eqref{eq:timeDiscSys_diffsigma} can be reformulated as the linear time-discrete system
\begin{align}
    x_{j+1}
    &= A_{\mathrm{cpl}}(\sigma,\widehat{\sigma})\,x_j
       + B_{\mathrm{cpl}}(\widehat{\sigma})\,\eta_j, 
       \label{eq:x_aug_disc}
\\[3pt]
    z_j &= C_{\mathrm{cpl}}\,x_j + \eta_j, \label{eq:z_aug_disc}
\end{align}
where $x_0 =\begin{bmatrix}
    y_0^\top & \widehat{y}_0^\top
\end{bmatrix}^\top$ and
\begin{align*}
    A_{\mathrm{cpl}}(\sigma,\widehat{\sigma})
    =
    \begin{bmatrix}
        \clA_\sigma & \clB_\sigma K_{\widehat{\sigma}} \\[3pt]
        -L_{\widehat{\sigma}} C & \clA_{\widehat{\sigma}} + \clB_{\widehat{\sigma}} K_{\widehat{\sigma}} + L_{\widehat{\sigma}} C
    \end{bmatrix}, \qquad
    B_{\mathrm{cpl}}(\widehat{\sigma})
    = 
    \begin{bmatrix}
        0 \\[3pt]
         -L_{\widehat{\sigma}} 
    \end{bmatrix}, 
    \qquad
    C_{\mathrm{cpl}} = 
    \begin{bmatrix}
        C & 0
    \end{bmatrix}.
\end{align*}
Iterating~\eqref{eq:x_aug_disc} yields that
\begin{align*}
    x_j
    = A_{\mathrm{cpl}}^j(\sigma,\widehat{\sigma})\,x_0
      + \sum_{r=0}^{j-1} 
        A_{\mathrm{cpl}}^{\,j-1-r}(\sigma,\widehat{\sigma})\,
        B_{\mathrm{cpl}}(\widehat{\sigma})\,
        \eta_r.
\end{align*}
Substituting into~\eqref{eq:z_aug_disc} and stacking the observations
$\bsz = \begin{bmatrix}
    z_0^\top,\dots,z_s^\top
\end{bmatrix}^\top$ and noises
$\boldsymbol{\eta} = \begin{bmatrix}
    \eta_0^\top,\dots,\eta_s^\top
\end{bmatrix}^\top$
yields that
\begin{align}
    \bsz
    &= G_x(\sigma,\widehat{\sigma})\,x_0 
       + G_{\eta}(\sigma,\widehat{\sigma})\,\boldsymbol{\eta},
       \label{eq:z_stack}
\end{align}
where
\begin{align*}
    G_x(\sigma,\widehat{\sigma})
    &=
    \begin{bmatrix}
        C_{\mathrm{cpl}}\\
        C_{\mathrm{cpl}} A_{\mathrm{cpl}}(\sigma,\widehat{\sigma}) \\
        C_{\mathrm{cpl}} A_{\mathrm{cpl}}^2(\sigma,\widehat{\sigma}) \\
        \vdots \\
        C_{\mathrm{cpl}} A_{\mathrm{cpl}}^{\,s}(\sigma,\widehat{\sigma})
    \end{bmatrix} \in \mathbb{R}^{\ell(s+1) \times 2n},\end{align*}
\begin{align*}    G_{\eta}(\sigma,\widehat{\sigma})
    &=
    \begin{bmatrix}
    {\rm Id}_{\ell} & 0 & 0 & \cdots & 0\\
    C_{\mathrm{cpl}}B_{\mathrm{cpl}}(\widehat{\sigma}) & {\rm Id}_{\ell} & 0 & \cdots & 0  \\
    C_{\mathrm{cpl}}A_{\mathrm{cpl}}(\sigma,\widehat{\sigma})B_{\mathrm{cpl}} & C_{\mathrm{cpl}}B_{\mathrm{cpl}}(\widehat{\sigma}) & {\rm Id}_{\ell} & \ddots & \vdots  \\
    \vdots & \vdots & \ddots & \ddots & 0  \\
    C_{\mathrm{cpl}}A_{\mathrm{cpl}}^{\,s-1}(\sigma,\widehat{\sigma})B_{\mathrm{cpl}}(\widehat{\sigma}) & C_{\mathrm{cpl}}A_{\mathrm{cpl}}^{\,s-2}(\sigma,\widehat{\sigma})B_{\mathrm{cpl}}(\widehat{\sigma}) & \cdots & C_{\mathrm{cpl}}B_{\mathrm{cpl}}(\widehat{\sigma}) & {\rm Id}_{\ell} 
    \end{bmatrix},
\end{align*}%
where $G_\eta(\sigma,\widehat{\sigma}) \in \mathbb{R}^{\ell(s+1) \times \ell(s+1)}$ and ${\rm Id}_\ell$ denotes the identity matrix in~$\bbR^\ell$.

Recall that in this section, we consider a single parameter estimation, for which $\widehat{\sigma}$ is fixed. Furthermore, when using the data $\bsz = \left[z_0{^\top},\ldots,z_s{^\top}\right]^\top$, the virtual twin trajectory at the time points $t_0,\ldots,t_j$ is determined. In particular, $\widehat{y}_0$ is known. So, according to~\eqref{eq:Gauss_assump}, given $\sigma$, the coupled initial state is Gaussian 
$$x_0 =\begin{bmatrix}y_{0} \\ \widehat{y}_{0} \end{bmatrix} \sim \mathcal{N}\left(\begin{bmatrix}m_0(\sigma) \\ \widehat{y}_0\end{bmatrix}, \begin{bmatrix} \mathcal{C}_0(\sigma) & 0 \\ 0 & 0 \end{bmatrix}\right).$$ 
By linearity, we conclude that
\begin{align*}
    G_x(\sigma,\widehat{\sigma}) x_0 &\sim \mathcal{N}\left(G_x(\sigma,\widehat{\sigma})\begin{bmatrix} m_0(\sigma) \\  \widehat{y}_0\end{bmatrix}, G_x(\sigma,\widehat{\sigma})\begin{bmatrix}  \mathcal{C}_0(\sigma)   & 0 \\ 0 & 0 \end{bmatrix}G_x(\sigma,\widehat{\sigma})^\top\right),\\
    G_{\eta}(\sigma,\widehat{\sigma}) \bseta &\sim \mathcal{N} \left(0 , G_{\eta}(\sigma,\widehat{\sigma}) \bsGammas G_{\eta}(\sigma,\widehat{\sigma})^\top \right),
\end{align*}
where $\bsGammas$ is the block diagonal matrix containing the noise covariances $\Gamma_0,\ldots,\Gamma_s$. From~\eqref{eq:z_stack}, we conclude that 
$$
\bsz \mid \sigma \sim \mathcal{N}\left( \bar \bsz_{\rm cpl}(\sigma,\widehat{\sigma}), \boldsymbol{\Gamma}_{\eta}(\sigma,\widehat{\sigma})\right),
$$  
where
\begin{align*}
\bar \bsz_{\rm cpl}(\sigma,\widehat{\sigma}) &:= G_x(\sigma,\widehat{\sigma})\begin{bmatrix} m_0(\sigma) \\  \widehat{y}_0\end{bmatrix},\\
\boldsymbol{\Gamma}_{\eta}(\sigma,\widehat{\sigma}) &:= G_x(\sigma,\widehat{\sigma})\begin{bmatrix}  \mathcal{C}_0(\sigma)   & 0 \\ 0 & 0 \end{bmatrix}G_x(\sigma,\widehat{\sigma})^\top + G_{\eta}(\sigma,\widehat{\sigma}) \bsGammas G_{\eta}(\sigma,\widehat{\sigma})^\top.
\end{align*}
Then, the marginal likelihood of the data~$\bsz$ given~$\sigma$ is the Gaussian density
\begin{align}\label{eq:rholikenew}
\rho_{\mathrm{like}}(\bsz \mid\sigma)
= \frac{\exp\!\Big(-\tfrac12\big(\bsz-\bar \bsz_{\rm cpl}(\sigma,\widehat{\sigma})\big)^\top
\boldsymbol{\Gamma}_{\eta}(\sigma,\widehat{\sigma})^{-1}\big(\bsz-\bar \bsz_{\rm cpl}(\sigma,\widehat{\sigma})\big)\Big)}{\sqrt{(2\pi)^{-s\ell}\,\det\!\big(\boldsymbol{\Gamma}_{\eta}(\sigma,\widehat{\sigma})\big)}}.
\end{align}

In the $k$-th parameter estimation step, this marginal likelihood $\rho_{\rm like}$ is the likelihood $\rho_{\rm like}^{(k)}(\bsz^{(k)} \mid \bsz^{(1)},\ldots,\bsz^{(k-1)},\sigma)$ in \eqref{eq:posterior-density}, which is used to sample from the posterior distribution of the unknown parameter in the $k$-th parameter estimation problem. This corresponds to step 8 in \Cref{Alg:Offl}.

\subsection{Parameter estimation: two step illustration}\label{sec:para_est2}
To illustrate the parameter estimation procedure introduced in this section, we present the method for two subsequent parameter estimation steps choosing $s+1=6$ observations in each estimation interval.
\begin{figure}[H]
\centering
\vspace{4.95cm}
\begin{minipage}{\linewidth}
\begin{tikzpicture}
  \begin{axis}[
    xmin=-2, xmax=12,
    ymin=-17, ymax=15,
    samples=200,
    axis x line=none,
    axis y line=none,
    width=\textwidth,
    height=6cm,
    scale only axis,
    clip = false
  ]
    \addplot[very thick,black,domain=0:10] {0};
    
    \addplot[only marks, mark=*, mark size=3pt, ultra thick] coordinates {(0,0)};    
    \addplot[only marks, mark=*, mark size=3pt, ultra thick] coordinates {(5,0)};
    \addplot[only marks, mark=*, mark size=3pt, ultra thick] coordinates {(10,0)};
    \node[anchor=north] at (axis cs:0,-11) {\footnotesize$y_0^{(1)}\sim \mathcal{N}(m_0^{(1)},\mathcal{C}_0^{(1)})$};
    \node[anchor=north] at (axis cs:5,-11) {\footnotesize$y_0^{(2)}\sim \mathcal{N}(m_0^{(2)},\mathcal{C}_0^{(2)})$};
    \node[anchor=north] at (axis cs:10,-11) {\footnotesize$y_0^{(3)}\sim \mathcal{N}(m_0^{(3)},\mathcal{C}_0^{(3)})$};

    \node[anchor=north] at (axis cs:0,-7.5) {\footnotesize${\sigma}^{(0)}\sim \mu_{\rm prior}(\mathrm d\sigma)$};
    \node[anchor=north] at (axis cs:5,-7.5) {\footnotesize${\sigma}^{(1)}\sim \mu_{\rm post}^{(1)}(\mathrm d\sigma\mid \bsz^{(1)})$};
    \node[anchor=north] at (axis cs:10,-7.5) {\footnotesize${\sigma}^{(2)}\sim \mu_{\rm post}^{(2)}(\mathrm d\sigma\mid \bsz^{(2)})$};

    \node[anchor=north] at (axis cs:0,-4) {\footnotesize$\widehat{\sigma}^{(0)} = \mathbb{E}_{\mu_{\rm prior}}[\sigma]$};
    \node[anchor=north] at (axis cs:5,-4) {\footnotesize$\widehat{\sigma}^{(1)} = \mathbb{E}_{\mu_{\rm post}^{(1)}}[\sigma]$};
    \node[anchor=north] at (axis cs:10,-4) {\footnotesize$\widehat{\sigma}^{(2)} = \mathbb{E}_{\mu_{\rm post}^{(2)}}[\sigma]$};

    \foreach \r in {1,2,3,4,6,7,8,9} {
    \addplot[only marks, mark=|, mark size=3pt, thick] coordinates {(\r,0)};  
    }

    \node[anchor=north] at (axis cs:0,-15) {\footnotesize$\widehat{y}_0^{(1)} = m_0^{(1)}$};
    \node[anchor=north] at (axis cs:5,-15) {\footnotesize$\widehat{y}_0^{(2)} = \widehat{y}_5^{(1)}$};
    \node[anchor=north] at (axis cs:10,-15) {\footnotesize$\widehat{y}_0^{(3)} = \widehat{y}_5^{(2)}$};

    \node[anchor=north] at (axis cs:0,-.5) {\footnotesize$y_0^{(1)}$};    
    \node[anchor=north] at (axis cs:5,-.5) {\footnotesize$y_0^{(2)}$};
    \node[anchor=north] at (axis cs:10,-.5) {\footnotesize$y_0^{(3)}$};

    \node[anchor=south] at (axis cs:0,.3) {\footnotesize$z_0^{(1)}$};
    \node[anchor=south] at (axis cs:1,.3) {\footnotesize$z_1^{(1)}$};
    \node[anchor=south] at (axis cs:2,.3) {\footnotesize$z_2^{(1)}$};
    \node[anchor=south] at (axis cs:3,.3) {\footnotesize$z_3^{(1)}$};
    \node[anchor=south] at (axis cs:4,.3) {\footnotesize$z_4^{(1)}$};
    \node[anchor=south] at (axis cs:5,.3) {\footnotesize$z_5^{(1)}\!\!=\!z_0^{(2)}$};

    \node[anchor=south] at (axis cs:6,.3) {\footnotesize$z_1^{(2)}$};
    \node[anchor=south] at (axis cs:7,.3) {\footnotesize$z_2^{(2)}$};
    \node[anchor=south] at (axis cs:8,.3) {\footnotesize$z_3^{(2)}$};
    \node[anchor=south] at (axis cs:9,.3) {\footnotesize$z_4^{(2)}$};
    \node[anchor=south] at (axis cs:10,.3) {\footnotesize$z_5^{(2)}\!\!=\!z_0^{(3)}$};

    \node[anchor=south] at (axis cs:1.5,5) {\footnotesize $\begin{bmatrix}y_{j+1}^{(1)} \\ \widehat{y}_{j+1}^{(1)} \end{bmatrix} = A_{\rm cpl}(\sigma,\widehat{\sigma}^{(0)}) \begin{bmatrix}y_{j}^{(1)} \\ \widehat{y}_{j}^{(1)} \end{bmatrix} + B_{\rm cpl}(\widehat{\sigma}^{(0)}) \eta_j^{(1)}
    $};

    \node[anchor=south] at (axis cs:8.5,5) {\footnotesize $\begin{bmatrix}y_{j+1}^{(2)} \\ \widehat{y}_{j+1}^{(2)} \end{bmatrix} = A_{\rm cpl}(\sigma,\widehat{\sigma}^{(1)}) \begin{bmatrix}y_{j}^{(2)} \\ \widehat{y}_{j}^{(2)} \end{bmatrix} + B_{\rm cpl}(\widehat{\sigma}^{(1)}) \eta_j^{(2)}
    $};
  \end{axis}
\end{tikzpicture}
\end{minipage}
\vspace{-0.5cm}
\caption{Illustration for $k=1,2$.}
\label{fig:init_est}
\end{figure}

Our strategy is as follows:
\begin{itemize}[wide=0pt]
    \item[\underline{Initialization.}] We are at the most left point in \Cref{fig:init_est}. We are given $\mu_{\rm prior}(\mathrm d\sigma)$ and $y_0^{(1)} \sim \mathcal{N}(m_0^{(1)},\mathcal{C}_0^{(1)})$ by assumption. We initialize $\widehat{y}_0^{(1)} = m_0^{(1)}$ and $\widehat{\sigma}^{(0)} = \mathbb{E}_{\mu_{\rm prior}}[\sigma]$ and run the coupled system characterized by $(A_{\rm cpl}(\sigma,\widehat{\sigma}^{(0)}),B_{\rm cpl}(\widehat{\sigma}^{(0)}))$ for $s=5$ steps. While the system is running, we collect the performed measurements $\bsz^{(1)} = \left[z_0^{(1)\top},\ldots,z_5^{(1)\top}\right]^\top$.
    \item[\underline{1st estimation.}] We are at the center in \Cref{fig:init_est}. We are given $y_0^{(1)} \sim \mathcal{N}(m_0^{(1)},\mathcal{C}_0^{(1)})$ and $\mu_{\rm prior}(\mathrm d\sigma)$. Using this, we can sample from the posterior $\sigma^{(1)} \sim \mu_{\rm post}^{(1)}(\mathrm d\sigma \mid \bsz^{(1)})$ (marginalized over $y_0^{(1)}$) as detailed in \Cref{sec:singleBayes} (and in \Cref{sec:para_est_general} later on). We set
    $$    \widehat{\sigma}^{(1)} = \mathbb{E}_{\mu_{\rm post}^{(1)}}[\sigma].
    $$
    At this point, $\widehat{y}_j^{(1)}$ are known for $j \in \{0,\ldots,5\}$. They will be used to determine the distribution of the initial condition $y_0^{(2)}$ of the subsequent interval. We update $(A_{\rm cpl}(\sigma,\widehat{\sigma}^{(1)}),B_{\rm cpl}(\widehat{\sigma}^{(1)}))$ and run the coupled system $s=5$ additional steps. While the system is running, we  collect measurements $\bsz^{(2)} = \left[z_0^{(2)\top},\ldots,z_5^{(2)\top} \right]^\top$.
    \item[\underline{2nd estimation.}] We are at the most right point in \Cref{fig:init_est}. As before, updating the parameter
    $$
    \widehat{\sigma}^{(2)} = \mathbb{E}_{\mu_{\rm post}^{(2)}}[\sigma],
    $$
    requires samples from the posterior $\mu_{\rm post}^{(2)}(\mathrm d\sigma\mid \bsz^{(2)})$. For this purpose, we use the posterior from the previous iteration $\mu_{\rm post}^{(1)}(\mathrm d\sigma\mid \bsz^{(1)})$ as prior and marginalize over the initial condition $y_0^{(2)}$. To this end, we use Kalman filtering to obtain the conditional distribution $\mu(\mathrm dy_0^{(2)} \mid \bsz^{(1)}, \sigma)$ of $y_0^{(2)}$ given $\sigma$, and the data that is available at this point $\bsz^{(1)}$, i.e., the Kalman filter propagates $m_0^{(k)}$ and $\mathcal{C}_0^{(k)}$. Below, in \Cref{sec:Kalmanfilter}, we provide more details on the Kalman filter. At this point, the virtual twin trajectory is determined completely.

    For more than two estimation intervals, the exact same strategy is repeated: we update the matrices $(A_{\rm cpl}(\sigma,\widehat{\sigma}^{(2)}),B_{\rm cpl}(\widehat{\sigma}^{(2)}))$ and run the coupled system $s=5$ additional steps. While the system is running, we collect measurements $\bsz^{(3)} = \left[z_0^{(3)\top},\ldots,z_5^{(3)\top}\right]^\top$, which we will use in the next parameter estimation, and repeat the steps described above. The repeated application of this method for an arbitrary number of estimation intervals is described in the subsequent subsection.
\end{itemize}

\subsection{Parameter estimation: general case}\label{sec:para_est_general}
Suppose we are at time $t_j$, $j = sk$ for $k\ge 2$ (for $k=2$, this is the most right point in \Cref{fig:init_est}). At this point, we wish to estimate $\widehat{\sigma}^{(k)}$, and we are given a distribution~$\mu_{\rm post}^{(k-1)}(\mathrm d\sigma \mid \bsz^{(k-1)})$ from the previous estimation interval, all measurements $\bsz^{(i)}$, $i \le k$, and all values of the virtual twin up to this point. Let us denote the virtual twin evolution in the $k$-th interval by $\widehat{\boldsymbol{y}}^{(k)} = (\widehat{y}_0,\ldots,\widehat{y}_{s-1})$ and let us point out that the virtual twin is a deterministic function of the observations, see~\eqref{eq:timeDiscSys_diffsigma2}. Thus, given $\bsz^{(k-1)}$ and the initial condition~$\widehat{y}_0$, the virtual twin $\widehat{\bsy}^{(k-1)}$ is fully determined. Conditioning on the virtual twin $\widehat{\bsy}^{(1)},\ldots,\widehat{\bsy}^{(k-1)}$ in addition to~$\bsz^{(1)},\ldots,\bsz^{(k-1)}$ provides no additional information compared to conditioning on~$\bsz^{(1)},\ldots,\bsz^{(k-1)}$ only, that is
$$
\rho^{(k)}_{\rm like}(\bsz^{(k)} \mid \bsz^{(1)},\ldots,\bsz^{(k-1)}, \sigma, \widehat{\bsy}^{(1)},\ldots,\widehat{\bsy}^{(k-1)}) = \rho^{(k)}_{\rm like}(\bsz^{(k)} \mid \bsz^{(1)},\ldots,\bsz^{(k-1)}, \sigma).
$$
Therefore, conditioning on~$\widehat{\bsy}^{(1)},\ldots,\widehat{\bsy}^{(k-1)}$ will be omitted in the following.

By the law of total probability we have that
\begin{align*}
&\rho^{(k)}_{\rm like}(\bsz^{(k)} \mid \bsz^{(1)},\ldots,\bsz^{(k-1)}, \sigma)
\\\quad&=
\int_{\bbR^n} 
\rho(\bsz^{(k)} \mid y_{0}^{(k)}, \bsz^{(1)},\ldots,\bsz^{(k-1)}, \sigma)\,\rho_{\rm Kal}(y_{0}^{(k)} \mid \bsz^{(1)},\ldots,\bsz^{(k-1)}, \sigma)\,
\mathrm dy_{0}^{(k)}.
\end{align*}
The second term, $\rho_{\rm Kal}(y_{0}^{(k)} \mid \bsz^{(1)},\ldots,\bsz^{(k-1)},\sigma)$, is obtained by the Kalman filter as presented in \Cref{sec:Kalmanfilter} below. According to the measurement model, once the state $y_j$ and the unknown parameter $\sigma$ are known, the observation $z_j$ does not depend on past states, the virtual twin trajectory, and past observations. Hence, the first term in the likelihood simplifies to
$$
\rho^{(k)}(\bsz^{(k)} \mid y_{0}^{(k)}, \bsz^{(1)},\ldots,\bsz^{(k-1)}, \sigma)
=
\rho(\bsz^{(k)} \mid y_{0}^{(k)}, \sigma),
$$
which yields that
\begin{align}\label{eq:marginalizeInit}
\rho_{\rm like}^{(k)}(\bsz^{(k)} \mid \bsz^{(1)},\ldots,\bsz^{(k-1)}, \sigma)=
\int_{\bbR^n}
\rho(\bsz^{(k)} \mid y_{0}^{(k)}, \sigma)\,
\rho_{\rm Kal}(y_{0}^{(k)} \mid \bsz^{(1)},\ldots,\bsz^{(k-1)}, \sigma )\,
\mathrm dy_{0}^{(k)}.
\end{align}

Using the likelihood density of future observations conditioned on past data and the unknown parameter
$$
\rho_{\rm like}^{(k)}(\bsz^{(k)} \mid \bsz^{(1)},\ldots,\bsz^{(k-1)}, \sigma),
$$
we can construct the posterior density of $\sigma$, and given all observations $\bsz^{(1)},\ldots,\bsz^{(k)}$ using Bayes' theorem
\begin{align}
\label{eq:posterior-density}
\rho_{\rm post}^{(k)}(\sigma \mid \bsz^{(1)},\ldots,\bsz^{(k)}) = \frac{\rho_{\rm like}^{(k)}(\bsz^{(k)} \mid \bsz^{(1)},\ldots,\bsz^{(k-1)}, \sigma) \,
\rho_{\rm post}^{(k-1)}(\sigma \mid \bsz^{(1)},\ldots,\bsz^{(k-1)})}{\rho(\bsz^{(k)} \mid \bsz^{(1)},\ldots,\bsz^{(k-1)})},
\end{align}
where the posterior from the previous step $\rho_{\rm post}^{(k-1)}(\sigma \mid \bsz^{(1)},\ldots,\bsz^{(k-1)})$ serves as prior for the $k$-th update, and $\rho_{\rm post}^{(0)} = \rho_{\rm prior}$.

\section{Estimation of the initial state distribution using Kalman-filtered Gaussians}\label{sec:Kalmanfilter}

\subsection{Kalman filtering on multiple intervals}
With reference to \Cref{fig:init_est}, we refer to the intervals between the big bullets as estimation intervals. Each of these is divided into $s$ time steps at which data are collected. For the parameter estimation at time $t_j$ with $j = (k-1)s$ for $k\ge 2$, a distribution of the unknown initial condition of the physical system $y_0^{(k)}$ given all available data $\bsz^{(1)},\ldots,\bsz^{(k-1)}$ and the unknown parameter $\sigma$ is required, see~\eqref{eq:marginalizeInit}. For the estimation of the first $\widehat{\sigma}^{(k)}$, i.e., with $k=1$, we assume that $y_0 = y_0^{(1)} \sim \mathcal{N}(m_0^{(1)},\mathcal{C}_0^{(1)})$. For the subsequent estimations of $\widehat{\sigma}^{(k)}$, we utilize the distribution
\begin{align}\label{eq:Gaussian_mix2}
\mu(\mathrm dy_{(k-1)s} \mid \bsz^{(1)},\ldots, \bsz^{(k-1)}, \sigma) = \mathcal{N}(m_0^{(k)}(\sigma), \mathcal{C}_0^{(k)}(\sigma)),
\end{align}
with mean $m_0^{(k)}(\sigma) = 
m_{y\mid \widehat{y};s}^{(k-1)}(\sigma)$ and covariance $\mathcal{C}_0^{(k)}(\sigma) = C_{y \mid \widehat{y};s}^{(k-1)}(\sigma)$ to be defined in \Cref{sec:KalmanFilter}.
In this subsection and in \Cref{sec:KalmanFilter} below we explain how this distribution can be computed using the Kalman filter on every parameter estimation interval.
We denote the Lebesgue density of $\mu(\mathrm dy_{(k-1)s}\mid \bsz^{(1)},\ldots, \bsz^{(k-1)}, \sigma)$ by $\rho_{\rm Kal}(y_0^{(k)} \mid \bsz^{(1)},\ldots,\bsz^{(k-1)},\sigma)$, see~\eqref{eq:marginalizeInit}.

By the law of total probability, conditioning on the previous interval's initial state $y_0^{(k-1)}$ gives
\begin{align}\label{eq:multi_int_Kalman}
&\rho_{\rm Kal}\big(y_0^{(k)} \mid \bsz^{(1)},\ldots,\bsz^{(k-1)}, \sigma \big)\\
&\quad= \int_{\bbR^n} \rho\big(y_0^{(k)} \mid y_0^{(k-1)}, \bsz^{(1)},\ldots,\bsz^{(k-1)}, \sigma \big)
       \rho\big(y_0^{(k-1)} \mid \bsz^{(1)},\ldots,\bsz^{(k-1)}, \sigma \big) \, \mathrm{d}y_0^{(k-1)}. \notag
\end{align}

In the multi-interval setting with $y_0^{(k)} = y_{(k-1)s}$, we have the Markov property
$$
y_0^{(k)} \;\perp\; \bsz^{(1)},\ldots,\bsz^{(k-2)} \mid y_0^{(k-1)}, \bsz^{(k-1)}, \sigma,
$$
i.e., the initial state of the $k$-th interval depends on all past measurements only through the final state and measurements of the previous interval. Hence, for the first factor on the right-hand side of~\eqref{eq:multi_int_Kalman}, it holds that
$$
\rho\big(y_0^{(k)} \mid y_0^{(k-1)}, \bsz^{(1)},\ldots,\bsz^{(k-1)}, \sigma \big)
=
\rho\big(y_0^{(k)} \mid y_0^{(k-1)}, \bsz^{(k-1)}, \sigma \big).
$$
Regarding the second factor on the right-hand side of~\eqref{eq:multi_int_Kalman}, given $\sigma$ and $\bsz^{(k-2)}$, the initial condition $y_0^{(k-1)}$ is clearly independent of the future measurements $\bsz^{(k-1)}$, so 
$$
\rho\big(y_0^{(k-1)} \mid \bsz^{(1)},\ldots,\bsz^{(k-1)}, \sigma \big)
=
\rho\big(y_0^{(k-1)} \mid \bsz^{(1)},\ldots,\bsz^{(k-2)}, \sigma \big).
$$
This yields the recursion
\begin{align*}
&\rho_{\rm Kal}\big(y_0^{(k)} \mid \bsz^{(1)},\ldots,\bsz^{(k-1)}, \sigma \big)\\
=
&\quad\int_{\bbR^n} 
\rho\big(y_0^{(k)} \mid y_0^{(k-1)}, \bsz^{(k-1)}, \sigma \big)
\rho\big(y_0^{(k-1)} \mid \bsz^{(1)},\ldots,\bsz^{(k-2)}, \sigma \big)
\, \mathrm{d}y_0^{(k-1)}.
\end{align*}
Solving this recursion until the first estimation interval with $y_0^{(1)} = y_0$ gives
\begin{equation*}
\begin{aligned}
&\rho_{\rm Kal}\big(y_0^{(k)} \mid \bsz^{(1)},\ldots,\bsz^{(k-1)}, \sigma \big)\\
&\qquad= \int_{\bbR^{n \times(k-1)}} \left(\prod_{i=1}^{k-1} \rho(y_0^{(i+1)} \mid y_0^i,\bsz^{(i)},\sigma) \right) \rho(y_0^{(1)})\, \mathrm d y_0^{(1)}\cdots\mathrm d y_0^{(k-1)}.
\end{aligned}
\end{equation*}
For each $k$, this conditional density can be computed exactly by means of the Kalman filter. In particular, the Kalman filter admits a natural extension across multiple estimation intervals: the density over all $k$ estimation intervals defined by the above recursion is obtained by successive applications of the Kalman filter, using the posterior from the previous estimation interval as the initial condition for the next one. Accordingly, in the following section we present the Kalman filter formulation for a single estimation interval.

\subsection{Kalman filtering on a single estimation interval}\label{sec:KalmanFilter}
The aim of this section is to compute the distribution~\eqref{eq:Gaussian_mix2} for a single estimation interval. We will use the Kalman filter to obtain the distribution of the coupled virtual-physical twin state and then condition on the realization of the virtual twin. This is equivalent to interpreting the virtual twin as an additional noise-free observation and applying the Kalman filter to this augmented observation model.

Given $\sigma$, used in $\mathcal{A}_{\sigma}$, the coupled system after the $k$-th parameter update is linear and Gaussian. Precisely, it is 
\begin{align*}
\begin{bmatrix}y_{j+1}^{(k)} \\ \widehat{y}_{j+1}^{(k)} \end{bmatrix} &= A_{\rm cpl}(\sigma,\widehat{\sigma}^{(k)}) \begin{bmatrix}y_{j}^{(k)} \\ \widehat{y}_{j}^{(k)} \end{bmatrix} + B_{\rm cpl}(\widehat{\sigma}^{(k)}) \eta_j,  &&j = 0,\ldots, s-1,\\ \begin{bmatrix}y_{0}^{(k)} \\ \widehat{y}_{0}^{(k)} \end{bmatrix} &\sim \mathcal{N}\left(\begin{bmatrix}m_0^{(k)}(\sigma) \\ \widehat{y}_0^{(k)}\end{bmatrix}, \begin{bmatrix} \mathcal{C}_0^{(k)}(\sigma) & 0 \\ 0 & 0 \end{bmatrix}\right),\\
z_j^{(k)} &= C_{\rm cpl} \begin{bmatrix}y_{j}^{(k)} \\ \widehat{y}_{j}^{(k)} \end{bmatrix} + \eta_j,  &&j = 0,\ldots,s,
\end{align*}
where $A_{\rm cpl}$, $B_{\rm cpl}$, and $C_{\rm cpl}$ are as in \Cref{sec:forwardmap}. Recall that~$\widehat{y}_0^{(k)}$, the initial condition of the virtual twin, is known when we estimate the unknown parameter $\sigma$. This is modeled by the covariance identical to zero above.

For fixed~$\widehat{\sigma}$, the conditional distribution of the pair $x_j^{(k)} = \begin{bmatrix} y_{j}^{(k)} & \widehat{y}_{j}^{(k)} \end{bmatrix}$, given the data $z_{j}^{(k)}$ and the parameter $\sigma$, is again Gaussian. It is determined by the Kalman filter in terms of the mean and covariance. The Kalman filter reads \cite[Chapter 5]{Simon06}:
\paragraph{Initialization:}
\begin{align*}
    m_{0|0}^{(k)}(\sigma) = \begin{bmatrix}m_0^{(k)}(\sigma) \\ \widehat{y}_0^{(k)}\end{bmatrix},\quad \text{and} \quad
    \mathcal{C}_{0|0}^{(k)}(\sigma) = \begin{bmatrix} \mathcal{C}_0^{(k)}(\sigma) & 0 \\ 0 & 0 \end{bmatrix}.
\end{align*}

Then, for $j = 1,\ldots, s$:
\paragraph{Prediction:}
\begin{align*}
m_{j|j-1}^{(k)}(\sigma) &= A_{\rm cpl}(\sigma,\widehat{\sigma}^{(k)}) m_{j-1|j-1}^{(k)}(\sigma),\\
\mathcal{C}_{j|j-1}^{(k)}(\sigma) &= A_{\rm cpl}(\sigma,\widehat{\sigma}^{(k)}) \mathcal{C}_{j-1|j-1}^{(k)}(\sigma)A_{\rm cpl}(\sigma,\widehat{\sigma}^{(k)})^\top
                    + B_{\mathrm{cpl}}(\widehat{\sigma}^{(k)}) \Gamma_j B_{\mathrm{cpl}}(\widehat{\sigma}^{(k)})^\top.
\end{align*}
\paragraph{Innovation and Kalman Gain:}
\begin{align*}
\text{innovation: } \nu_j(\sigma) &= z_j^{(k)} - C_{\mathrm{cpl}}\, m_{j|j-1}^{(k)}(\sigma),\\
\text{innovation covariance: } S_j(\sigma) &= C_{\mathrm{cpl}}\, \mathcal{C}_{j|j-1}^{(k)}(\sigma)\, C_{\mathrm{cpl}}^\top + \Gamma_j,\\
\text{Kalman gain: } K_j(\sigma) &= \mathcal{C}_{j|j-1}^{(k)}(\sigma)\, C_{\mathrm{cpl}}^\top\, S_j(\sigma)^{-1}.
\end{align*}
\paragraph{Coupled update:}
\begin{align*}
\mathfrak{m}_{j|j}(\sigma) &= m_{j|j-1}^{(k)}(\sigma) + K_j(\sigma)\, \nu_j(\sigma),\\
\mathfrak{C}_{j|j}(\sigma) &= \mathcal{C}_{j|j-1}^{(k)}(\sigma) - K_j(\sigma)\, S_j(\sigma)\, K_j(\sigma)^\top.
\end{align*}

\paragraph{Conditioning on the virtual twin:}

\begin{align*}
m_{y\mid \widehat y;j}^{(k)} &= [\mathfrak{m}_{j|j}(\sigma)]_{1:d} - [\mathfrak{C}_{j|j}(\sigma)]_{d+1:2d,1:d}[\mathfrak{C}_{j|j}(\sigma)]_{d+1:2d,d+1:2d}^{-}\left(\widehat{y}_j^{(k)}-[\mathfrak{m}_{j|j}(\sigma)]_{d+1:2d}\right),\\
C_{y \mid \widehat{y};j}^{(k)} &= [\mathfrak{C}_{j|j}(\sigma)]_{1:d,1:d} - [\mathfrak{C}_{j|j}(\sigma)]_{1:d,d+1:2d}[\mathfrak{C}_{j|j}(\sigma)]_{d+1:2d,d+1:2d}^{-}[\mathfrak{C}_{j|j}(\sigma)]_{d+1:2d,1:d},
\end{align*}
where $[v]_{n:l}$ denotes a vector containing components $v_{n},v_{n+1},\ldots,v_{l}$ of a vector $v$, and $[A]_{n:l,p:q}$ denotes the submatrix containing the rows $n, n+1,\ldots,l$ and columns $p,p+1,\ldots,q$ of a matrix $A$. Here, $[\mathfrak{C}_{j|j}(\sigma)]_{d+1:2d,d+1:2d}^{-}$ denotes the generalized inverse of $[\mathfrak{C}_{j|j}(\sigma)]_{d+1:2d,d+1:2d}$, see~\cite[Prop. 3.13]{eaton2007multivariate}.

\paragraph{Conditioned update:}
\begin{align*}
    m_{j\mid j}^{(k)}(\sigma) = \begin{bmatrix}
        m_{y\mid \widehat{y};j}^{(k)} \\ \widehat{y}_j^{(k)}
    \end{bmatrix}\quad \text{and} \quad C_{j \mid j}^{(k)}(\sigma) = \begin{bmatrix}
        \mathcal{C}_{y\mid \widehat{y};j}^{(k)} & 0 \\ 0 & 0 
    \end{bmatrix}.
\end{align*}

In the coupled update step we compute, for fixed $\widehat{\sigma}$ and $1 \le j\le s$, the conditional distribution of the coupled state given the data $(z_0^{(k)},\ldots,z_j^{(k)})$ and given the parameter $\sigma$. Since the virtual twin component $\widehat{y}_j^{(k)}$ is known at the time of the observation $z_j^{(k)}$, we then compute the conditional distribution of the physical state $y_j^{(k)}$ given the virtual twin $\widehat{y}_j^{(k)}$.

In the next estimation interval, we initialize with $m_{0\mid 0}^{(k+1)}(\sigma) = m_{s\mid s}^{(k)}(\sigma)$ and $\mathcal{C}_{0\mid 0}^{(k+1)}(\sigma) = C_{s\mid s}^{(k)}(\sigma)$. This procedure provides us with the distribution of the initial condition~\eqref{eq:Gaussian_mix2} on each estimation interval. It enters the parameter estimation via its density denoted by~$\rho_{\rm Kal}$ in~\eqref{eq:marginalizeInit}.

\section{Numerical experiments}\label{sec:numerics}

Our digital twin strategy is summarized in \Cref{Alg:Offl}. In~\Cref{Alg:Onl}, a detailed description of the computation of the posterior distribution in Step 13 of \Cref{Alg:Offl} is provided.
With reference to \Cref{rem:ricgains}, unless otherwise specified, we set $R=1$ and $Q=10^{-12}\cdot {\rm Id_2}$. Further details on the implementation of the examples listed above are provided with the examples. These are a harmonic oscillator, a spring-damper-system, and a finite element discretization of a parabolic partial differential equation. 
\begin{figure}[htbp]
\centering
\includegraphics[width=0.95\textwidth]{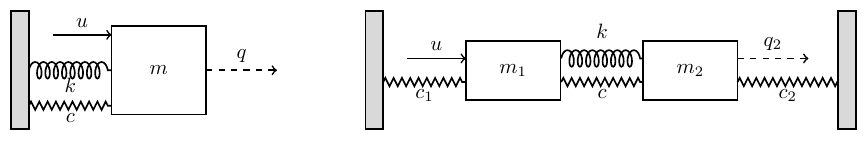}
\caption{Comparison of the single mass oscillator system (left) and a two-mass (right) spring--damper system.}
\label{fig:tikz}
\end{figure}
The oscillator example and the spring-damper-system are illustrated in \Cref{fig:tikz}.

\begin{algorithm}[H]
 \caption{Digital twin stabilization}
\begin{algorithmic}[1]\label{Alg:Offl}
\REQUIRE{Prior distribution of the parameters, prior distribution of the initial condition $\mathcal{N}(m_0^{(1)},\mathcal{C}^{(1)})$, control operator $B$, output operator $C$, initial guess for the digital twin, e.g.,~$\widehat{y}_0^{(1)} = m_0^{(1)}$.}
\ENSURE{Feedback control input $u =K_{\widehat{\sigma}}\widehat{y}$.}

\STATE $k = 0$;
\STATE Compute an estimate~$\widehat{\sigma}^{(k)}$ for~$\sigma \sim \mu_{\rm prior}$, e.g.,~$\widehat{\sigma}^{(k)} = \mathbb{E}_{\mu_{\rm prior}}[\sigma]$;
\STATE Assemble~$A_{\widehat{\sigma}^{(k)}}$ and compute $\mathcal{A}_{\widehat{\sigma}^{(k)}}$ and $\mathcal{B}_{\widehat{\sigma}^{(k)}}$ as in \Cref{re:appr};
\STATE Construct~$K_{\widehat{\sigma}^{(k)}}$ and~$L_{\widehat{\sigma}^{(k)}}$ as in \Cref{rem:ricgains};
\WHILE{the coupled system~\eqref{eq:timeDiscSys_diffsigma} is running}
\STATE $k \leftarrow k+1$;
\FOR{$j=0:s$}
\STATE Collect data~$z_j^{(k)} =z(t_j)$ from the physical model according to~\eqref{eq:obs};
\ENDFOR
\IF{$k>1$}
\STATE Run Kalman filter (see \eqref{eq:Gaussian_mix2}) to obtain~$y_0^{(k)}\sim \mathcal{N}(m_0^{(k)},\mathcal{C}_0^{(k)})$;
\ENDIF
\STATE Update~$\widehat{\sigma}$ based on the posterior~$\mu_{\rm post}^{(k)}$ (see \eqref{eq:posterior-density}), e.g.,~$\widehat{\sigma}^{(k)} = \mathbb{E}_{\mu_{\rm post}^{(k)}}[\sigma]$;
\STATE Construct~$K_{\widehat{\sigma}}^{(k)}$ and~$L_{\widehat{\sigma}}^{(k)}$, assemble~$A_{\widehat{\sigma}}^{(k)}$ and compute $\mathcal{A}_{\widehat{\sigma}^{(k)}}$ and $\mathcal{B}_{\widehat{\sigma}^{(k)}}$;
\ENDWHILE
 \end{algorithmic}
\end{algorithm}

\begin{algorithm}[htb]
\caption{Sequential Monte Carlo with Resample--Move Step}\label{Alg:Onl}
\begin{algorithmic}[1]
\REQUIRE{Prior particles $\Sigma_0 = \{\sigma_i\}_{i=1}^N$, observations $\bsz^{(k)}$ on intervals $\{\mathcal{I}^k_{\rm obs}\}$, number of MCMC steps $n_{\rm MCMC}$, proposal scale $\varepsilon$.}

\ENSURE{Particle approximation $\Sigma_k$ of the posterior at each interval.}

\STATE Initialize sequential likelihoods $\rho_{\rm like}^0(\bsz^{(0)}\mid\sigma_i) = 1$ and particle set $\Sigma_1 = \Sigma_0$;

\FOR{$k = 1,2,\dots$} 

\FOR{$i = 1,\ldots,N$}
    \STATE Compute likelihood increment $\rho_{\rm like}^{\rm new}(\bsz^{(k)}\mid \sigma_i)$ on $\mathcal{I}^k_{\rm obs}$ as in \eqref{eq:rholikenew};
    \STATE Update sequential likelihood:
    \[
        \rho_{\rm like}^{(k)}(\bsz^{(k)} \mid \sigma_i) = \rho_{\rm like}^{(k-1)}(\bsz^{(k-1)}\mid \sigma_i) \cdot \rho_{\rm like}^{\rm new}(\bsz^{(k)}\mid \sigma_i);
    \]

    \STATE Compute normalized weights:
    \[
        w_i \propto \rho_{\rm like}^{(k)}(\bsz^{(k)}\mid\sigma_i), \quad \sum_{i=1}^N w_i = 1;
    \]
\ENDFOR

    \STATE Resample $N$ particles according to $w_1,\dots,w_N$:
    \[
        \Sigma_k = \{\sigma_{\iota_i} \in \Sigma_{k-1} \,:\, i=1,\dots,N\}, 
        \quad \text{with } \Pr(\iota_i = \ell) = w_\ell \,\text{for }\ell = 1,\ldots, N;
    \]

    \FOR{$s=1,\dots,n_{\rm MCMC}$}
        \FOR{$i=1,\dots,N$}
            \STATE Propose candidate $\sigma_i' = \sigma_i + \varepsilon \, \xi$, $\xi \sim \mathcal{N}(0, I)$;
            
            \STATE Evaluate likelihood increment for candidate $\rho_{\rm like}^{\rm new}(\bsz^{(k)}\mid\sigma_i')$ according to \eqref{eq:rholikenew}:

            \STATE Approximate candidate sequential likelihood:
            \[
                \rho_{\rm like}'(\bsz^{(k)}\mid\sigma_i') = \rho_{\rm like}^{(k-1)}(\bsz^{(k-1)}\mid\sigma_i) \cdot \rho_{\rm like}^{\rm new}(\bsz^{(k)}\mid\sigma_i');
            \]
            \STATE Accept candidate with probability
            \[
                \alpha = \min\left(1, \frac{\rho_{\rm like}'(\bsz^{(k)}\mid\sigma_i') \rho_{\rm prior}(\sigma_i')}{\rho_{\rm like}^{(k)}(\bsz^{(k)}\mid\sigma_i)\rho_{\rm prior}(\sigma_i)} \right);
            \]
            \IF{accepted}
                \STATE $\sigma_i = \sigma_i'$;
                \STATE $\rho_{\rm like}^{(k)}(\bsz\mid\sigma_i) = \rho_{\rm like}'(\bsz\mid\sigma_i')$;
            \ENDIF
        \ENDFOR
    \ENDFOR
\ENDFOR

\end{algorithmic}
\end{algorithm}

\subsection{Oscillator}\label{sec:oscnum}
We consider a mass attached to a spring, as depicted in \Cref{fig:tikz} (left).

Let~$x(t)$ be the position of the mass~$m$ and let~$x_{0}$ be an equilibrium position at rest. 
Now, consider the relative position~$q(t)=x(t)-x_{0}$. For the dynamics, we have the oscillator model (for small~$q$)
\begin{align}
    m\ddot q&=-c\dot q-kq+u.
\end{align}
The input control forcing~$u$ acts on the mass~$m$ and we measure the (relative) position~$q$. Hereafter we take~$k=m=1$ and consider the damping constant~$c=\sigma$ as the uncertain parameter. Denoting~$Q=(q,\dot q)$, we write the system as
\begin{subequations}\label{sys-osci-sig}
 \begin{align}
   \dot Q&=A_\sigma Q+Bu,\\
   \text{with}\quad A_\sigma&=\begin{bmatrix}
       0&1\\
       -1&-\sigma
   \end{bmatrix},\qquad
   B=\begin{bmatrix}
       0\\1
   \end{bmatrix},\qquad
     C=\begin{bmatrix}
       1&0
   \end{bmatrix}.\notag
\end{align}
\end{subequations}
The Kalman controllability~$\clK_\clC$ and observability~$\clK_\clO$ matrices are given by
\begin{align*}
  \clK_\clC=\begin{bmatrix}
       0&1\\
       1&-\sigma
   \end{bmatrix},
   \qquad
   \clK_\clO = \begin{bmatrix}
       1&0\\
       0&1
       \end{bmatrix},
\end{align*}
which are both full-rank. In particular,~$(A_\sigma,B)$ is stabilizable  and~$(A_\sigma,C)$ is detectable for all~$\sigma\in\bbR$.

\begin{remark}
 Usually, the damping parameter is positive $\sigma>0$. The damping free case~$\sigma=0$ is relevant as well. We also test our strategy with the academic case~$\sigma<0$, which is of interest because in this case the free dynamics is exponentially unstable and the input will have to be able to counteract this instability.
\end{remark}

The numerical results displayed in~\Cref{fig:osc1} and~\Cref{fig:osc2} are obtained using $N = 5000$ particles initialized from a uniform prior distribution on the interval $[-1,1]$. The true parameter is $\sigma = \tfrac{\pi}{7}\approx 0.4488$ in the stable case and $\sigma = -\tfrac{\pi}{7}\approx -0.4488$ in the unstable case. In the stable case, the estimate $\widehat{\sigma}$ is updated at times $2,4,6,\ldots,20$ with observations at times $1,2,3,\ldots,20$, while in the unstable case, $\widehat{\sigma}$ is updated more frequently at times $1,2,3,\ldots,20$ with more observations at times $0.25,0.5,0.75,\ldots,20$. In both cases, we initialize with $y_0 = \begin{bmatrix} 1 & -1 \end{bmatrix}^\top$ and $\widehat{y}_0 =\begin{bmatrix} 1.5 & 0 \end{bmatrix}^\top $. Further, we assume that the distribution of the initial condition is $\mathcal{N}\Big(\begin{bmatrix} 1.5 & -1.1 \end{bmatrix}^\top, {\rm Id_2}\Big)$ and that the distribution of the noise is $\mathcal{N}(0,0.015\|y_0\|)$. The matrices \(\mathcal{A}_{\sigma},\clB_{\sigma}\) are computed using the matrix exponential as $\clA_\sigma = e^{A_\sigma \Delta t}$ and $\clB_{\sigma} = \int_0^{\Delta t} e^{A_\sigma(\Delta t-\tau)}\,\mathrm d\tau\, B$, whereas the matrices \(\clA_{\widehat{\sigma}},\clB_{\widehat{\sigma}}\) are computed using a Crank--Nicolson discretization as in \Cref{re:appr}. We use $n_{\rm MCMC}= 50$ steps with proposal scale $\epsilon = 0.015$. 

Comparing the stable and unstable cases, we make the following observations. For unstable systems, differences caused by small parameter variations amplify over time. As a consequence, the outputs $m_0^{(k+1)}$ of the Kalman filters (see \Cref{sec:KalmanFilter}) vary a lot for different parameters. Hence, their difference to the data, which occurs in the likelihood \eqref{eq:rholikenew}, varies significantly across different realizations of the unknown parameter. This leads to notably centered posterior density functions in \Cref{fig:osc2}. On the other hand, for stable systems, the differences caused by small parameter variations vanish over time, leading to less centered posterior densities, see \Cref{fig:osc1}).

\begin{figure}[h!]
\begin{subfigure}{0.48\textwidth}
    \centering
    \includegraphics[width=1.\textwidth]{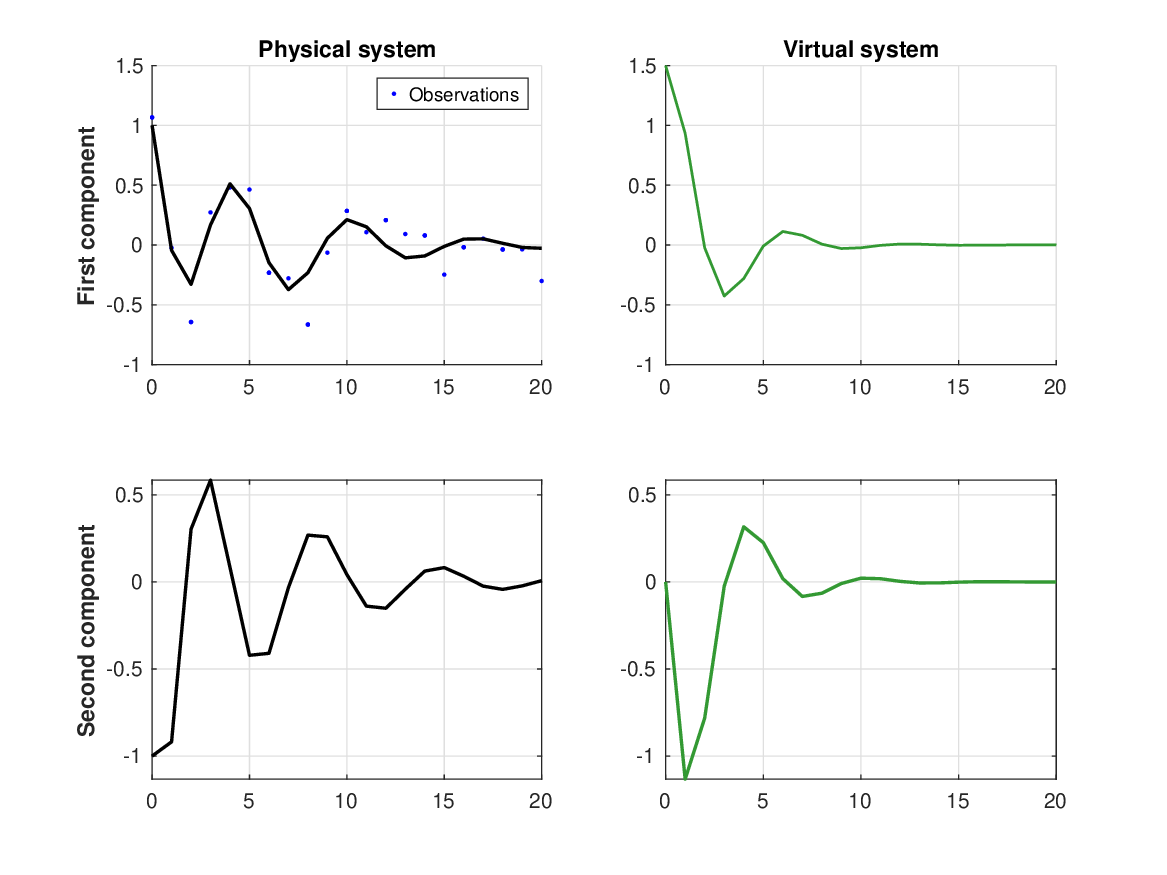}
\caption{Trajectories of the coupled physical-virtual system.}
\end{subfigure}\hfill
\begin{subfigure}{0.48\textwidth}
    \centering
    \includegraphics[width=1.\textwidth]{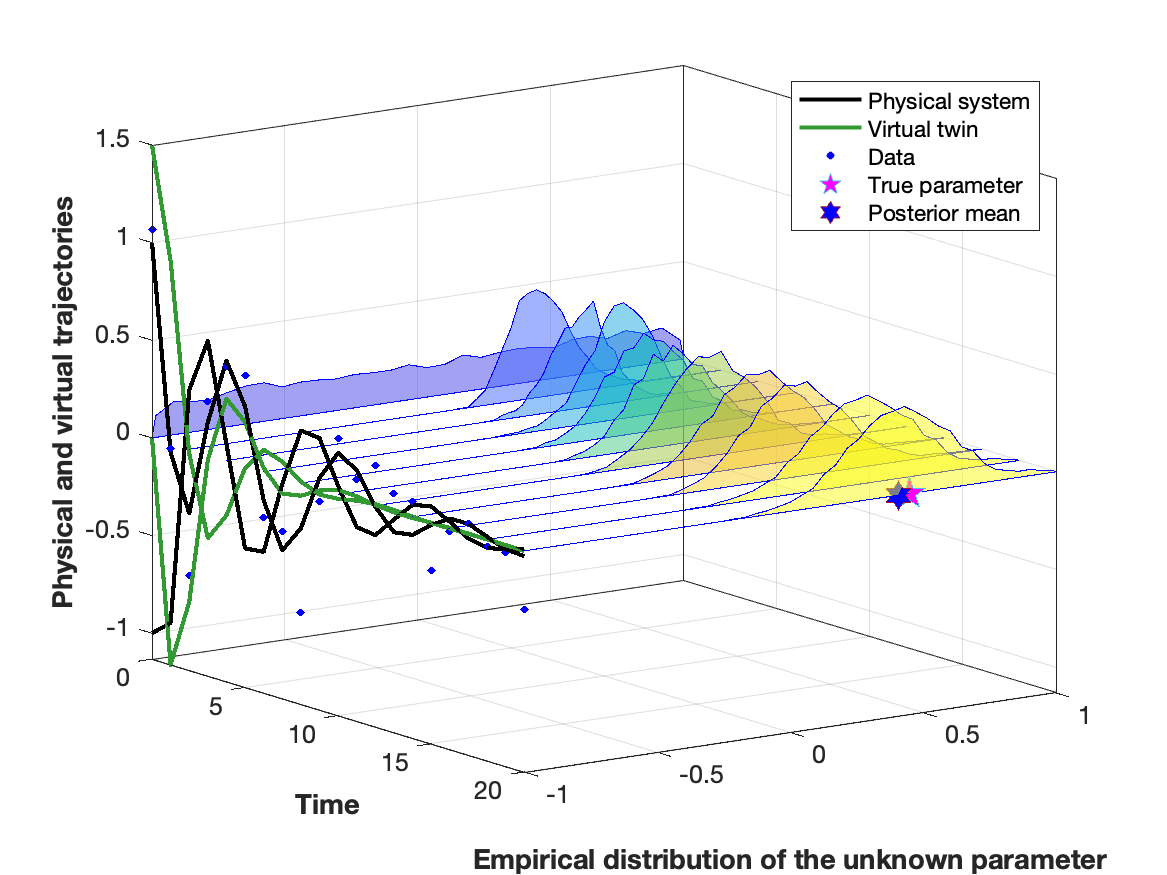}
\caption{Online estimation of the empirical densities of the uncertain parameter.}
\end{subfigure}\caption{The true parameter yields a stable system.}\label{fig:osc1}
\end{figure}

\begin{figure}[h!]
\begin{subfigure}{0.48\textwidth}
    \centering
    \includegraphics[width=1.\textwidth]{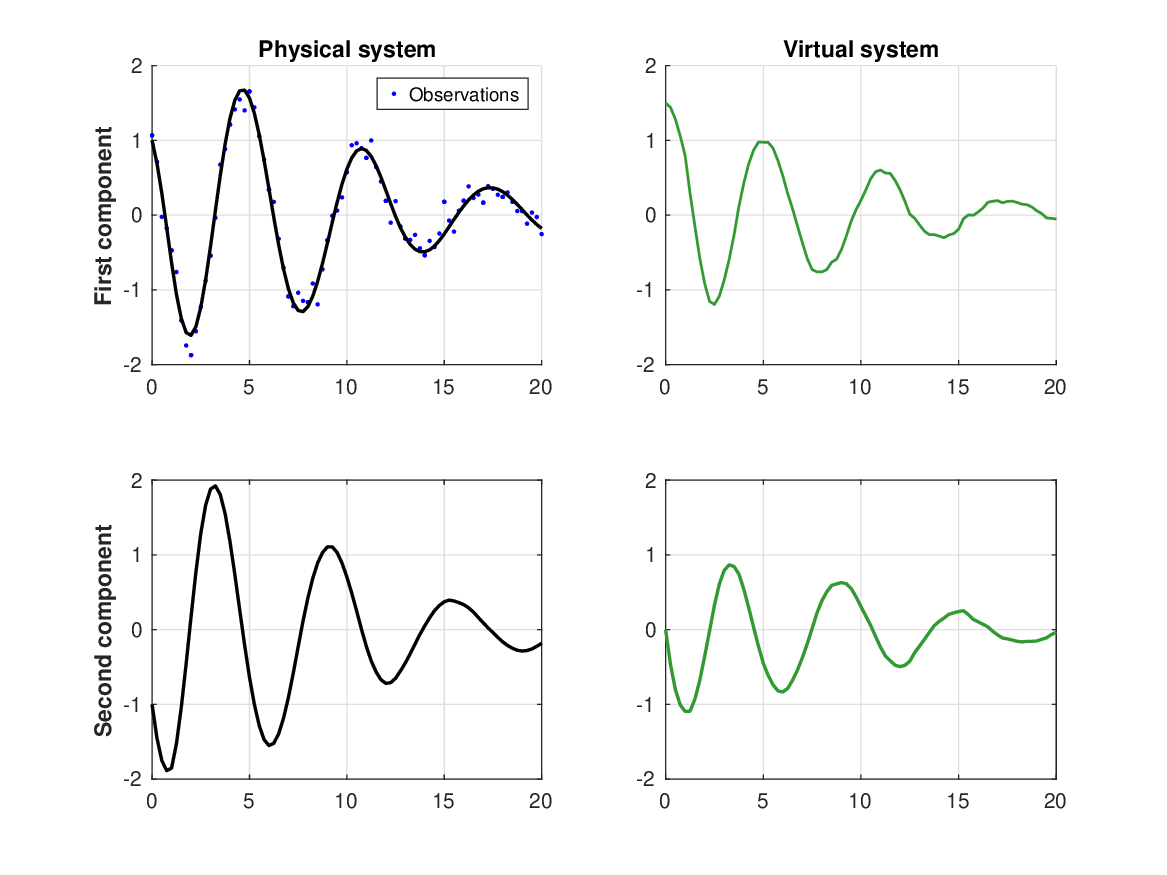}
\caption{Trajectories of the coupled physical-virtual system.}
\end{subfigure}\hfill
\begin{subfigure}{0.48\textwidth}
    \centering
    \includegraphics[width=1.\textwidth]{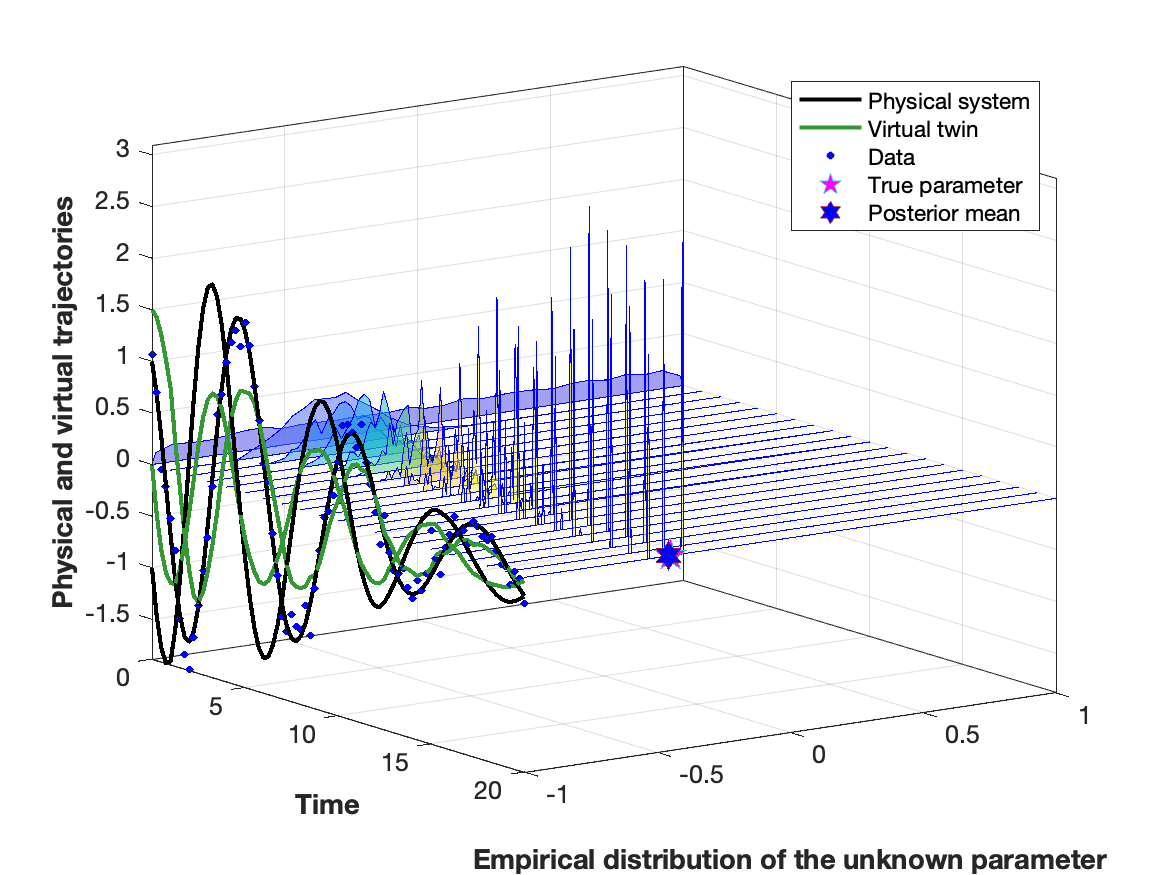}
\caption{Online estimation of the empirical densities of the uncertain parameter.}
\end{subfigure}\caption{The true parameter yields an unstable system.}\label{fig:osc2}
\end{figure}

In further numerical experiments, which are not displayed in this manuscript, we discovered that the update frequency of \(\widehat{\sigma}_k\) is crucial for the parameter estimation quality and stabilization. For instance, decreasing the update frequency, estimating at times $4,8,12,16,20$ results in failure of stabilization in the unstable case. Compared to the influence of the update frequency, the noise level and the number of observation have a smaller effect on the results.

\subsection{Spring-damper-system}\label{sec:springdamper} We next consider a finite-dimensional system which is both stabilizable and detectable, but neither controllable nor observable, see \Cref{fig:tikz} (right).

Let~$(x_1(t),x_2(t))\in[0,L]\times[0,L]$ be the position of the pair of masses~$(m_1,m_2)$, let~$(x_{1,0},x_{2,0})$ be an equilibrium position at rest, and define the relative position~$q(t)=(q_1(t),q_2(t))=(x_1(t)-x_{1,0},x_2(t)-x_{2,0})$. For the dynamics, we have the model (for small~$q$)
\begin{align}
    m_1\ddot q_1&=-c_1\dot q_1-c(\dot q_1-\dot q_2)-k(q_1-q_2)+u,\\
    m_2\ddot q_2&=-c_2\dot q_2-c(\dot q_2-\dot q_1)-k(q_2-q_1).
\end{align}
The input control forcing~$u$ acts on the mass~$m_1$ and we measure the (relative) position~$q_2$ of mass~$m_2$. Hereafter we consider the case $m_1=m_2=c_1=c=c_2=1$ with the uncertain parameter~$\sigma=k>0$.
Denoting~$Q=(q_1,q_2,\dot q_1,\dot q_2)$, we write the system as
\begin{subequations}\label{sys-2masses-sig}
\begin{align}
   \dot Q&=A_\sigma Q+Bu,\\
   \text{with}\quad A_\sigma&=\begin{bmatrix}
       0&0&1&0\\
       0&0&0&1\\
       -\sigma&\sigma&-2&1\\
       \sigma&-\sigma&1&-2
   \end{bmatrix} ,\quad
   B=\begin{bmatrix}
       0\\0\\1\\0
   \end{bmatrix},\quad
     C=\begin{bmatrix}
       0&1&0&0
   \end{bmatrix}.
\end{align}
\end{subequations}

Though the physical model above is considered with~$\sigma>0$, here we allow $\sigma$ to vary in all of $\mathbb{R}$. This is motivated by numerical tests which include the challenging case where the dynamics is exponentially unstable, if~$\sigma<0$.

It can be shown that the system~\eqref{sys-2masses-sig} is stabilizable for all~$\sigma\in\bbR$ and detectable for all~$\sigma\ne0$. Further, in case~$\sigma=1$ the system is neither controllable nor observable.

The numerical results displayed in~\Cref{fig:sds1} and~\Cref{fig:sds2} are obtained using $N = 5000$ particles initialized from a uniform prior distribution on the interval $[-1.5,1.5]$. The true parameter is $\sigma = \tfrac{\pi}{7}\approx 0.4488$ in the stable case and $\sigma = -0.1$ in the unstable case. In the stable case, the estimate $\widehat{\sigma}$ is updated at times $4,8,12,\ldots,60$ with observations at times $0.25,0.5,0.75,\ldots,60$, while in the unstable case, $\widehat{\sigma}$ is updated more frequently at times $0.5,1,1.5,\ldots,60$ with more observations at times $1/16,2/16,3/16,\ldots,60$. In both cases, we initialize with $y_0 = \begin{bmatrix} 1 & -0.5 & -1 & -0.2\end{bmatrix}^\top$ and $\widehat{y}_0 =\begin{bmatrix} -1.1 & 0 & 0 & 0 \end{bmatrix}^\top $. Furthermore, we assume that the distribution of the initial condition is the Gaussian $\mathcal{N}\Big(\begin{bmatrix} 1.5 & -1.1 & -0.8 & -0.18\end{bmatrix}^\top, {\rm Id_4}\Big)$ and that the distribution of the noise is $\mathcal{N}(0,0.15\|y_0\|)$. The matrices \(\mathcal{A}_{\sigma},\clB_{\sigma}\) are computed using the matrix exponential as $\clA_\sigma = e^{A_\sigma \Delta t}$ and $\clB_{\sigma} = \int_0^{\Delta t} e^{A_\sigma(\Delta t-\tau)}\,\mathrm d\tau\, B$, whereas the matrices \(\clA_{\widehat{\sigma}},\clB_{\widehat{\sigma}}\) are computed using a Crank--Nicolson discretization as in \Cref{re:appr}. We use $n_{\rm MCMC}= 10$ steps with proposal scale $\epsilon = 0.1$.

We observe that the system is stabilized in both the stable as well as the unstable case. In the unstable case, the posterior densities are more centered as compared to the stable case. Furthermore, we observe that the numerical stabilization of the unstable system fails if we update the estimate less frequently or have fewer observations.

\begin{figure}[h!]
\begin{subfigure}{0.48\textwidth}
    \centering
    \includegraphics[width=1.\textwidth]{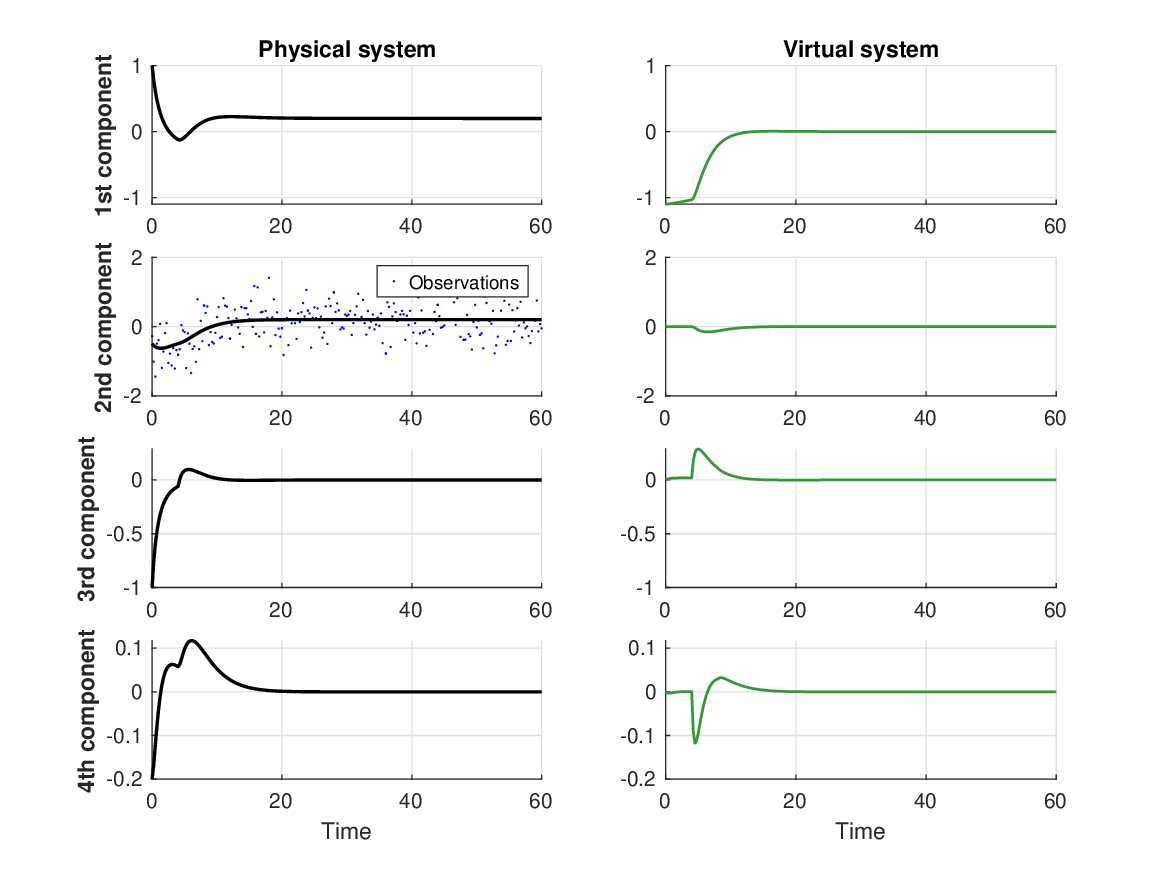}
\caption{Trajectories of the coupled digital-physical system.}
\end{subfigure}\hfill
\begin{subfigure}{0.48\textwidth}
    \centering
    \includegraphics[width=1.\textwidth]{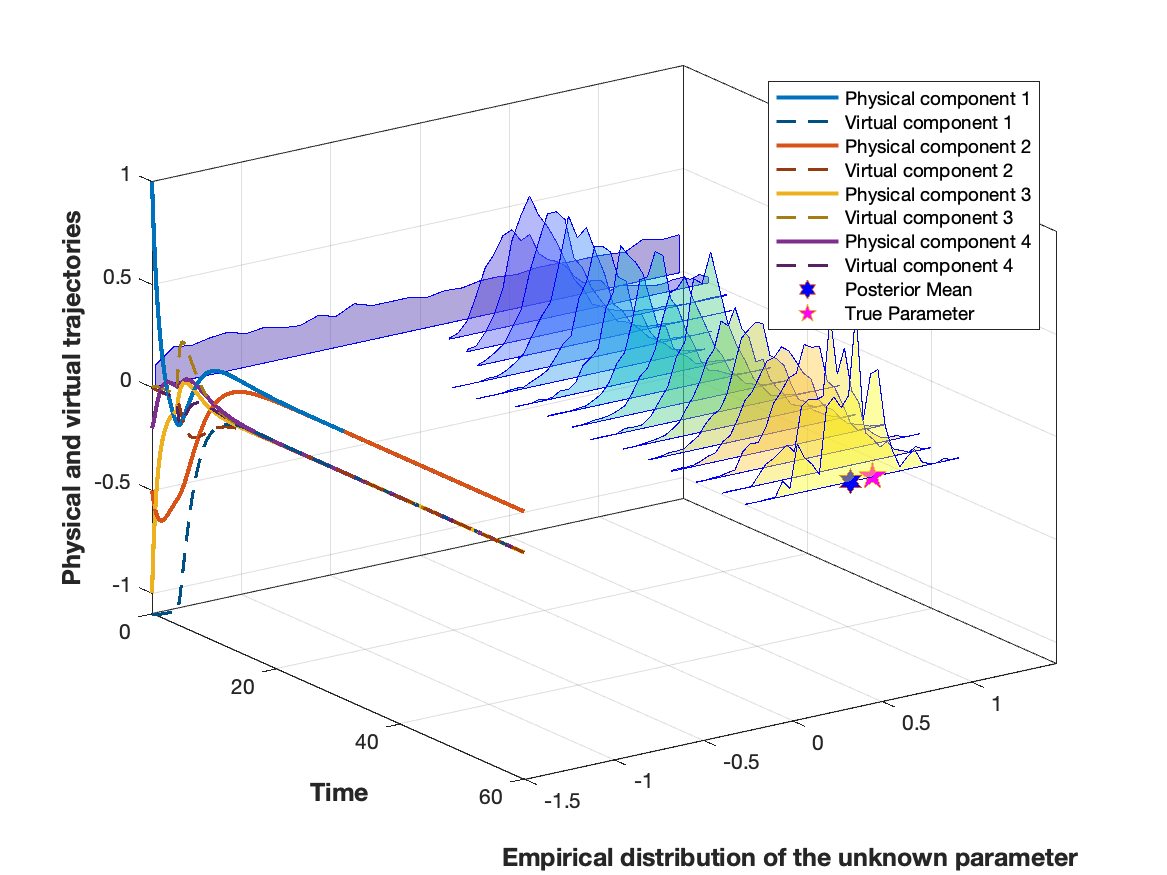}
\caption{Online estimation of the empirical densities of the uncertain parameter.}
\end{subfigure}\caption{The true parameter yields a stable system.}\label{fig:sds1}
\end{figure}

\begin{figure}[h!]
\begin{subfigure}{0.48\textwidth}
    \centering
    \includegraphics[width=1.\textwidth]{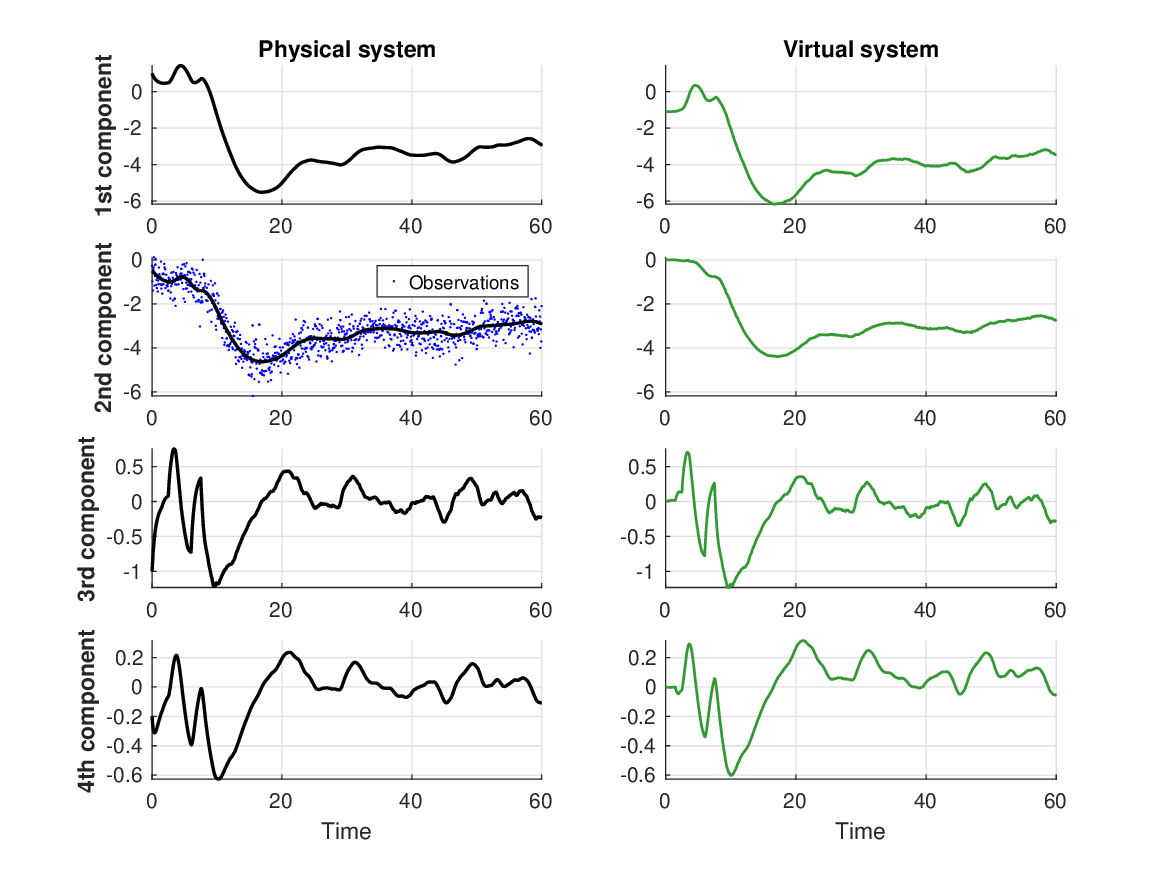}
\caption{Trajectories of the coupled digital-physical system.}
\end{subfigure}\hfill
\begin{subfigure}{0.48\textwidth}
    \centering
    \includegraphics[width=1.\textwidth]{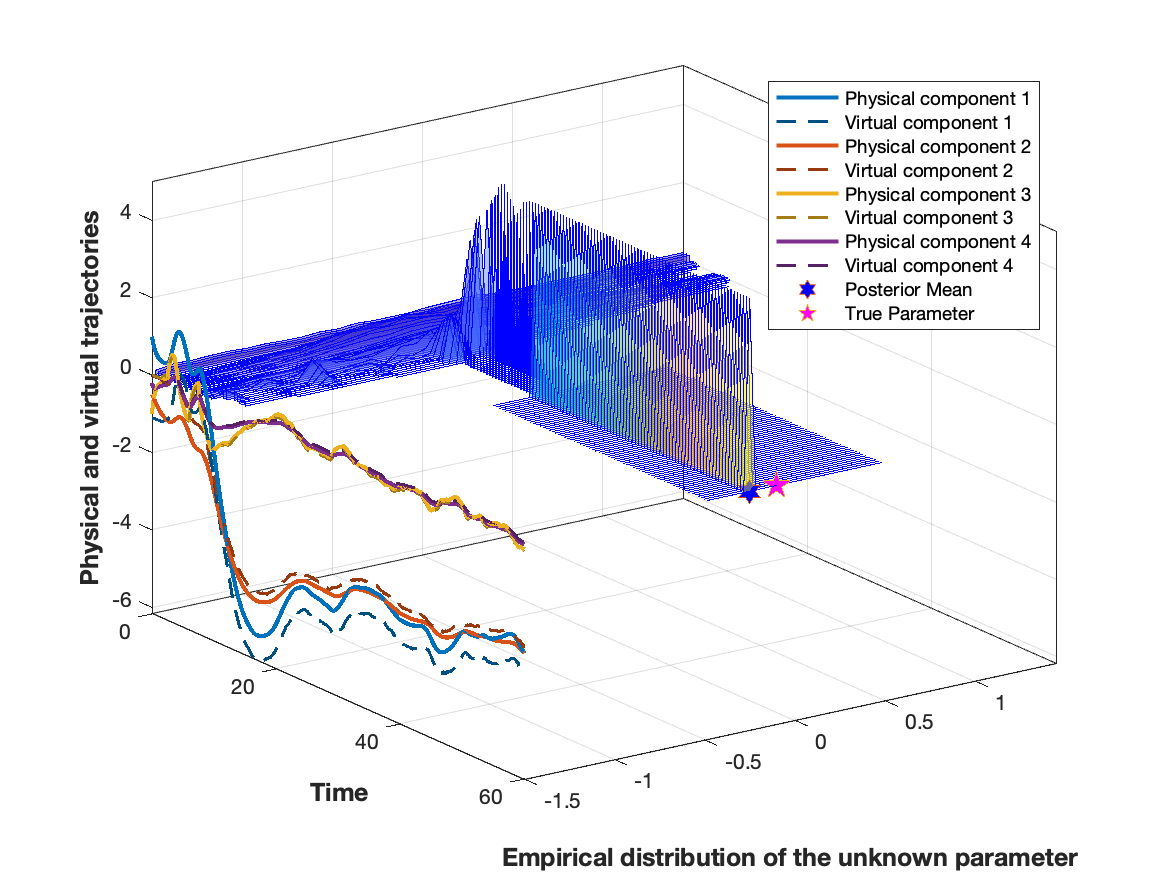}
\caption{Online estimation of the empirical densities of the uncertain parameter.}
\end{subfigure}\caption{The true parameter yields an unstable system.}\label{fig:sds2}
\end{figure}


\subsection{Diffusion-reaction equation}\label{sec:parabolic}
We consider a finite element semi-discretization of a parameterized diffusion-reaction equation.

Let~$D = (0,1)$ and, for every $\sigma$ let~$y_\sigma$ solve
\begin{subequations}\label{sys-diffreac-sig}
\begin{align}
    \tfrac{\partial}{\partial t} y_\sigma(t,\xi) - \nabla\cdot(a_\sigma(\xi) \nabla y_\sigma(t,\xi)) + c\,y_\sigma(t,\xi) &= \sum_{i=1}^{N_{\rm a}} u_i(t) \mathbf 1_{O_i}, && (t,\xi)\in(0,T]\times D,\\
    \frac{\partial y_\sigma(t,\xi)}{\partial\mathrm n} &= 0, && (t,\xi)\in[0,T]\times\partial D,\\
    y_\sigma(t,\xi) &= y_0, && (t,\xi)\in\{t=0\}\times D,
\end{align}
\end{subequations}
where~$\mathbf 1_{O_i}$ denotes the support of the~$i$-th actuator on the open set~$O_i\subset D$, for~$1\le i\le N_{\rm a}$, and~$c\in\bbR$ is a constant reaction coefficient. The uncertain, spatially varying diffusion coefficient is parameterized log-linearly by~$\sigma = [\sigma_1,\dots,\sigma_p]\in \Sigma_p \subset \bbR^p$ via
\begin{align*}
    a_\sigma(\xi) = \exp\Big(\sum_{j=1}^p \sigma_j\psi_j(\xi)\Big),
\end{align*}
with basis functions
\begin{align*}
    \psi_{2j-1}(\xi) = (2j-1)^{-\vartheta}\cos(j\pi\xi), \qquad
    \psi_{2j}(\xi) = (2j)^{-\vartheta}\sin(j\pi\xi),
\end{align*}
for $1\le j\le \lceil p/2\rceil$, so that~$\vartheta>0$ controls the decay of the parametric influence of higher modes, guaranteeing~$0 < a_{\min}\le a_\sigma(\xi)\le a_{\max}<\infty$ for~$\sigma$ in a bounded set, such as $\Sigma_p := [-10,10]$.

We discretize~\eqref{sys-diffreac-sig} in space using continuous piecewise-linear finite elements on a uniform mesh of~$D$ with~$d$ nodes and mesh width~$h=1/d$, with nodal basis~$\{\phi_\ell\}_{\ell=1}^d$. Let~$M\in\bbR^{d\times d}$ denote the finite element mass matrix, $M_{\ell,\ell'} = \int_D \phi_\ell\phi_{\ell'}\,\mathrm d\xi$, and let~$K_{a_\sigma}\in\bbR^{d\times d}$ denote the stiffness matrix associated with the weighted diffusion form, $(K_{a_\sigma})_{\ell,\ell'} = \int_D a_\sigma\,\nabla\phi_\ell\cdot\nabla\phi_{\ell'}\,\mathrm d\xi$. The homogeneous Neumann condition enters~\eqref{sys-diffreac-sig} naturally, without an explicit boundary term. Writing~$Y_\sigma(t)\in\bbR^d$ for the vector of nodal values of the finite element approximation of~$y_\sigma(t,\cdot)$, the semi-discrete system reads $M\dot Y_\sigma = -\big(K_{a_\sigma} + cM\big)Y_\sigma + \mathcal B u$, i.e.,
\begin{align*}
    \dot Y_\sigma = A_\sigma Y_\sigma + Bu, \quad A_\sigma := -M^{-1}\big(K_{a_\sigma}+cM\big),
\end{align*}
with~$B\in\bbR^{d\times N_{\rm a}}$ the discretized actuator operator, whose~$i$-th column approximates a smoothed indicator of~$O_i$. In our experiments, $N_{\rm a}=3$ actuators of width~$0.1$, uniformly distributed over~$D$ are utilized.

The output operator~$C\in\bbR^{d\times d}$ is taken as the~$M$-orthogonal projection onto the span of the leading~$N_{\rm eig}$ eigenfunctions~$\{e_j\}_{j=1}^{N_{\rm eig}}$ of the Neumann Laplacian on~$D$ (here, $e_j(\xi)=\cos((j-1)\pi\xi)$), i.e.,
\begin{align*}
    C = M E \big(E^\top M E\big)^{-1} E^\top, \qquad E = [e_1,\dots,e_{N_{\rm eig}}]\in\bbR^{d\times N_{\rm eig}},
\end{align*}
so that~$Cy$ retains the projection of~$y$ onto the~$N_{\rm eig}$ dominant spectral modes and discards the remainder. We choose~$d=64$ and~$N_{\rm eig}=16$. Furthermore, we take~$p=20$, decay exponent~$\vartheta=2$, constant reaction~$c=-1$ (so that the uncontrolled system is unstable), true parameter
\[
\sigma_{\rm true} = (-\tfrac\pi7,\ \sqrt2,\ -3,\ -1,\ 6,\ \xi_1,\dots,\xi_{15}), \qquad \xi_i \overset{\rm i.i.d.}{\sim}\mathrm{Unif}(-10,10),
\]
initial condition~$y_0(\xi) = 2\sin(2\pi\xi)$, and prior~$\sigma\sim\clN(\sigma_{\rm true}, 4\cdot\mathrm{Id}_p)$. The distribution of the initial condition is $\mathcal{N}(1.25\cdot y_0, {\rm Id_d})$.
The observation noise level is~$\eta\sim\mathcal N(0,\,0.0075\cdot\|y_0\|)$. The matrices \(\mathcal{A}_{\sigma},\clB_{\sigma}\) are computed using the matrix exponential as $\clA_\sigma = e^{A_\sigma \Delta t}$ and $\clB_{\sigma} = \int_0^{\Delta t} e^{A_\sigma(\Delta t-\tau)}\,\mathrm d\tau\, B$, whereas the matrices \(\clA_{\widehat{\sigma}},\clB_{\widehat{\sigma}}\) are computed using a Crank--Nicolson discretization as in \Cref{re:appr}. We use $n_{\rm MCMC}= 10$ steps with proposal scale $\epsilon = 0.5$. The parameter estimate is updated at times $0.05, 0.1, 0.15, 0.2$ with observations at times $1/100,2/100,3/100,\ldots,0.2$. 

We observe that the physical system is stabilized (see~\Cref{fig:heatPS}), while the virtual system tracks the physical one (compare~\Cref{fig:heatPS} with~\Cref{fig:heat_VS}) and the diffusion coefficient is estimated simultaneously (see~\Cref{fig:heat_coef}).

\begin{figure}[htbp]
    \centering

    \begin{subfigure}[b]{0.48\textwidth}
        \centering
        \includegraphics[width=\textwidth]{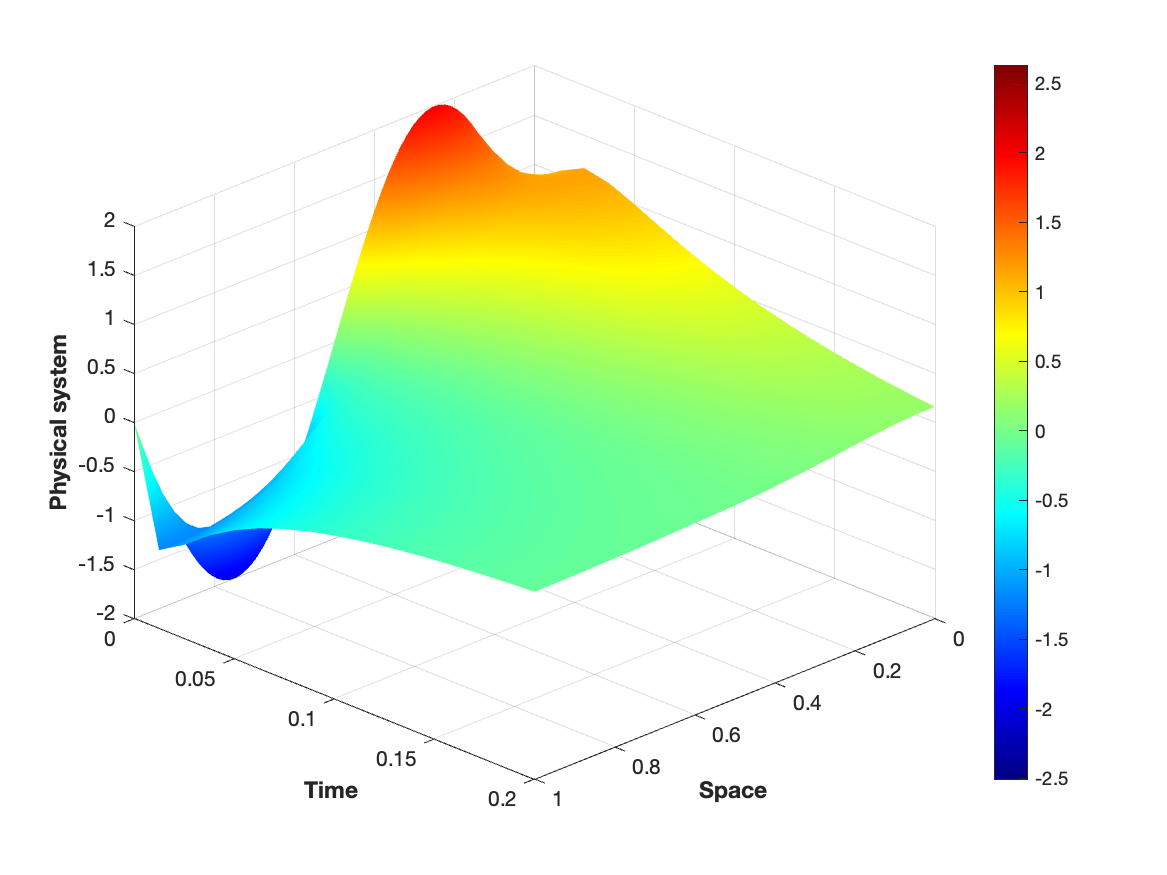}
        \caption{Physical system}
        \label{fig:heatPS}
    \end{subfigure}
    \hfill
    \begin{subfigure}[b]{0.48\textwidth}
        \centering
        \includegraphics[width=\textwidth]{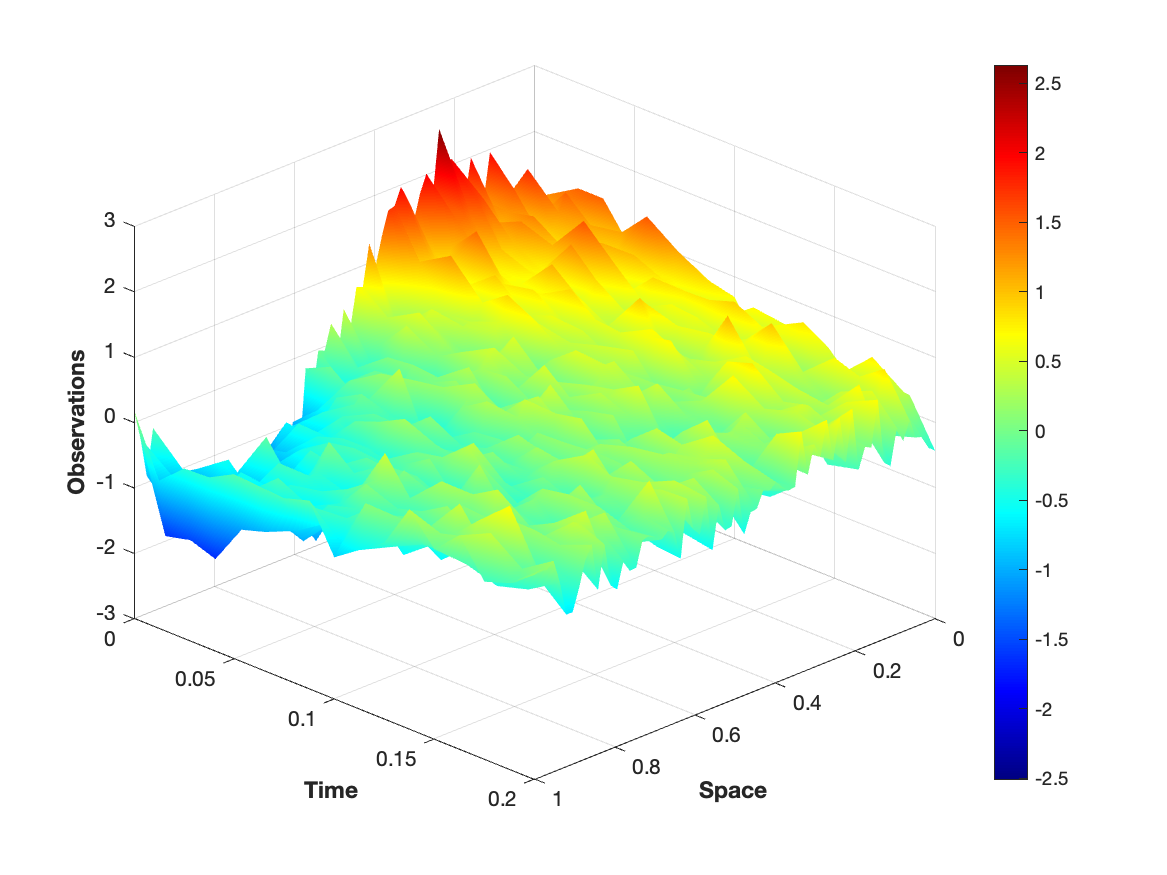}
        \caption{Observations}
        \label{fig:heat_obs}
    \end{subfigure}

    \vspace{0.5cm}

    \begin{subfigure}[b]{0.48\textwidth}
        \centering
        \includegraphics[width=\textwidth]{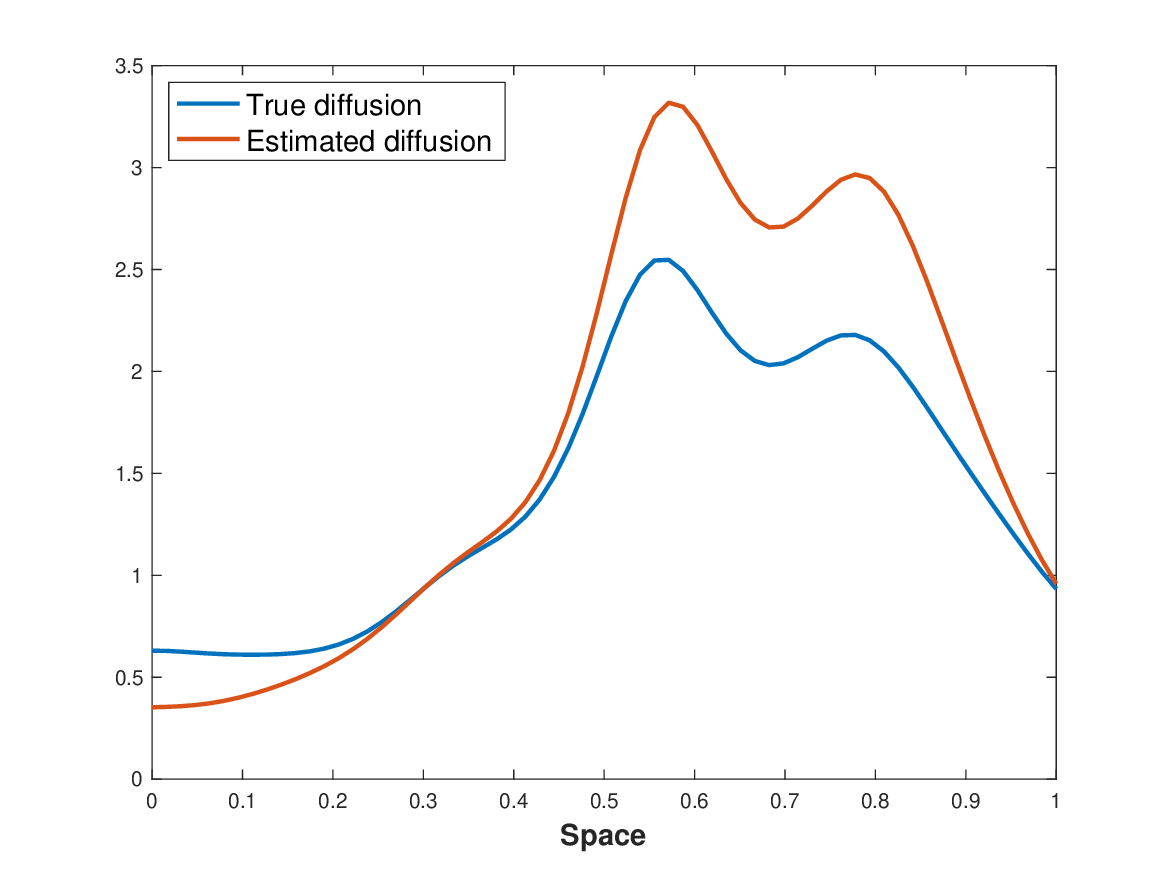}
        \caption{Diffusion coefficient at $t = 0.2$}
        \label{fig:heat_coef}
    \end{subfigure}
    \hfill
    \begin{subfigure}[b]{0.48\textwidth}
        \centering
        \includegraphics[width=\textwidth]{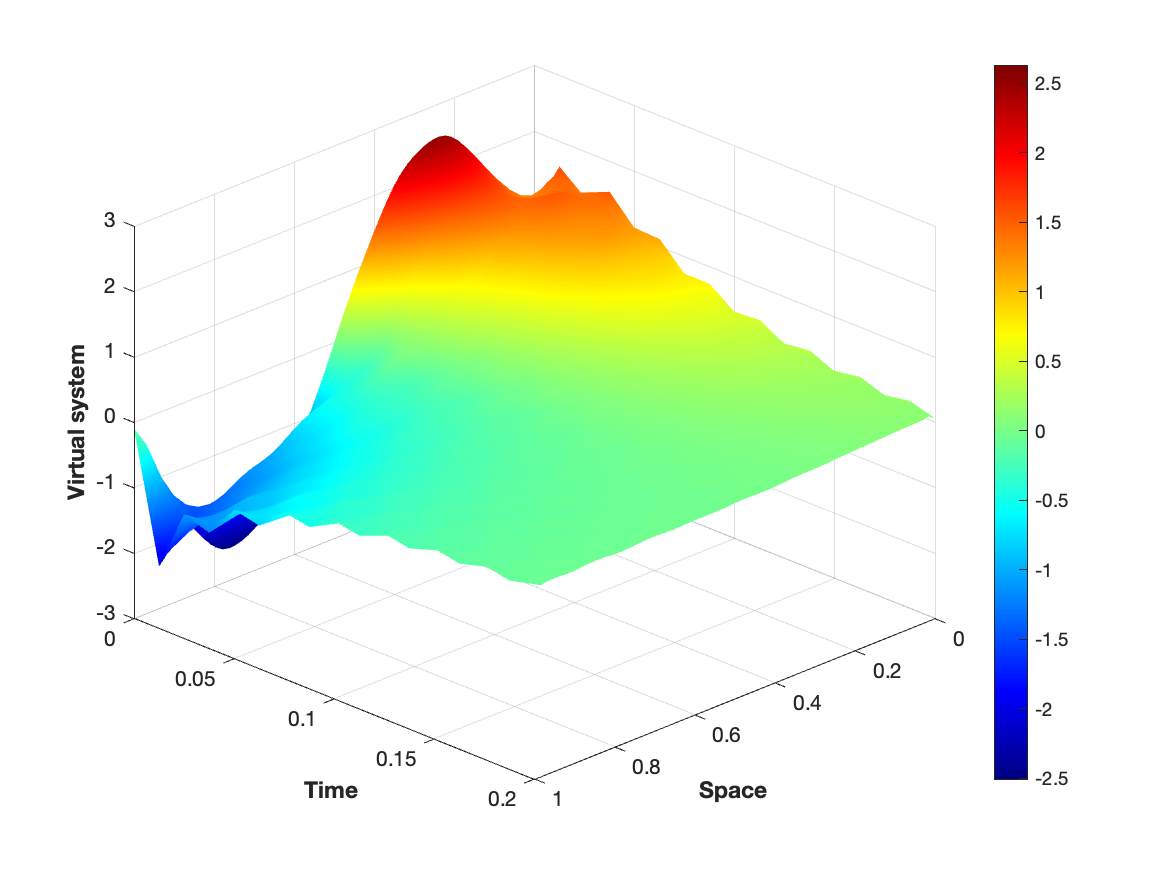}
        \caption{Virtual system}
        \label{fig:heat_VS}
    \end{subfigure}

    \caption{Results for the semi-discretized diffusion-reaction equation.}
    \label{fig:heat_results}
\end{figure}

\section{Outlook}\label{sec:outlook}

We provided a digital twin framework involving a virtual and a physical twin and a bidirectional coupling between them. As a prototypical design objective we chose the stabilization of the physical twin.  Mathematically, this involves simultaneous Bayesian inference, (optimal) feedback control, and state estimation. A significant difficulty arises due to the necessity of parameter updating, which consistently affects and changes  the optimal feedback structure of the physical twin and the system operator of the  virtual twin. 
 
We identify several open challenges for possible further research. A convergence analysis of  the parameter update procedure detailed in  \Cref{sec:param} is a challenging future goal. Similarly, for the estimation procedure, a finite set of data is utilized for each fixed control strategy. An asymptotic analysis with respect to the cardinality of the data set could be of interest, in the presence of updated controls. 
An efficient and constructive strategy for sensor placement, in relation to dynamical systems properties, for example, as well as the parameter update frequency, deserves deep investigation.

\section*{Acknowledgements}
Part of this research was stimulated by the conducive atmosphere at the Institute for Mathematical and Statistical Innovation (IMSI), during workshops within the special semester on Digital Twins in the fall of 2025.\\
This work was initiated when S.R. was with RICAM, Austrian Academy of Sciences,
part of his work is funded by the FCT -- Portuguese Foundation for Science and Technology, I.P., under the scope of the projects UID/00297/2025~(\href{https://doi.org/10.54499/UID/00297/2025}{doi: 10.54499/UID/00297/2025}) and UID/PRR/00297/2025~(\href{https://doi.org/10.54499/UID/PRR/00297/2025}{doi: 10.54499/UID/PRR/00297/2025}) (Center for Mathematics and Applications -- NOVA Math).

\bibliographystyle{abbrvurl}
\bibliography{references}

\end{document}

%% file: Mathcommands.tex
\newcommand{\rest}{\left.\kern-2\nulldelimiterspace\right|_}

\newcommand{\dnorm}[2]{\left\|#1\right\|_{#2}}

\newcommand{\Id}{{\mathbf1}}

\newcommand{\e}{\varepsilon}

\newcommand{\clA}{{\mathcal A}}
\newcommand{\clB}{{\mathcal B}}
\newcommand{\clC}{{\mathcal C}}

\newcommand{\clI}{{\mathcal I}}

\newcommand{\clK}{{\mathcal K}}

\newcommand{\clN}{{\mathcal N}}
\newcommand{\clO}{{\mathcal O}}

\newcommand{\bbN}{{\mathbb N}}

\newcommand{\bbR}{{\mathbb R}}

\newcommand{\rmd}{{\mathrm d}}

\definecolor{DarkBlue}{rgb}{0,0.08,0.45}
\definecolor{DarkRed}{rgb}{.65,0,0}
\definecolor{applegreen}{rgb}{0.55, 0.71, 0.0}

\newcounter{mymac@matlab}
\newcommand{\matlab}{MATLAB%
   \ifnum\value{mymac@matlab}<1%
   \textregistered%
   \setcounter{mymac@matlab}{1}%
   \fi%
  }


%% file: figure1.tex
\bigskip
\begin{minipage}[t]{0.43\textwidth}
    \begin{center}
        \begin{tikzpicture}[scale=0.5, every node/.style={scale=.55}, font=\sffamily, align=center]
            \node[draw, very thick, fill=blue!10, rounded corners=5pt, font=\huge,
            minimum width=4cm, minimum height=1.4cm, align=center] (phys)
            {\textbf{Physical System}};
        \end{tikzpicture}
    \end{center}
    \vskip 15pt
    \textbf{Real-world} physical phenomenon evolves as
        \begin{equation*}
            \dot y(t)=A_{\textcolor{red!70!black}{\sigma}} y(t) +B\textcolor{green!50!black}{u(t)},
        \end{equation*}
    for time~$t>0$, with \textbf{unknown initial state}~$y(0) = y_0 \sim \mathcal{N}(m_0,\mathcal{C}_0)$
    and \textcolor{red!70!black}{unknown parameter $\textcolor{red!70!black}{\sigma}$.}
\end{minipage}
\hskip 10pt%
\begin{minipage}[t]{0.43\textwidth}
    \begin{center}
        \begin{tikzpicture}[remember picture,scale=0.5, every node/.style={scale=.55}, font=\sffamily, align=center]
            \node[draw, very thick, blue!70!black, rounded corners=5pt, font=\huge,
            minimum width=4cm, minimum height=1.4cm, align=center] (meas)
            {\textbf{Measurements}};

            \coordinate (meas-east) at (meas.east);
        \end{tikzpicture}
    \end{center}
    \vskip 15pt
    Take~$s$ discrete perturbed observations of the state as
        \begin{equation*}
        \textcolor{blue}{z(t_j)}=Cy(t_j) + \eta_j,\qquad 1\le j\le s,
    \end{equation*}
    where~$\eta_j\sim \mathcal{N}(0,\Gamma_j)$ are independent.
\end{minipage}

\begin{tikzpicture}[very thick, blue!70!black]
  \draw[->]
    ([yshift=5pt]phys.east)
    .. controls +(1.1,0.) and +(-1.1,0.) ..
    node[above, yshift=2pt]{}
    ([yshift=5pt]meas.west);
\end{tikzpicture}


\bigskip\bigskip\bigskip\bigskip
\hspace{-3cm}
    \begin{tikzpicture}[scale=0.5, every node/.style={scale=.55}, font=\sffamily, align=center]
    \node[draw, very thick, fill=black!10, rounded corners=5pt, font=\huge,
    minimum width=4cm, minimum height=1.4cm, align=center] (goal)
    {\textbf{Goal:} stabilize the physical system\\$y(t) \to 0$};
    \end{tikzpicture}
\bigskip\bigskip\bigskip

\begin{tikzpicture}[remember picture,overlay]
    \coordinate (midpoint) at ($(meas.east)+(-24mm,-30mm)$);  \node at (midpoint) [xshift=18mm,yshift=-14mm,inner sep=0pt] {\begin{tikzpicture}
  \begin{axis}[
    width=4.2cm,
    height=3cm,
    hide axis,              
    clip=false,             
    samples=200,
    every axis plot/.append style={thick},
  ]
    \addplot[red!70!black, domain=0:10]
      {exp(-(x-3)^2/2)/sqrt(2*pi)};

    \addplot[magenta!100!black, domain=0.01:10]
      {1/(x*0.6*sqrt(2*pi))*exp(-((ln(x)-1)^2)/(2*0.6^2))};
  \end{axis}
\end{tikzpicture}
};
  \node at ($(midpoint)+( +20mm,-18mm)$) [font=\normalsize] {estimate unknown parameter $\textcolor{magenta}{\widehat{\sigma}}$};
\end{tikzpicture}


\noindent
\begin{minipage}[t]{0.43\textwidth}
    \begin{center}
        \begin{tikzpicture}[scale=0.5, every node/.style={scale=.55}, font=\sffamily, align=center]
            \node[draw, very thick, green!50!black, rounded corners=5pt, font=\huge,
            minimum width=4cm, minimum height=1.4cm, align=center] (control)
            {\textbf{Control input}};
        \end{tikzpicture}
    \end{center}
    \vskip 15pt
    The \textbf{control input} for the physical system
        \begin{equation*}
            \textcolor{green!50!black}{u(t)}=K_{\textcolor{magenta}{\widehat{\sigma}}} \widehat{y}(t),
        \end{equation*}
    is designed based on the virtual system.
\end{minipage}
\hskip 5pt
\begin{minipage}[t]{0.43\textwidth}
    \begin{center}
        \begin{tikzpicture}[remember picture,scale=0.5, every node/.style={scale=.55}, font=\sffamily, align=center]
            \node[draw, very thick, font=\huge, fill=green!10, rounded corners=5pt,
            minimum width=4.6cm, minimum height=1.4cm, align=center,
            right=3.85cm of control] (dig) {\textbf{Virtual System}};
              
            \coordinate (dig-east)  at (dig.east);
        \end{tikzpicture}
    \end{center}
    \vskip 15pt
    \textbf{Mirrors} the \textbf{physics} and continuously corrects itself 
    \begin{equation*}
        \dot{\widehat y}(t) = A_{\textcolor{magenta}{\widehat{\sigma}}} \widehat y(t)
        + B K_{\textcolor{magenta}{\widehat{\sigma}}} \widehat y(t)
        + L_{\textcolor{magenta}{\widehat{\sigma}}} (C\widehat y(t) - \textcolor{blue}{z(t)}),
    \end{equation*}
    where~$K_{\textcolor{magenta}{\widehat{\sigma}}}$ and
    $L_{\textcolor{magenta}{\widehat{\sigma}}}$ are constructed with the estimate
    $\textcolor{magenta}{\widehat{\sigma}}$.
\end{minipage}


\begin{tikzpicture}[very thick, green!70!black]
  \draw[<-]
    ([yshift=0pt]control.east)
    .. controls +(1.1,-0.) and +(-1.1,-0.) ..
    ([yshift=0pt]dig.west);
\end{tikzpicture}

\begin{tikzpicture}[very thick, black!90]
  \draw[->]
    ([yshift=0pt]control.west)
    .. controls +(-3.1,0.) and +(-3.3,0.) ..
    ([xshift=-0pt,yshift=5pt]phys.west);
\end{tikzpicture}

\begin{tikzpicture}[very thick, magenta!100!black]
  \draw[->]
    ([yshift=5pt]meas.east)
    .. controls +(3.5,0.) and +(2.5,0.) ..
    ([xshift=-0pt,yshift=0pt]dig.east);
\end{tikzpicture}

\vspace{-1cm}